\documentclass[10pt, oneside]{amsart}

\usepackage{amsmath,amsthm,amsfonts}
\usepackage{mathtools}
\usepackage{amssymb}
\usepackage{mathrsfs}
\usepackage{enumitem}
\usepackage{tikz-cd}
\usepackage{dsfont}
\usepackage[english]{babel} 
\usepackage{fullpage}
\usepackage{afterpage}
\usepackage{hyperref}
\usepackage[T1]{fontenc}
\usepackage[utf8]{inputenc}
\usepackage{lmodern}
\usepackage{graphicx} 
\usepackage{multicol} 
\usepackage{tikz} 
\usepackage{caption}
\usepackage{pgfplots} 
\usepackage{wrapfig}
\usepackage{array}
\usepackage{stackrel}
\usepackage{extarrows}
\usepackage{pgfplots}
\usetikzlibrary{decorations.markings}
\usepackage{mathtools}

\newcommand{\Z}{\mathbb{Z}} 
\newcommand{\N}{\mathbb{N}} 

\newcommand{\im}{\operatorname{Im}}
\newcommand{\Ext}{\operatorname{Ext}}

\newcommand{\F}{\mathbb{F}}
\newcommand{\Q}{\mathbb{Q}}

\newcommand{\gr}{\operatorname{gr}} 

\newcommand{\lk}{\operatorname{lk}}

\newtheorem{theorem}{Theorem}[section]
\newtheorem{lemma}[theorem]{Lemma}
\newtheorem{corollary}[theorem]{Corollary}
\newtheorem{proposition}[theorem]{Proposition}
\theoremstyle{definition}
\newtheorem{definition}[theorem]{Definition}
\theoremstyle{remark}
\newtheorem{remark}[theorem]{Remark}

\theoremstyle{plain}
\newtheorem{teoInt}{Theorem}

\pgfplotsset{compat=1.18}

\title[From RAAGs to Artin groups]{Cohomology rings and $p$-local behavior of even Artin groups.}
\author{Marcos Escartín Ferrer}
\author{Giorgio Leoni}
\author{Conchita Martínez Pérez}
\date{today}

\subjclass[2020]{Primary 20J06, 20F36; Secondary 57M07, 55P20}

\keywords{Artin groups \and restricted Lie algebras \and cohomology rings}

\thanks{\noindent
The first and third authors are partially supported by the Spanish Government PID2021-126254NB-I00 and PID2024-155800NB-C32 and also by Departamento de Ciencia, Universidad y Sociedad del 
Conocimiento del Gobierno de Arag{\'o}n (grant code: E22-23R: ``{\'A}lgebra y Geometr{\'i}a'')}

\begin{document}

\begin{abstract} We generalize to certain families of even Artin groups several classical results on right-angled Artin groups. In particular, we compute the cohomology ring, describe the pro-$p$ completion, and determine the $p$-Zassenhaus restricted Lie algebra in the FC case. As a by-product, we prove a rigidity result that implies that if two even Artin groups of FC type are isomorphic, then for every prime $p$, the $p$-parts of their defining graphs are isomorphic.

\end{abstract}

\maketitle

\section{Introduction}\label{sec:introduction}

Right-angled Artin groups (RAAGs) form a well-known family of groups that can be defined in terms of a finite simplicial graph $\Gamma$ and interpolate between free groups and free abelian groups. In recent years, considerable effort has been devoted to understanding which properties of RAAGs extend to the much more mysterious class of Artin groups, also called sometimes Artin-Tits groups. These efforts have been partially successful, in the sense that one can often identify particular families of Artin groups, according to the property under consideration, to which certain results for RAAGs extend.

In this paper, we continue this line of research. We focus on cohomological properties, which are very well understood in the case of RAAGs. Bartholdi, Härer, and Schick give in \cite{BHS} a very nice and self-contained account of many of the properties of RAAGs that we consider here.

For example, if \(A_\Gamma\) is the RAAG associated with the defining graph \(\Gamma\) and \(F\) is a field, then the cohomology ring \(H^\bullet(A_\Gamma,F)\) admits an explicit description in terms of \(\Gamma\): it is the exterior \(F\)-algebra generated by the dual classes \(\gamma_a\) of the standard generators of \(A_\Gamma\) (that is, of the vertices of \(\Gamma\)), subject to the relations \(\gamma_a\gamma_b=0\) whenever \(\{a,b\}\) is a non-edge of \(\Gamma\). In particular, \(H^\bullet(A_\Gamma,F)\) is quadratic and, moreover, it is {\sl Koszul}. A quadratic graded \(F\)-algebra \(S\) is said to be Koszul if the Yoneda algebra \(\mathrm{Ext}_S(F,F)\) and \(S\) itself are {\sl quadratic duals} of each other.

In the case of \(H^\bullet(A_\Gamma,F)\), its quadratic dual is also isomorphic to another important \(F\)-algebra associated with \(A_\Gamma\): the universal enveloping algebra \(U_\Gamma\) of the Lie algebra \(\gr_\bullet(A_\Gamma)\otimes_\mathbb{Z} F\), where \(\gr_\bullet(A_\Gamma)\) denotes the {\sl descending central series Lie ring} of \(A_\Gamma\) (see Section~\ref{sec:central} for definitions).

This fact is closely related to the existence of a {\sl Magnus map}. The terminology comes from the classical result of Magnus stating that, if \(G\) is a free group, then there exists a faithful representation of \(G\) into the group of units of the ring of formal power series with integer coefficients. This was later extended to RAAGs by Droms in his PhD thesis \cite{Droms}; see also \cite{Wade}. Using further work of Duchamp and Krob, one deduces that the Lie ring \(\gr_\bullet(A_\Gamma)\) is torsion-free and admits the same presentation as the group \(A_\Gamma\), but in the category of Lie rings.

A natural question is to what extent this favorable behavior of RAAGs persists for arbitrary Artin groups. A first difficulty arises from the following fact: for RAAGs, there exists a particularly nice model for the classifying space, namely the Salvetti complex, which can be used to describe the cohomology ring. For arbitrary Artin groups, there is also a Salvetti complex, but in general it remains an open problem whether this complex is indeed a classifying space. This assertion is equivalent to the so-called {\sl \(K(\pi,1)\)-conjecture}, which is a central problem in the area.

We will use the following notation for Artin groups. As before, \(\Gamma\) will denote a finite simplicial graph, but now we assume that the edges of \(\Gamma\) are labeled by integers greater than or equal to \(2\), encoding the Artin presentation (see Section~\ref{sec:Artin} for details). The associated group will be denoted by \(A_\Gamma\).

Even if a given Artin group is assumed to satisfy the \(K(\pi,1)\)-conjecture, the computation of its cohomology ring can be rather complicated. To illustrate this, see \cite{Landi 2000}, where Landi computes explicitly the cohomology rings of certain spherical Artin groups using a chain complex constructed by Salvetti \cite{Salvetti 1994}, whose cohomology computes the cohomology of any Artin group satisfying the \(K(\pi,1)\)-conjecture.

However, we show that, if one restricts to {\sl even} Artin groups, that is, Artin groups \(A_\Gamma\) such that all labels in \(\Gamma\) are even integers, then the cohomology ring admits a much simpler description and, in fact, resembles the cohomology ring of a RAAG. More precisely, we prove the following.

\begin{teoInt} \label{teo:CohomRing}
{\sl Let \(A_\Gamma\) be an even Artin group and \(R\) a unital commutative ring. Fix an order \(<\) on the set of vertices \(V(\Gamma)\). Then \(H^\bullet(A_\Gamma,R)\) is the exterior graded ring generated by the degree \(1\) elements \(\sigma_v\) for \(v\in V(\Gamma)\) and the degree \(2\) elements \(\sigma_e\) for edges \(e=\{u,v\}\in E(\Gamma)\) with label greater than \(2\), subject to the following relations:
\begin{itemize}
\item[(1)] \(\sigma_u\sigma_v=m\sigma_e\) if \(e=\{u,v\}\) has label \(2m\) and \(u<v\),
\item[(2)] \(\sigma_{X_1}\sigma_{X_2}=0\) if \(X_1\cap X_2\neq\emptyset\) or \(X_1\cup X_2\) is not spherical.
\end{itemize}}
\end{teoInt}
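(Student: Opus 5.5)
The plan is to compute $H^\bullet(A_\Gamma,R)$ cellularly from the Salvetti complex $\mathrm{Sal}_\Gamma$. Its cells are indexed by the spherical subsets $X\subseteq V(\Gamma)$, with $\dim X=|X|$. For the computation to be valid in all degrees I need $\mathrm{Sal}_\Gamma$ to be a $K(A_\Gamma,1)$, that is, the $K(\pi,1)$-conjecture for $A_\Gamma$. I do not see how to get this for free for arbitrary even $\Gamma$. I would first check whether the paper establishes or cites it for the class of even groups in question (it is known in the FC case, for instance). If it is not available, the argument below computes the cohomology ring of $\mathrm{Sal}_\Gamma$, which agrees with $H^\bullet(A_\Gamma,R)$ only in degrees $\le 2$.

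\textbf{Additive structure.} In Salvetti's cochain complex with trivial coefficients, the coboundary from the cell $X$ to the cell $X\cup\{v\}$ is a signed alternating sum over minimal coset representatives of $W_{X\cup\{v\}}$ modulo $W_X$. I would show this coefficient vanishes whenever all labels are even. In the rank-two case this amounts to the relator $(uv)^m(vu)^{-m}$ having zero exponent sum in each generator. In general the coefficient is the ratio of Poincaré series $W_{X\cup\{v\}}(-1)/W_X(-1)$. Each spherical even subgroup is a product of copies of $\mathbb Z/2$ and of dihedral groups $I_2(2m)$, and these factors contribute vanishing factors at $q=-1$. Hence all differentials are zero, and $H^k(A_\Gamma,R)$ is free on the duals $\sigma_X$ of spherical $X$ with $|X|=k$.

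\textbf{Structure of spherical subsets.} I record the combinatorial fact that an even spherical subset $X$ is a disjoint union of single vertices and edges with label $>2$. Pairs in different pieces are joined by edges labelled $2$. The reason is that an irreducible spherical Coxeter graph of rank $\ge3$ needs a label $3$. So $A_X\cong\mathbb Z^{a}\times\prod_i A_{I_2(2m_i)}$. The generator $\sigma_X$ is then a product of $\sigma_v$ over the single vertices of $X$ and $\sigma_e$ over its edges, and the ring is generated by the $\sigma_v$ and the $\sigma_e$.

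\textbf{Products and relations.} I would use a cellular diagonal on $\mathrm{Sal}_\Gamma$ that is compatible with the inclusions $\mathrm{Sal}_T\subseteq \mathrm{Sal}_\Gamma$ for $T\subseteq V(\Gamma)$. With such a diagonal, $\sigma_{X_1}\smile\sigma_{X_2}$ is supported on the cell $X_1\cup X_2$. It therefore vanishes when $X_1\cup X_2$ is not spherical, and also when $X_1\cap X_2\neq\emptyset$ for degree reasons. This gives relation (2). Since the restriction maps to the $A_T$ with $T$ spherical maximal are jointly injective (each basis element lives on some such $T$), all remaining products can be computed inside $A_T$. There the Künneth formula for the product decomposition above reduces everything to two computations:
\begin{itemize}
\item for $\mathbb Z$, the class $\sigma_v$ squares to zero;
\item for the dihedral group $\langle u,v\mid (uv)^m=(vu)^m\rangle$, whose presentation $2$-complex is aspherical, one must show $\sigma_u\smile\sigma_v=m\,\sigma_e$ for $u<v$.
\end{itemize}
For the dihedral case I evaluate the cup product on the $2$-cell via the standard formula for one-relator complexes: sum, over pairs of letters of the relator in order, the products of exponents. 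For $(uv)^m(vu)^{-m}$ this gives $m$ up to the sign fixed by the orientation convention, which I choose so that it matches the order $u<v$.

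\textbf{Main obstacle.} The hard step is the existence and naturality of the diagonal, specifically that products are supported on the union cell. Beyond that, the $K(\pi,1)$ issue above decides whether the result holds in all degrees or only through degree $2$.
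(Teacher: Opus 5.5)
The additive part of your plan matches the paper: Salvetti's cochain complex, with all coboundaries vanishing. One small correction there: $W_X(-1)=0$ for $X\neq\emptyset$, so your coefficient must be read as the polynomial $W_{X\cup\{v\}}(q)/W_X(q)$ evaluated at $q=-1$. The $K(\pi,1)$ worry is settled by citation: the paper uses the known fact (citing Charney) that even Artin groups satisfy the $K(\pi,1)$-conjecture, so the computation is valid in all degrees.

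The problems are in the multiplicative part.

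\textbf{The overlapping case of relation (2).} This does not follow ``for degree reasons'' from a diagonal that merely preserves the subcomplexes $\mathrm{Sal}_T$. Such a diagonal only shows that $\sigma_{X_1}\smile\sigma_{X_2}$ is supported on cells $Z\supseteq X_1\cup X_2$ with $|Z|=|X_1|+|X_2|$. When $X_1\cap X_2\neq\emptyset$, these $Z$ strictly contain $X_1\cup X_2$ and may be spherical. So a priori $\sigma_u\smile\sigma_u$ could be a nonzero multiple of $\sigma_{\{u,v\}}$. Graded commutativity only gives $2\sigma_u^2=0$, which says nothing over $\mathbb{F}_2$. Your list of dihedral computations also omits $\sigma_u^2=0$ in $H^2(A_{\{u,v\}},R)$.

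The missing idea is the one the paper uses, and it is exactly where evenness enters. For $X=X_1\cup X_2$, the map $r\colon A_\Gamma\to A_X$ killing the vertices outside $X$ is a retraction. Hence $r^*\colon H^\bullet(A_X,R)\to H^\bullet(A_\Gamma,R)$ is an injective ring map, and $\sigma_{X_i}=r^*(\sigma_{X_i})$. Now $H^k(A_X,R)=0$ for $k>|X|$, and $H^{|X|}(A_X,R)$ is $0$ or $R\sigma_X$ according as $X$ is non-spherical or spherical. So both halves of (2) follow at once. What remains is the dihedral identity $\sigma_u\sigma_v=m\sigma_e$ plus Künneth for commuting pieces. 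No cellular diagonal is needed, so your ``main obstacle'' disappears.

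\textbf{Completeness of the relations.} You never show that (1) and (2) are \emph{all} the relations. Your plan yields that the $\sigma_v,\sigma_e$ generate and satisfy (1) and (2). That is, it gives a surjection from the ring $S$ defined by the presentation (write $\tilde\sigma$ for its generators) onto $H^\bullet(A_\Gamma,R)$.

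For injectivity you must show the following: in $S$, every monomial in the generators is either $0$ or $\pm$ a scalar multiple of one fixed monomial $\tilde\sigma_X$ attached to a spherical $X$ of the same degree. Then $S_k$ is spanned by elements mapping onto the free basis $\{\sigma_X\}_{|X|=k}$ of $H^k$, and the surjection is an isomorphism.

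This is not automatic, because (2) only constrains pairs of generators. Suppose $u,v,w$ span a triangle with two labels $>2$. Each pair is an edge, so (2) does not directly kill $\tilde\sigma_u\tilde\sigma_v\tilde\sigma_w$. One must first use (1) to write $\tilde\sigma_u\tilde\sigma_v=\pm m\,\tilde\sigma_{\{u,v\}}$, choosing an edge $\{u,v\}$ of label $2m>2$. Only then can (2) be applied to $\tilde\sigma_{\{u,v\}}\tilde\sigma_w$.

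The paper carries out this case analysis: union not complete, overlapping pieces, and non-spherical triangle. Your proposal has no counterpart to it.
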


A key property shared by even Artin groups and RAAGs, and which plays a central role in the proof, is that for every subgraph \(\Delta\subseteq \Gamma\), the group \(A_\Delta\) is a retract of \(A_\Gamma\). In Section~\ref{sec:Isomorphism}, we give an example of two non-isomorphic even Artin groups having isomorphic cohomology rings; to prove that these groups are non-isomorphic, we use \(\Sigma\)-invariants (also called BNRS-invariants).

A first consequence of Theorem~\ref{teo:CohomRing} is that, for even Artin groups, the cohomology ring may or may not be generated by degree one elements, depending on the coefficient ring. Another difference from the case of RAAGs appears when one considers descending central series Lie rings: for arbitrary Artin groups these are no longer torsion-free and seem difficult to describe.

Things simplify when working with coefficients in a field of characteristic zero, say \(\mathbb{Q}\), since in that case even Artin groups behave very much like RAAGs. However, this is not very interesting, because both the cohomology ring \(H^\bullet(A_\Gamma,\mathbb{Q})\) and the rational descending central series Lie algebra \(\gr_\bullet(A_\Gamma)\otimes_\mathbb{Z}\mathbb{Q}\) coincide with the corresponding objects for a RAAG that can be naturally associated to our Artin group: namely, the quotient of \(A_\Gamma\) obtained from the same defining graph by replacing all labels by \(2\) (see Section~\ref{sec:Artin} for details).

For this reason, throughout most of the paper we work with coefficients in a field \(F\) of prime characteristic \(p\) and study the groups from a \(p\)-local point of view. From this perspective, many of the properties we consider for a group \(G\) can be interpreted as properties of its pro-\(p\) completion
\[
\widehat{G}=\varprojlim_{U\leq_p G} G/U,
\]
where \(U\leq_p G\) means that \(U\) is a subgroup of finite \(p\)-power index in \(G\). In the case of Artin groups, we show that the pro-\(p\) completion admits the following description.


\begin{teoInt}\label{teo:prop}
{\sl Let $A_\Gamma$ be an Artin group and $p$ a prime. Then the pro-$p$ completion of $A_\Gamma$ is the group given by the same presentation as $A_{\Gamma_p}$, but in the category of pro-$p$ groups.}
\end{teoInt}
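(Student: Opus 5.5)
The plan is to reduce the statement to a comparison of finite $p$-quotients, and then to check the defining relations edge by edge. The general fact I would invoke first is standard. If a discrete group $G$ has the finite presentation $\langle X\mid R\rangle$, then its pro-$p$ completion $\widehat G$ is the pro-$p$ group presented by $\langle X\mid R\rangle$ in the category of pro-$p$ groups. Indeed, both objects have the same continuous homomorphisms to finite $p$-groups. Consequently, for two finite presentations $\langle V\mid R\rangle$ and $\langle V\mid R'\rangle$ on the same generating set $V=V(\Gamma)$, the two pro-$p$ presentations define the same pro-$p$ group as soon as the following holds: for every finite $p$-group $P$ and every map $f\colon V\to P$, the relations $R$ hold for $f$ if and only if the relations $R'$ hold. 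So it suffices to prove this equivalence for $R$ the Artin relations of $\Gamma$ and $R'$ those of $\Gamma_p$. The equivalence can be checked one edge at a time.

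The key elementary lemma I would prove is this. In a finite $p$-group $P$, if $q$ is coprime to $p$, then the map $x\mapsto x^q$ is a bijection, since $q$ is invertible modulo the exponent of $P$. Hence $x^q=y^q$ implies $x=y$.

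Now consider an even edge $\{a,b\}$ with label $2k$, and write $k=p^e q$ with $p\nmid q$. The Artin relation $(ab)^k=(ba)^k$ is $\bigl((ab)^{p^e}\bigr)^q=\bigl((ba)^{p^e}\bigr)^q$. By the lemma this is equivalent, in any finite $p$-group, to $(ab)^{p^e}=(ba)^{p^e}$, which is exactly the relation for the label $2p^e$. If $e=0$, the relation reduces to $ab=ba$, which is the commuting label $2$. This matches my expectation of how $\Gamma_p$ is built, namely with each even label $2k$ replaced by $2p^{v_p(k)}$.

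For an odd label $m$ the relation reads $\operatorname{prod}(a,b;m)=\operatorname{prod}(b,a;m)$. I would rewrite it as $\Delta a\Delta^{-1}=b$, where $\Delta=\operatorname{prod}(a,b;m)$, and use that $\Delta^2=(ab)^m$ is central in the dihedral-type Artin group. I would then analyse, again with the coprime-power lemma, what this forces in a finite $p$-group: for instance, whether it forces $a=b$, or reduces to a relation of smaller length. The goal is to match the modification prescribed by the definition of $\Gamma_p$ in Section~\ref{sec:Artin}. If that modification identifies the two vertices, I would also check that the identification is compatible across different edges, so that the resulting presentation is literally the Artin presentation of $\Gamma_p$ on the quotient vertex set.

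I expect the odd-label edges to be the main obstacle, with the case $p=2$ treated separately. The even case is a direct application of the lemma, but the odd case needs an actual group-theoretic argument inside finite $p$-groups. There I would first try to show that the images $\bar a,\bar b$ in the abelianization coincide, and then lift this equality up the lower $p$-central series by induction on the nilpotency class.
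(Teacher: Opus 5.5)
Your reduction to comparing the two relation sets inside finite $p$-groups is sound. Your even-label step (the $q$-th power map is a bijection of a finite $p$-group when $p\nmid q$) is exactly the paper's argument. The paper phrases it as $(ab)^{p^t}(ba)^{-p^t}\in\gamma_i^{[p]}(A_\Gamma)$ for every $i$, using $ks+lp^r=1$ in the finite $p$-group $A_\Gamma/\gamma_i^{[p]}(A_\Gamma)$.

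The gap is the odd-label case, which you leave unresolved. You do not decide whether the relation forces $a=b$ or ``reduces to a relation of smaller length'', and you expect $p=2$ to need separate treatment. Without this step you have not shown that a map $f\colon V(\Gamma)\to P$ satisfying the Artin relations is constant on the classes of vertices joined by odd edges, i.e.\ that it descends to $V(\Gamma_p)$ at all. Your $\Delta$-route does work for odd $p$: there $f(\Delta)$ has odd order, hence is a power of $f(\Delta)^2$, hence commutes with $f(a)$, so $f(b)=f(a)$. But it fails for $p=2$, and your closing idea of ``lifting up the lower $p$-central series'' does not supply the reason such a lift exists.

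That reason is the following. For odd $m$ the relation $\operatorname{prod}(a,b;m)=\operatorname{prod}(b,a;m)$ abelianizes to $a=b$, so $A_e^{ab}\cong\Z$ and $ab^{-1}\in\gamma_2(A_e)$. Since $\gamma_2(A_e)/\gamma_3(A_e)$ is a quotient of $\Lambda^2(A_e^{ab})=0$, one gets $\gamma_2(A_e)=\gamma_r(A_e)$ for all $r\ge 2$; the paper quotes this as Proposition~1 of Bellingeri--Gervais--Guaschi. Hence $ab^{-1}$ is trivial in every nilpotent quotient of $A_e$, so $f(a)=f(b)$ in every finite $p$-group, uniformly in $p$ and including $p=2$. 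Conversely, $f(a)=f(b)$ makes the odd relation trivial. So the odd case is the easy one, no shorter relation ever arises, and no case split on $p$ is needed.

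Finally, once vertices are identified you must also account for step (3) in the definition of $\Gamma_p$, which your proposal does not mention. Even edges inside a class give trivial relations. Among parallel edges between two classes only the smallest label matters, because $(xy)^{p^s}=(yx)^{p^s}$ implies $(xy)^{p^t}=(yx)^{p^t}$ for $t\ge s$.
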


The graph $\Gamma_p$ is defined in Section~\ref{sec:central}; it is essentially the {\sl even $p$-part} of $\Gamma$, so the Artin group $A_{\Gamma_p}$ is always even.

Typically, there are two $p$-central series that one can associate to a finitely generated group $G$: one was first defined by Stallings and is sometimes called the Stallings $p$-central series, while the other, which is the central series we consider here, was first defined by Zassenhaus and is called the $p$-Zassenhaus central series, denoted $\gamma_i^{[p]}(G)$. One can associate Lie algebras to these central series; in the case of the $p$-Zassenhaus central series we obtain a $p$-restricted Lie algebra, denoted $\mathrm{gr}_\bullet^{[p]}(G)$.

A discrete group and its pro-$p$ completion have the same $p$-Zassenhaus restricted Lie algebra. In this context, we say that an Artin group $A_\Gamma$ is pro-$p$ FC if its pro-$p$ completion is of type FC, meaning that the group $A_{\Gamma_p}$ is FC. For these groups we give the following presentation of the $p$-Zassenhaus restricted Lie algebra.

\begin{teoInt}[Restricted $p$-algebras of Artin groups]
\label{teo:prestricted}
{\sl Let $p$ be a prime and $A_\Gamma$ a pro-$p$ FC Artin group. Then the $p$-Zassenhaus Lie algebra of $A_\Gamma$ is the $p$-restricted Lie algebra generated by $V(\Gamma)$ subject to the following relations:
\begin{itemize}
\item[1)] $\mathfrak{a} = \mathfrak{b}$ if $\{\mathfrak{a},\mathfrak{b}\}$ forms an edge of $\Gamma$ with odd label,

\item[2)] $[(\mathfrak{a}+\mathfrak{b})^{[p]^t}, \mathfrak{b}] = 0$ if $\{\mathfrak{a},\mathfrak{b}\}$ forms an edge of $\Gamma$ with even label $2kp^t$, where $p \nmid k$.
\end{itemize}}
\end{teoInt}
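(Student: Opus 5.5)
The plan is to reduce to the even graph $\Gamma_p$ and then compute the restricted Lie algebra of an even FC Artin group through a Magnus-type embedding into a restricted enveloping algebra. Since a group and its pro-$p$ completion have the same $p$-Zassenhaus restricted Lie algebra, Theorem~\ref{teo:prop} gives $\gr^{[p]}_\bullet(A_\Gamma)\cong\gr^{[p]}_\bullet(A_{\Gamma_p})$. So it suffices to treat the even FC Artin group $A_{\Gamma_p}$, provided we also check that the presentation in the statement reduces correctly.

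For that check, relation 1) identifies the generators along odd edges, which is exactly how $\Gamma_p$ is built: the vertices of $\Gamma_p$ are classes of vertices joined by odd paths. Even edges whose label $2k$ has $p\nmid k$, that is $t=0$, should give $[\mathfrak a+\mathfrak b,\mathfrak b]=[\mathfrak a,\mathfrak b]=0$, a commuting relation. I expect this to match how the label $2kp^t$ is replaced by $2p^t$ in $\Gamma_p$. So the statement reduces to an even graph with labels $2p^t$ and relations $[(\mathfrak a+\mathfrak b)^{[p]^t},\mathfrak b]=0$.

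Let $L$ be the restricted Lie algebra given by this presentation. There is a natural surjection $L\to\gr^{[p]}_\bullet(A)$ sending $\mathfrak a$ to the class of $a$, and the first step is to show it is well defined. Write $a=1+X_a$ in $\F_p[A]$; the Zassenhaus filtration is induced by the augmentation ideal filtration (Jennings/Lazard), and $\gr^{[p]}_\bullet(A)$ embeds in the restricted Lie algebra of the graded algebra $\gr\F_p[A]$. The edge relation $(ab)^{p^t}=(ba)^{p^t}$ can be rewritten as $[(ab)^{p^t},b]=1$ up to conjugation. Modulo higher filtration, $(ab)^{p^t}$ has leading term $(X_a+X_b)^{p^t}$ in degree $p^t$, which gives the relation in degree $p^t+1$. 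Injectivity is the real content.

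For injectivity I would build a Magnus map. Let $U=u^{[p]}(L)$ be the restricted enveloping algebra, completed with respect to degree, and define $\mu\colon A\to \widehat U^\times$ by $a\mapsto 1+\mathfrak a$. The relation holds exactly, not just to leading order, because of a key identity: in characteristic $p$, $(1+x)(1+y)$ raised to $p^t$ equals $1+(x+y+xy)^{p^t}$. One then has to show that $[(x+y+xy)^{p^t},1+y]=0$ follows from the relation in $L$. That this identity comes out exactly is the main obstacle, so I would not try to verify it naively. Instead I would use the substitution $a\mapsto(1+\mathfrak a)$ together with the FC hypothesis: on a spherical two-generator subgroup $A_{\{a,b\}}$ with label $2p^t$, the element $(ab)^{p^t}$ is central. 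Its image $(1+\mathfrak a)(1+\mathfrak b)$ raised to $p^t$ equals $1+w^{p^t}$ with $w=\mathfrak a+\mathfrak b+\mathfrak a\mathfrak b$, and the relation $[w^{p^t},\mathfrak b]=0$ needs $[w,\mathfrak b]$-type control. If the exact identity fails, I would modify the map to $a\mapsto\exp$-free choices, for instance choosing images of generators so that $ab\mapsto 1+(\mathfrak a+\mathfrak b)$ up to terms commuting with $\mathfrak b$.

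Once $\mu$ exists, it sends the $n$-th Zassenhaus term into $1+\widehat U_{\ge n}$. The induced map $\gr^{[p]}_\bullet(A)\to U$ composed with $L\to\gr^{[p]}_\bullet(A)$ is the canonical map $L\to U$, which is injective by restricted PBW. Hence $L\cong\gr^{[p]}_\bullet(A)$. The FC assumption is used here to guarantee that $U$ has the expected PBW basis and Hilbert series matching $\Ext$ of Theorem~\ref{teo:CohomRing} (Koszulity-type counting). Alternatively, it lets one argue by induction on amalgamated products over spherical subgraphs, where both sides decompose compatibly.
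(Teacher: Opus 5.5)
\textbf{There is a genuine gap: your argument only covers spherical $\Gamma_p$.}

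What works: your reduction to $\Gamma_p$, the surjection $L\to\gr^{[p]}_\bullet(A_{\Gamma_p})$, and the injectivity principle are sound. (The principle is that a Magnus map $\mu$ with $\mu(v)\equiv 1+\mathfrak v$ modulo degree $\geq 2$ makes $L\to\gr^{[p]}_\bullet(A_{\Gamma_p})\to u(L)$ the canonical map, which is injective by restricted PBW.) For a single edge your repair also works. In the generators $a$ and $s=ab$, the map $a\mapsto 1+\mathfrak a$, $s\mapsto 1+\mathfrak s$ with $\mathfrak s=\mathfrak a+\mathfrak b$ is well defined, because $(1+\mathfrak s)^{p^t}=1+\mathfrak s^{p^t}$ commutes with $\mathfrak a$. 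This is the map of Proposition~\ref{FMagnusArtin}, and it gives a shorter proof of Proposition~\ref{prop:pZdihedral} than the paper's central-extension argument. Tensoring such maps handles spherical $\Gamma_p$.

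\textbf{The Magnus map does not globalize.} Everything beyond the spherical case is where the theorem has content, and your repair is edge-dependent. It forces, say, $\mu(b)=(1+\mathfrak a)^{-1}(1+\mathfrak a+\mathfrak b)$, which involves $\mathfrak a$. Take the FC graph with edges $\{c,a\}$, $\{a,b\}$, $\{b,d\}$, where $\{a,b\}$ has label $2p^t$ with $t\geq 1$ and the other two edges have label $2$.
\begin{itemize}
\item Then $\mu(b)$ does not commute with $\mu(d)=1+\mathfrak d$. The degree-$3$ part of the commutator is $-[\mathfrak a,\mathfrak d]\mathfrak b$, which is nonzero in $u(L)$: send $\mathfrak b$ to a central variable and $\mathfrak c$ to $0$.
\item Twisting $\mu(a)$ instead breaks the relation with $c$ in the same way.
\item Nor can each $\mu(v)$ be a power series in $\mathfrak v$ alone. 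Then $\mu(a)\mu(b)$ has no $\mathfrak b\mathfrak a$ term in degree $2$, so it is not a function of $\mathfrak a+\mathfrak b$, and that is what made the single-edge relation hold.
\end{itemize}
You give no global construction. In the paper the global Magnus map is derived from Theorem~\ref{teo:prestricted}, not the other way round.

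\textbf{Your fallbacks do not close the gap.} Restricted PBW holds for every restricted Lie algebra, so FC plays no role there. There is also no Koszul-type duality between $H^\bullet(A_\Gamma,\F_p)$ from Theorem~\ref{teo:CohomRing} and $u(L)$; the paper stresses that there is none. Matching Hilbert series would in any case require computing $\gr_\bullet\F_p[A_{\Gamma_p}]$, which is exactly what is at stake.

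The amalgam induction is the paper's route, but ``both sides decompose compatibly'' is precisely the nontrivial step on the group side. What you are missing is Theorem~\ref{teo:Giorgio}: $\gr^{[p]}_\bullet(G_1\ast_HG_2)=\gr^{[p]}_\bullet(G_1)\ast_{\gr^{[p]}_\bullet(H)}\gr^{[p]}_\bullet(G_2)$ whenever $H$ is strictly $p$-embedded in both factors. You combine it with the fact that special subgroups of even Artin groups are retracts, hence strictly $p$-embedded. The amalgamating subgraph need not be spherical: for non-adjacent $u,v$ one uses $\Gamma=(\Gamma\setminus\{u\})\cup(\Gamma\setminus\{v\})$. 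With that theorem, the presentation of $L$ visibly decomposes the same way, and induction from the spherical case finishes the proof.
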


From Theorems $\ref{teo:CohomRing}$ and $\ref{teo:prestricted}$ we see that we cannot expect any duality between the cohomology ring in characteristic $p$ of an Artin group and its $p$-Zassenhaus restricted Lie algebra. However, at least in the pro-$p$ FC case, we can extract a lot of cohomological information from these groups. To do so we introduce the following notion.

Let $U_G$ be the universal enveloping algebra of $\mathrm{gr}^{[p]}_\bullet(G)$, $\hat{U}_G$ the degree completion of $U_G$, and $\mathbb{F}_p[[G]]$ the completion of the group ring $\mathbb{F}_p G$ with respect to the powers of the augmentation ideal, where $\mathbb{F}_p$ is the field of $p$ elements. We say that a group $G$ is $p$-Magnus if there is a faithful representation
$G \to \hat{U}_G$
that extends to an isomorphism of filtered algebras
$\mathbb{F}_p[[G]] \to \hat{U}_G$.

If a group $G$ is $p$-Magnus, there is a May-type spectral sequence that collapses at the first page and allows one to compute the continuous cohomology groups $dH^\bullet(G,\mathbb{F}_p)$ in terms of the bigraded Ext functors $\mathrm{Ext}_{U_G}(\mathbb{F}_p,\mathbb{F}_p)$. We show in Theorem $\ref{teo:Giorgiofiltered}$ that being $p$-Magnus is preserved under free amalgamated products along retracts, and as a consequence we obtain:

\begin{teoInt}
\label{teo:filtered}
{\sl Let $p$ be a prime and $A_\Gamma$ a pro-$p$ FC Artin group. Then $A_{\Gamma_p}$ is residually-$p$, $p$-Magnus, and cohomologically $p$-complete.}
\end{teoInt}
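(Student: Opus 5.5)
The plan is induction on the number of vertices of $\Gamma_p$, exploiting the structure of FC-type Artin groups. $A_{\Gamma_p}$ is even and of FC type, so there are two cases. Either $\Gamma_p$ is complete, in which case $A_{\Gamma_p}$ is spherical. Or there are two non-adjacent vertices $a,b$, in which case
\[
A_{\Gamma_p}=A_{\Gamma_p\setminus\{a\}}\ast_{A_{\Gamma_p\setminus\{a,b\}}}A_{\Gamma_p\setminus\{b\}},
\]
which follows from the standard amalgam decomposition of Artin groups along the link. Since the Artin group is even, every $A_\Delta$ with $\Delta\subseteq\Gamma_p$ is a retract of $A_{\Gamma_p}$. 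In particular the amalgamated subgroup is a retract of both factors. Theorem~\ref{teo:Giorgiofiltered} says that being $p$-Magnus is preserved under amalgamation along retracts. I will prove, or invoke, analogous preservation statements for residual $p$-finiteness and for cohomological $p$-completeness. For residual-$p$: the retractions give compatible maps onto the factors, and one uses the Higman-type criterion for amalgams of residually-$p$ groups along a subgroup that is closed in the pro-$p$ topology and is a retract. For cohomological completeness: the Mayer--Vietoris sequences of the discrete and the pro-$p$ amalgam are compared via the five lemma. The pro-$p$ amalgam is proper because of the retractions, and Theorem~\ref{teo:prop} identifies the completion. Induction then reduces everything to the spherical even case.

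\textbf{Base case.} In the base case $\Gamma_p$ is complete with even labels. Spherical even Artin groups are direct products of rank-$\leq 2$ pieces: an even spherical Coxeter graph has every edge labelled $2$, except for isolated $I_2(2m)$ components. Hence $A_{\Gamma_p}$ is a direct product of copies of $\mathbb{Z}$ and dihedral Artin groups $A(2m)=\langle a,b\mid (ab)^m=(ba)^m\rangle$ with $m=kp^t$. The group $A(2m)$ is a central extension of a free product $\mathbb{Z}\ast\mathbb{Z}/m$ by $\mathbb{Z}$. Equivalently, with $x=ab$, it is $\langle x,b\mid [x^m,b]=1\rangle$, and because of the $p$-part of $\Gamma$ we may take $m=p^t$ (with $k$ invertible). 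This group is an HNN/amalgam $\langle x\rangle\ast_{\langle x^{p^t}\rangle}(\langle x^{p^t}\rangle\times\langle b\rangle)$, a one-relator group. For it I will verify the three properties directly:
\begin{itemize}
\item Residually $p$: map onto $\mathbb{Z}/p^N\ast_{\mathbb{Z}/p^{N-t}}(\mathbb{Z}/p^{N-t}\times\mathbb{Z}/p^N)$ for large $N$.
\item Cohomologically complete: compare the 2-dimensional Mayer--Vietoris sequences using Theorem~\ref{teo:CohomRing}.
\item $p$-Magnus: the Magnus map $x\mapsto 1+X$, $b\mapsto 1+B$, where $(1+X)^{p^t}=1+X^{p^t}$ commutes with $B$. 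This matches relation 2) of Theorem~\ref{teo:prestricted}, so the map into $\hat U_G$ is well defined. Faithfulness and the isomorphism $\mathbb{F}_p[[G]]\cong\hat U_G$ follow by checking that Hilbert series agree, the associated graded of $\mathbb{F}_p[[G]]$ being a quotient of $U_G$ by Quillen/Jennings.
\end{itemize}
All three properties are preserved under direct products, using the Künneth formula and $U_{G\times H}=U_G\otimes U_H$.

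\textbf{Main obstacle.} I expect the hardest step to be the cohomological $p$-completeness in the amalgam step. The difficulty is ensuring that the pro-$p$ completion of the amalgam is the proper pro-$p$ amalgam of the completions, with the completions of the factors injecting. I would handle this using the retractions, which pass to completions and force injectivity, combined with Theorem~\ref{teo:prop} to identify the completions. The $p$-Magnus part is then immediate from Theorem~\ref{teo:Giorgiofiltered}, once one checks that the retractions are compatible with the filtrations, which holds because the retractions send generators to generators or to $1$.
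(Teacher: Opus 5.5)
Your proposal is correct. Its skeleton matches the paper's: pass to $A_{\Gamma_p}$, split a non-complete graph as an amalgam of special subgroups along a common retract, and reduce to products of copies of $\Z$ and $\mathrm{BS}(p^t,p^t)$.

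The residual-$p$ part agrees with the paper in substance. You map onto the finite amalgams $\Z/p^N\ast_{\Z/p^{N-t}}(\Z/p^{N-t}\times\Z/p^N)$ where the paper cites Moldavanskii. The $p$-Magnus part also agrees: it is the same map $x\mapsto 1+X$, $b\mapsto 1+B$, whose associated graded inverts Quillen's map, followed by Theorem~\ref{teo:Giorgiofiltered}.

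The genuinely different step is the base case of cohomological $p$-completeness.
\begin{itemize}
\item The paper uses that $\mathrm{BS}(p^t,p^t)$ is $p$-Magnus, so the May spectral sequence collapses (Lemma~\ref{lem:collapse}). It computes $\Ext_U(F,F)$ from an explicit graded resolution of $U=F\langle a,s\mid as^{p^t}=s^{p^t}a\rangle$, finds dimensions $1,2,1$, matches this with Theorem~\ref{teo:CohomRing}, and asserts that the comparison map sends a basis to a basis.
\item You instead apply a Mayer--Vietoris/five-lemma comparison to the splitting $\langle x\rangle\ast_{\langle x^{p^t}\rangle}\langle x^{p^t},b\rangle$. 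This is the same mechanism you use in the inductive step, where the paper simply cites Theorem C of \cite{LMPW}.
\end{itemize}
Your route is more elementary and treats the base case and the induction uniformly. It also produces the isomorphism directly from the five lemma rather than from a dimension count. The paper's route is the one that shows what the $p$-Magnus property buys.

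Two remarks. First, in the base case properness of $\Z_p\amalg_{p^t\Z_p}\Z_p^2$ cannot come from retractions as you suggest, because $\langle x^{p^t},b\rangle$ is not a retract of $G$ when $t\geq 1$. Use instead the abelianization $G\to\Z^2$. It is injective on $\langle x^{p^t},b\rangle$ with image of index $p^t$, so $\widehat{\langle x^{p^t},b\rangle}\cong\Z_p^2$ injects into $\widehat{G}$. The factor $\langle x\rangle$ is a retract via $b\mapsto 1$.

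Second, you handle direct products explicitly via K\"unneth and $U_{G\times H}=U_G\otimes U_H$, which covers a step the paper passes over. Iterated amalgamation never produces complete graphs with more than one factor. Yet for the $p$-Magnus and $p$-completeness parts, the paper treats only a single dihedral group before iterating.
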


The notions of being residually-$p$ and cohomologically $p$-complete are standard and are explained in Sections~\ref{sec:res} and \ref{sec:complete}. This result generalizes the fact that RAAGs are cohomologically $p$-complete for any $p$ \cite[Theorem 2.6]{L}.

The fact that, for a pro-$p$ FC Artin group $A_\Gamma$, the group $A_{\Gamma_p}$ is residually-$p$ follows from  \cite{Moldavanskii} and \cite{BS}. In fact, these results imply that, for pro-$p$ FC Artin groups, $A_{\Gamma_p}$ is the largest quotient of $A_\Gamma$ that is residually-$p$. As a consequence, and using also the main result in \cite{BlascoParis}, we obtain the following theorem related to the rigidity problem for Artin groups.

\begin{teoInt}\label{teo:isoproblem}
{\sl Let $A_{\Gamma}$ and $A_\Delta$ be pro-$p$ FC Artin groups such that $A_{\Gamma}$ and $A_\Delta$ are isomorphic. Then there is a graph isomorphism $\Gamma_p \cong \Delta_p$.}
\end{teoInt}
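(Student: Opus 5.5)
The plan is to pass from the isomorphism $A_\Gamma\cong A_\Delta$ to an isomorphism of the largest residually-$p$ quotients, and then to apply the rigidity theorem of Blasco and Paris \cite{BlascoParis} to the resulting even Artin groups of FC type.

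\textbf{Step 1: identify the largest residually-$p$ quotient.} For any finitely generated group $G$, let $G_{(p)}=G/\bigcap_{U\leq_p G}U$ denote its largest residually-$p$ quotient. This quotient is characteristic, so an isomorphism $A_\Gamma\cong A_\Delta$ induces an isomorphism $(A_\Gamma)_{(p)}\cong(A_\Delta)_{(p)}$.

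I will show that $(A_\Gamma)_{(p)}\cong A_{\Gamma_p}$. By construction of $\Gamma_p$ in Section~\ref{sec:central}, there is a natural surjection $A_\Gamma\to A_{\Gamma_p}$. It comes from collapsing the relations that become trivial or identify generators after pro-$p$ completion, which is exactly what Theorem~\ref{teo:prop} encodes. For the identification I then need two facts.
\begin{itemize}
\item Every homomorphism from $A_\Gamma$ to a finite $p$-group factors through $A_{\Gamma_p}$. By Theorem~\ref{teo:prop}, both groups have the same pro-$p$ completion, and the map $A_\Gamma\to A_{\Gamma_p}$ induces that identification. Hence the kernel of $A_\Gamma\to A_{\Gamma_p}$ lies in every $p$-power index normal subgroup.
\item $A_{\Gamma_p}$ is residually-$p$. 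This is part of Theorem~\ref{teo:filtered}, and also follows from \cite{Moldavanskii} and \cite{BS} in the pro-$p$ FC case.
\end{itemize}
Together these give $\ker(A_\Gamma\to A_{\Gamma_p})=\bigcap_{U\leq_p A_\Gamma}U$, so $(A_\Gamma)_{(p)}\cong A_{\Gamma_p}$. The same argument gives $(A_\Delta)_{(p)}\cong A_{\Delta_p}$.

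\textbf{Step 2: apply rigidity.} From Step 1 we get $A_{\Gamma_p}\cong A_{\Delta_p}$. Both groups are even, and they are FC because $A_\Gamma$ and $A_\Delta$ are pro-$p$ FC. The main result of \cite{BlascoParis} states that even Artin groups of FC type are rigid: an isomorphism of the groups forces an isomorphism of their labeled defining graphs. Applying it yields $\Gamma_p\cong\Delta_p$.

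\textbf{Main obstacle.} I expect the delicate point to be in Step 1: checking that the map $A_\Gamma\to A_{\Gamma_p}$ really induces an isomorphism on pro-$p$ completions. Theorem~\ref{teo:prop} gives the completions by a presentation, but I must make sure the identification is compatible with the natural map. To do this, I will show that every morphism from $A_\Gamma$ to a finite $p$-group satisfies the defining relations of $A_{\Gamma_p}$. This is a generator-level check using elementary facts about two-generator Artin relations in $p$-groups, and it is essentially the content of the proof of Theorem~\ref{teo:prop}. A secondary check is that the hypotheses of \cite{BlascoParis} match exactly: an even FC defining graph, with rigidity holding up to labeled graph isomorphism.
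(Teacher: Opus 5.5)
Your proof follows the paper's route. The paper identifies $A_\Gamma/\bigcap_i\gamma_i^{[p]}(A_\Gamma)$ with $A_{\Gamma_p}$: the kernel $T$ lies in every $\gamma_i^{[p]}$ by the proof of Theorem~\ref{teo:prop}, and $A_{\Gamma_p}$ is residually $p$ by Lemma~\ref{lem:resp}. For finitely generated groups this quotient is exactly your quotient by the intersection of all normal subgroups of $p$-power index. The paper then applies the Blasco-Garc\'{\i}a--Paris rigidity theorem, as you do.

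The one thing to fix is how you state that theorem. As the paper recalls in Section~\ref{sec:Isomorphism}, it does not say that even Artin groups of FC type are rigid; rigidity is only conjectured for general even Artin groups. What is proved there is rigidity for labels in a restricted set $\{2c,\infty\}\cup\{2d^r\mid r\geq 1\}$ with $\gcd(c,d)=1$. So the hypothesis to check in Step 2 is on the labels. It holds because, by construction, every label of $\Gamma_p$ and $\Delta_p$ has the form $2p^t$ (take $d=p$). The FC hypothesis is used in Step 1, to know that $A_{\Gamma_p}$ and $A_{\Delta_p}$ are residually $p$.
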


As one might expect, this theorem cannot be reversed. In fact, even if $\Gamma_p \cong \Delta_p$ for every prime $p$, the groups $A_\Gamma$ and $A_\Delta$ may still fail to be isomorphic. To make this explicit, we use again $\Sigma$-invariants to give an example of two non-isomorphic groups which are pro-$p$ FC Artin groups and have isomorphic $p$-parts of the defining graph for all $p$.


In the last section, Section~\ref{sec:Hydra}, we give an example illustrating how different the residual properties of groups with the same $p$-Zassenhaus restricted Lie algebra can be. More precisely, in Theorem~\ref{teo:Hydra} we construct, for each pro-$p$ FC Artin group $A_\Gamma$, a residually torsion-free nilpotent group $G$ such that $G$ and $A_\Gamma$ have the same $p$-Zassenhaus restricted Lie algebra. Note that Artin groups are residually torsion-free nilpotent if and only if they are RAAGs (see Section~\ref{sec:Artin}).

The structure of the paper is as follows. In Section~2 we review the necessary background on even Artin groups and prove Theorem~\ref{teo:CohomRing}. Section~3 is devoted to the study of $p$-Zassenhaus restricted Lie algebras of Artin groups, where we prove Theorems~\ref{teo:prop} and~\ref{teo:prestricted}. In Section~4 we establish Theorem~\ref{teo:isoproblem} and prove the residually-$p$ part of Theorem~\ref{teo:filtered}. Section~5 introduces the notion of $p$-Magnus groups together with the necessary preliminary results and we prove the $p$-magnus part of \ref{teo:filtered}. In Section~6 we complete the proof of Theorem~\ref{teo:filtered} by establishing the cohomological $p$-completeness property. Finally, the last section provides the example mentioned above of a residually torsion free nilpotent groups with the same $p$-Zassenhaus restricted Lie algebras as a prescribed  pro-$p$ FC Artin group.

\section{Even Artin groups}\label{sec:Artin}

There are several ways to define Artin groups, for example one may use Dynkin diagrams. Another option is to use a Coxeter matrix that determines the defining relators, or by analogy with RAAGs, the same info can be encoded in a labeled graph $\Gamma$ where the labels correspond to the finite entries of the Coxeter matrix. Here and throughout the paper, graph means finite simplicial labeled graph, where the labels are in $\Z_{\geq 2}$. We denote by $V(\Gamma)$ the set of vertices and by $E(\Gamma)$ the set of edges of $\Gamma$. 
With this point of view, we define the {\sl Artin group} $A_\Gamma$ as the group given by the presentation
$$A_\Gamma=\langle V(\Gamma)\mid ab\buildrel{m}\over\ldots=ba\buildrel{m}\over\ldots\text{ for each }\{a,b\}\in E(\Gamma)\text{ with label }m\rangle.$$
By a subgraph $\Delta\subseteq\Gamma$ we always mean an {\sl induced subgraph}, meaning that if two vertices of $\Delta$ are linked in $\Gamma$, then they are also linked in $\Delta$ (and with the same label). Often, will identify $\Delta$ with its set of vertices. 
The {\sl Coxeter group} associated to $\Gamma$ is the group
$$W_\Gamma=\langle A_\Gamma\mid a^2=1\text{ for each }a\in V(\Gamma)\rangle$$

The Artin group $A_\Gamma$ (or the graph $\Gamma$) is called {\sl spherical} if $W_\Gamma$ is finite. If the defining graph is a single edge the Artin group is called {\sl dihedral}, dihedral Artin groups are spherical.
An Artin group $A_\Gamma$ is of {\sl type FC} if for every complete subgraph $\Delta\subseteq\Gamma$ the group $A_\Delta$ is spherical, {\sl right angled Artin group} if all the labels are equal to two and {\sl even} if all labels are even numbers. In this last case we will also say that the graph $\Gamma$ is even. 

An important fact that we will often use is that the subgroup of $A_\Gamma$ generated by $\Delta$ is isomorphic to the group $A_\Delta$. If $A_\Gamma$ is even, the subgroup $A_\Delta$ is moreover a retract: to see it observe that the map 
$$\begin{aligned}
    A_\Gamma&\to A_\Delta\\
    a&\mapsto\Big\{\begin{aligned}
        &a\text{ if }a\in\Delta\\
        &1\text{ otherwise}.\\
    \end{aligned}\\
\end{aligned}$$
is well defined and restricts to the identity on $A_\Delta$.

There is another property of even Artin groups that will be crucial in this paper. A spherical graph $\Gamma$ is irreducible when it can not be decomposed as a union of two proper disjoint subgraphs $\Gamma_1$ and $\Gamma_2$ such that the edges conecting vertices in $\Gamma_1$ to vertices in $\Gamma_2$ have all label 2 (observe that if $\Gamma$ is spherical, it must be complete). There is a well known classification of irreducible spherical graphs in terms of irreducible Dynkin diagrams (cf. \cite{Coxeter 1935}). It is a consequence of this classification that there are only two types of spherical even graphs: either single vertices or single edges. In the case of a single edge $e$ with label, say, $2m$, the corresponding dihedral Artin group is
$$A_e=\langle a,b\mid (ab)^m=(ba)^m\rangle.$$
This has the following consequence: if $A_\Gamma$ is even and $\Delta\subseteq\Gamma$ is spherical, then $A_\Delta$ is a direct product of groups which are either infinite cyclic or dihedral even Artin groups.

\subsection{Cohomology groups for even Artin groups}

Although families such as right-angled Artin groups or spherical Artin groups are relatively well-known, there are very few results known for arbitrary Artin groups. A very important open problem in the area is the $K(\pi,1)$-conjecture, if true, it would imply that Artin groups have many properties which currently are conjectural, for example it would imply that all the Artin groups are torsion-free, a fact that somehow surprisingly is not known in general. As stated in the introduction, for each Artin group there is a complex called the Salvetti complex such that the $K(\pi,1)$-conjecture is equivalent to the statement that the Salvetti complex is a classifying space for the group. This complex can be described as the CW-complex constructed from the presentation complex of $A_\Gamma$ by attaching cells corresponding to the spherical subgraphs of $\Gamma$ but we will not need the explicit construction of this complex here. Instead, we will use a chain complex constructed by Salvetti whose cohomology groups are the cohomology groups $H^i(A_\Gamma,R)$ where $R$ is a commutative unital ring. To describe this complex we will use the following technical result.

\begin{proposition}[\cite{Bourbaki 2008}, Chapter IV, Exercise 3]
	Let $W_\Gamma$ be a Coxeter group and $\Delta\subset\Gamma$ a subgraph. Then, for every coset in $W_\Gamma/W_{\Delta}$, there exists a unique representative $w\in W_\Gamma$ of minimal word length in its class $wW_{\Delta}$.
\end{proposition}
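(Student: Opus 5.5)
The plan is the standard exchange-condition argument for Coxeter groups. Write $S=V(\Delta)$ for the generating set of $W_\Delta$, and $\ell$ for the word length in the generators $V(\Gamma)$. Since $\ell$ takes values in $\mathbb N$, every coset $wW_\Delta$ contains an element of minimal length, so existence is immediate. The real content is uniqueness. I would obtain it from the stronger characterization that the minimal element $w_0$ of its coset satisfies
\[
\ell(w_0u)=\ell(w_0)+\ell(u)\qquad\text{for every } u\in W_\Delta,
\]
where $\ell(u)$ is computed with respect to $V(\Gamma)$. A standard fact is that this agrees with the length in $S$; it follows from the deletion condition, because any reduced word for an element of $W_\Delta$ may only use letters from $S$.

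First, I would show that if $w$ is of minimal length in $wW_\Delta$, then $\ell(ws)>\ell(w)$ for all $s\in S$. This is clear: $ws$ lies in the same coset, and multiplying by a generator changes $\ell$ by exactly $\pm1$. The parity homomorphism $W_\Gamma\to\{\pm1\}$ rules out equality.

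Second, I would prove the additivity claim by induction on $\ell(u)$. Write $u=u's$ with $\ell(u)=\ell(u')+1$ and $s\in S$, and suppose $\ell(w u' s)<\ell(wu')$. By the exchange condition applied to a reduced word $w=t_1\cdots t_k$ followed by a reduced word for $u'$, deleting some letter gives an expression for $wu's$. There are two cases.

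\emph{A letter of $u'$ is deleted.} Then $u's$ would have a shorter expression, contradicting $\ell(u)=\ell(u')+1$.

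\emph{A letter $t_i$ of $w$ is deleted.} Then $wu's=t_1\cdots\widehat{t_i}\cdots t_k u'$, so $w=t_1\cdots\widehat{t_i}\cdots t_k\,u's u'^{-1}$. This places the element $t_1\cdots\widehat{t_i}\cdots t_k$, of length less than $k$, in the coset $wW_\Delta$, contradicting minimality.

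Hence $\ell(wu)=\ell(w)+\ell(u)$. Uniqueness then follows: if $w,w'$ are both minimal in the same coset, write $w'=wu$ with $u\in W_\Delta$. Then $\ell(w')=\ell(w)+\ell(u)$, which forces $\ell(u)=0$, so $u=1$ and $w'=w$.

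The main obstacle I expect is justifying the exchange (or deletion) condition for $W_\Gamma$ and the compatibility of lengths in $W_\Delta$ and $W_\Gamma$. These are classical Coxeter-group facts (Bourbaki Ch.~IV, \S1), proved via the geometric representation or the root system. I would cite them rather than reprove them, since the paper itself attributes the proposition to Bourbaki.
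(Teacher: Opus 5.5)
Your proof is correct. It is the standard exchange-condition argument, which also gives the stronger fact $\ell(wu)=\ell(w)+\ell(u)$ for all $u\in W_\Delta$, and this is what the paper's citation of Bourbaki points to; the paper gives no proof of its own. One point is left implicit: the word $t_1\cdots t_k$ followed by a reduced word for $u'$ must be reduced before you apply the exchange condition, but this is exactly your induction hypothesis (or is unnecessary if you use the strong exchange condition for arbitrary expressions).
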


\noindent
The elements of the (left) coset space $W_\Gamma/W_{\Delta}$ that have minimal word length in their respective classes are called \textbf{$\Delta$-reduced elements}, and the set of all such elements is denoted by $W_\Gamma^{\Delta}$.

\begin{theorem}[\cite{Salvetti 1994}, Theorem 1.8]\label{Cochaincomplex}
	Let $A_\Gamma$ be an Artin group satisfying the $K(\pi,1)$-conjecture.  
	Define a cochain complex $(C^\ast,\partial^\ast)$ by
	$$C^k = \bigoplus_{\substack{X\subset\Gamma \ \text{spherical}\\ |V(X)|=k}} R\,\sigma_X,$$
	with differential
	$$\partial(\sigma_X) \;=\; 
	\sum_{\substack{v\in V(\Gamma\setminus X) \\ X\cup\{v\}\ \text{spherical}}}
	\langle X \mid X\cup\{v\}\rangle
	\sum_{w\in W^{X}_{X\cup\{v\}}} (-1)^{l(w)}\, \sigma_{X\cup\{v\}}.$$
	Then, $H^\ast(A_\Gamma;\,R) \;\cong\; H^\ast(C^\ast)$.
\end{theorem}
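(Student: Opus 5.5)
The plan is to realize the cochain complex $(C^\ast,\partial^\ast)$ as the cellular cochain complex, with trivial coefficients $R$, of the Salvetti complex $\mathrm{Sal}(\Gamma)$. Under the $K(\pi,1)$-conjecture $\mathrm{Sal}(\Gamma)$ is a $K(A_\Gamma,1)$, so its cellular cohomology is $H^\ast(A_\Gamma;R)$. Everything therefore reduces to describing the cells of $\mathrm{Sal}(\Gamma)$ and computing the incidence numbers of its cellular boundary.

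\emph{Step 1 (cells).} Recall the combinatorial construction. For each spherical $X\subseteq\Gamma$ the finite Coxeter group $W_X$ acts on $\mathbb{R}^{|X|}$, and the convex hull of a generic orbit is the Coxeter polytope $Q_X$, of dimension $|X|$. Its faces are indexed by pairs $(u,Y)$ with $Y\subseteq X$ and $u\in W_X^{Y}$: the face is $u\cdot Q_Y$. The Salvetti complex is obtained from $\bigsqcup_X Q_X$ by identifying, for each such face, $u\cdot Q_Y\subset Q_X$ with $Q_Y$ through the map induced by $u^{-1}$. Hence $\mathrm{Sal}(\Gamma)$ has exactly one $k$-cell $e_X$ for each spherical $X$ with $|V(X)|=k$. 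This matches the free $R$-modules $C^k$, with $\sigma_X$ dual to $e_X$. One checks that the $2$-skeleton is the presentation complex of $A_\Gamma$, since for $|X|=2$ the boundary of the $2m$-gon $Q_X$ reads the Artin relation. This identifies $\pi_1(\mathrm{Sal}(\Gamma))=A_\Gamma$, and it is exactly this complex that the $K(\pi,1)$-conjecture asserts is aspherical.

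\emph{Step 2 (incidence numbers).} Fix the order on $V(\Gamma)$ and orient each $Q_X$ accordingly. The cellular boundary is
\[
\partial e_{X\cup\{v\}}=\sum_{v}\ \sum_{w\in W^{X}_{X\cup\{v\}}}\varepsilon(w,X,v)\,e_X ,
\]
because the codimension-one faces of $Q_{X\cup\{v\}}$ of type $X$ are the translates $w\cdot Q_X$ with $w$ running over the minimal coset representatives, and all of them are glued to the same cell $e_X$. It remains to show that
\[
\varepsilon(w,X,v)=\langle X\mid X\cup\{v\}\rangle\,(-1)^{l(w)} .
\]
Here $\langle X\mid X\cup\{v\}\rangle$ is the sign comparing the induced orientation of the facet $Q_X$ with its chosen orientation; it is determined by the position of $v$ in the ordered set $X\cup\{v\}$. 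The factor $(-1)^{l(w)}$ holds because each simple reflection acts on $\mathbb{R}^{|X\cup\{v\}|}$ by an orientation-reversing map. Transporting $Q_X$ to $w\cdot Q_X$ and then gluing back via $w^{-1}$ therefore changes the orientation by $\det(w)=(-1)^{l(w)}$.

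\emph{Step 3 (dualize).} Apply $\operatorname{Hom}(-,R)$ to this boundary. This yields precisely the stated formula for $\partial(\sigma_X)$, a sum over $v$ with $X\cup\{v\}$ spherical. By asphericity we then conclude $H^\ast(C^\ast)\cong H^\ast(\mathrm{Sal}(\Gamma);R)\cong H^\ast(A_\Gamma;R)$.

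The main obstacle is Step 2, namely the careful sign bookkeeping. One must check that the characteristic maps defined through the $w^{-1}$ identifications are compatible with the chosen orientations, so that the degree of each attaching map restricted to a face is exactly $\pm(-1)^{l(w)}$ with a sign independent of $w$. My first approach would be to verify this in the rank-two case ($Q_X$ a $2m$-gon) and then reduce the general case to it through the product and face structure of Coxeter polytopes.
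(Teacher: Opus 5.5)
Your proposal is correct and is the standard argument behind the cited result. The paper gives no proof of its own and only quotes Salvetti, whose complex is exactly the cellular cochain complex of $\mathrm{Sal}(\Gamma)$: the factor $(-1)^{l(w)}$ arises as $\det(w)$ and the position sign $\langle X\mid X\cup\{v\}\rangle$ as the incidence of the base facet $Q_X$. For the sign bookkeeping you flag, no reduction to rank two is needed: $w$ is a symmetry of $Q_{X\cup\{v\}}$ itself, so it carries outward normals to outward normals, and hence the incidence of the facet $w\cdot Q_X$ is $\det(w)$ times that of $Q_X$.
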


Next, we show that the cochain complex of Theorem \ref{Cochaincomplex} simplifies considerably in the case of even Artin groups. 
Moreover, even Artin groups are known to satisfy the $K(\pi,1)$-conjecture (cf. \cite{Charney 2004}), so we get:

\begin{lemma}\label{CohomologyEven}
	If $A_\Gamma$ is an even Artin group, then all differentials in the cochain complex of Theorem \ref{Cochaincomplex} vanish, and hence
    $$H^n(A_\Gamma;R) \;=\; \bigoplus_{\substack{\Delta\subset\Gamma \ \text{spherical}\\ |V(\Delta)|=n}} R\,\sigma_\Delta.$$
\end{lemma}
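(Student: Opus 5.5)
We show directly that every coefficient in the differential of Theorem~\ref{Cochaincomplex} is zero. Since even Artin groups satisfy the $K(\pi,1)$-conjecture \cite{Charney 2004}, that theorem applies. Once all differentials vanish, $H^n(C^\ast)=C^n$, which is exactly the stated formula.

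Fix a spherical $X\subset\Gamma$ and a vertex $v\notin X$ with $Y=X\cup\{v\}$ spherical. The coefficient of $\sigma_Y$ in $\partial\sigma_X$ is, up to the sign $\langle X\mid Y\rangle$, the alternating sum $S=\sum_{w\in W_Y^X}(-1)^{l(w)}$. It therefore suffices to prove $S=0$.

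We use the structure of spherical even graphs recalled above. $Y$ is a disjoint union of isolated vertices and single edges with even labels, all mutually joined by edges labelled $2$. Hence $W_Y=\prod_i W_{Y_i}$ over the irreducible components $Y_i$, and $W_X=\prod_i W_{X\cap Y_i}$. The minimal coset representatives and the length both factor over this product. So $W_Y^X=\prod_i W_{Y_i}^{X\cap Y_i}$, and $S$ is the product of the corresponding alternating sums over the components. The components with $Y_i\subseteq X$ contribute $1$. The only other component is the one $Y_0$ containing $v$, and there are two cases for it.

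\begin{enumerate}
\item[(a)] If $Y_0=\{v\}$, then $W_{Y_0}=\Z/2$ and $X\cap Y_0=\emptyset$. The representatives are $1$ and $v$, with signs $+1$ and $-1$, so the sum is $0$.
\item[(b)] If $Y_0=\{u,v\}$ is an edge with label $2m$ and $u\in X$, then $W_{Y_0}$ is dihedral of order $4m$. The minimal representatives of $W_{Y_0}/\langle u\rangle$ are the $2m$ alternating words ending in $v$, namely $1, v, uv, vuv,\dots$, of lengths $0,1,\dots,2m-1$. Their signs sum to $0$ because $2m$ is even.
\end{enumerate}

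In either case $S=0$, so every differential vanishes.

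The only delicate point is the factorization of $W_Y^X$ and of the length over the irreducible components. This holds because the components commute and $l$ is additive on the direct product. The parity count in case (b) is where evenness of the label is used: for an odd label $m$ the same sum would be $1$, not $0$.
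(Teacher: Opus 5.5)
Your proposal is correct and follows essentially the paper's argument: use the decomposition of spherical even subgraphs into isolated vertices and even-labelled edges to reduce to the component containing the new vertex $v$, then check that the alternating sum vanishes over $W^{\emptyset}_{v}=\{1,v\}$ and over the $2m$ minimal coset representatives in the dihedral group of order $4m$. Your explicit factorization $W_Y^X=\prod_i W_{Y_i}^{X\cap Y_i}$ is a slightly more detailed version of the paper's identity $W^{(X_1\cup X_2)\setminus\{v\}}_{X_1\cup X_2}=W^{X_1\setminus\{v\}}_{X_1}$.
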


\begin{proof}
	Since $\Gamma$ is even, for each spherical $X\subset\Gamma$, the group $A_X$ is a direct product of groups which are either infinite cyclic or even dihedral. Moreover, if $A_X=A_{X_1}\times A_{X_2}$ and $v\in V(X_1)$, it follows that
	$$W^{(X_1\cup X_2)\setminus\{v\}}_{X_1\cup X_2}
	= W^{X_1\setminus\{v\}}_{X_1}.$$
		In particular, it suffices to check that $\sum\limits_{w\in W^{\emptyset}_{v}} (-1)^{l(w)}=0$ for each $v\in V(\Gamma)$ and that $\sum\limits_{w\in W^{u}_{\lbrace u,v\rbrace}} (-1)^{l(w)}=0$ for each $\lbrace u,v\rbrace\in E(\Gamma)$. Since $W^\emptyset_v=\{1,v\}$ for all $v\in V(\Gamma)$, we obtain
	
	$$\sum\limits_{w\in W^{\emptyset}_{v}} (-1)^{l(w)}=(-1)^{l(1)}+(-1)^{l(v)}=0$$
	Now, let $v\in V(\Gamma)$ and $e=\{v,w\}\in E(\Gamma)$ an edge of label, say, $2m$. Then, $W_e$ has order $4m$ so $W_e^v=\{1,w,vw,\dots,wv\cdots w\}$ has an even number of elements thus
	$$\sum\limits_{w\in W^{v}_{\lbrace v,w\rbrace}} (-1)^{l(w)}=(-1)^{l(1)}+(-1)^{l(w)}+\cdots+(-1)^{l(wv\cdots wv)}=1-1+1-1+\cdots+1-1=0.$$
	 This implies the result.
\end{proof}

\begin{remark} A similar argument shows $H_n(A_\Gamma;R)=\bigoplus_{\substack{\Delta\subset\Gamma \ \text{spherical}\\ |V(\Delta)|=n}} R\,\sigma_\Delta$.
\end{remark}

\subsection{Cohomology ring of even Artin groups}

Now we want to determine the ring structure of $H^\bullet(A_\Gamma,R)$ for $A_\Gamma$ an even Artin group. In the case when $A_\Gamma$ is a RAAG, $H^\bullet(A_\Gamma,R)$ is the exterior ring generated by the 1-dimensional classes $\sigma_v$ for $v\in V(\Gamma)$ with relations $\sigma_v\sigma_w=0$ whenever $v,w$ do not form an edge of $\Gamma$ (cf. \cite[Proposition 2.4]{Koberda 2022}). 
The ring structure of $H^\bullet(A_\Gamma,R)$ is also known for spherical Artin groups (cf. \cite{Landi 2000}). In particular, when $A_e$ is the even dihedral group associated to the edge $e=\{u,v\}$ with label $2m$, from results in \cite{Landi 2000} one deduces that $H^\bullet(A_e,R)$ is the exterior ring generated by the 1-dimensional classes $\sigma_u,\sigma_v$ and the 2-dimensional class $\sigma_e$ with relations $\sigma_u\sigma_v=m\sigma_e$, $\sigma_u\sigma_e=\sigma_v\sigma_e=0$.

\begin{lemma}\label{CupprodLemma}
	Let $A_\Gamma$ be an even Artin group and fix a total order in $V(\Gamma)$. With the notation of Lemma \ref{CohomologyEven} we have
	\begin{enumerate}
		\item[i)] If $e=\{u,v\}\in E(\Gamma)$ has label $2m$ and $u<v$, then $\sigma_u\sigma_v=m\sigma_{e}$.
		\item[ii)] If $\emptyset\neq X_1,X_2\subset\Gamma$ are spherical such that either $X_1\cup X_2$ is non-spherical or $X_1\cap X_2\neq\emptyset$, then $\sigma_{X_1}\sigma_{X_2}=0$.
		\item[iii)] If $X_1,X_2\subset\Gamma$ are spherical such that $X_1\cap X_2=\emptyset$ and $A_{X_1}$, $A_{X_2}$ commute, then $X_1\cup X_2$ is spherical and $\sigma_{X_1}\sigma_{X_2}=\pm\sigma_{X_1\cup X_2}$.
	\end{enumerate}
\end{lemma}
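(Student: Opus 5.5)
The plan is to exploit naturality of cup products under the retractions $A_\Gamma\to A_\Delta$ and the inclusions $A_\Delta\hookrightarrow A_\Gamma$, together with the explicit description of $H^\bullet$ from Lemma \ref{CohomologyEven}. The first step is to set up naturality. For a subgraph $\Delta\subseteq\Gamma$, the retraction $r\colon A_\Gamma\to A_\Delta$ and the inclusion $\iota\colon A_\Delta\to A_\Gamma$ satisfy $r\iota=\mathrm{id}$. Hence $r^*\colon H^\bullet(A_\Delta,R)\to H^\bullet(A_\Gamma,R)$ is an injective ring map, and $\iota^*$ is a ring map left inverse to it.

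I would then check, via the naturality of the Salvetti complex with respect to inclusions of parabolic subgraphs, that $r^*(\sigma_X)=\sigma_X$ for every spherical $X\subseteq\Delta$. Equivalently, $\iota^*(\sigma_X)=\sigma_X$ if $X\subseteq\Delta$ and $\iota^*(\sigma_X)=0$ otherwise. This is the restriction map to the subcomplex of cells indexed by subsets of $\Delta$, and its identification with the cochain-level projection is the step I expect to need the most care. Salvetti's complex is functorial for these standard parabolic inclusions, since the Salvetti complex of $A_\Delta$ is a subcomplex of that of $A_\Gamma$. Rank counting then forces $r^*$ to send the basis $\{\sigma_X\}$ to the corresponding basis elements, up to sign. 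The signs can be normalised by the choice of orientation, which fixes $\sigma_X$ compatibly.

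With naturality in hand, the three parts follow.

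For i), pull back along $r\colon A_\Gamma\to A_e$. The relation $\sigma_u\sigma_v=m\sigma_e$ holds in $H^\bullet(A_e,R)$ by Landi's computation quoted above, with the ordering $u<v$ fixing the sign. Since $r^*$ is a ring map sending $\sigma_u,\sigma_v,\sigma_e$ to the same-named classes, the relation holds in $A_\Gamma$.

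For iii), since $A_{X_1}$ and $A_{X_2}$ commute and are disjoint, every edge between them has label $2$. So $X=X_1\cup X_2$ is spherical and $A_X=A_{X_1}\times A_{X_2}$. By the Künneth formula (all the groups involved are free $R$-modules, by Lemma \ref{CohomologyEven}), the top-type class $\sigma_X$ is $\pm$ the cross product $\sigma_{X_1}\times\sigma_{X_2}=p_1^*\sigma_{X_1}\cdot p_2^*\sigma_{X_2}$. Here $p_i^*\sigma_{X_i}$ is identified with $\sigma_{X_i}$ by naturality, because the projections are the retractions. Pulling back along $r\colon A_\Gamma\to A_X$ gives the claim in $\Gamma$.

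For ii), let $X=X_1\cup X_2$ and compute in $H^\bullet(A_X,R)$, then pull back by $r^*$. The product has degree $|X_1|+|X_2|$, which exceeds $|X|$ when $X_1\cap X_2\neq\emptyset$. That degree part of $H^\bullet(A_X)$ is spanned by $\sigma_Y$ with $Y\subseteq X$ spherical and $|Y|=|X_1|+|X_2|>|X|$, and there are none, so the product is $0$. If $X_1\cap X_2=\emptyset$ but $X$ is non-spherical, then $|Y|=|X|$ forces $Y=X$, which is not spherical, so again the degree-$|X|$ part of $H^\bullet(A_X)$ vanishes and the product is $0$.

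The main obstacle is the naturality identification $r^*\sigma_X=\sigma_X$ with consistent signs, including compatibility with Landi's normalisation. Everything else is degree counting and Künneth.
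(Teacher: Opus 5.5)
Your argument for each of i)--iii) is the paper's. You inject $H^\bullet(A_X,R)$ into $H^\bullet(A_\Gamma,R)$ via the retraction, then use Landi's dihedral computation for i), degree vanishing in $H^\bullet(A_X,R)$ for ii), and K\"unneth for iii). The paper tacitly assumes that this injection sends each $\sigma_Y$ to the class of the same name. You are right to isolate that as the delicate point, but the justification you sketch does not prove it.

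The two statements you call equivalent are not equivalent. That $\iota^*$ is the coordinate projection is the easy subcomplex fact. Together with $\iota^*r^*=\mathrm{id}$, it only gives $r^*\sigma_X\in\sigma_X+\operatorname{span}\{\sigma_Y : Y\not\subseteq\Delta,\ |Y|=|X|\}$, and no rank count removes this ambiguity. For instance, if $\Delta=e$ is an edge and $\Gamma$ has another edge $e'$, nothing you have said excludes $r^*\sigma_e=\sigma_e+\sigma_{e'}$. Such a term would change the relation in i).

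The missing input is that $r$ kills the classes not supported in $\Delta$, which is easiest to see in integral homology.
\begin{itemize}
\item By the remark after Lemma~\ref{CohomologyEven}, $H_n(A_\Gamma;\mathbb{Z})$ is free on classes $[Y]$. So by universal coefficients, $r^*=\operatorname{Hom}(r_*,R)$.
\item For $Y\subseteq\Delta$, $r_*[Y]=r_*\iota_*[Y]=[Y]$, using that the Salvetti complex of $\Delta$ is a subcomplex of that of $\Gamma$.
\item For $Y\not\subseteq\Delta$, write $[Y]=(\iota_Y)_*[Y]$ with $\iota_Y\colon A_Y\to A_\Gamma$. The composite $r\circ\iota_Y$ factors through the retraction $A_Y\to A_{Y\cap\Delta}$. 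Since $Y\cap\Delta$ has fewer than $|Y|$ vertices, $H_{|Y|}(A_{Y\cap\Delta};\mathbb{Z})=0$, so $r_*[Y]=0$.
\end{itemize}
Hence $r_*$ is the coordinate projection, and $r^*\sigma_X=\sigma_X$ exactly, with no sign ambiguity. With this in place, the rest of your proposal goes through as written.
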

\begin{proof}
	Since $A_\Gamma$ is even, every special subgroup is a retract. In particular, for each subgraph $X\subset\Gamma$, the retraction $A_\Gamma\to A_X$ induces an injective map
	$$H^\ast(A_X,R)\to H^\ast(A_\Gamma,R).$$
	
	For i), take $X=\{u,v\}$. Then $A_X$ is a dihedral Artin group so the result follows from \cite{Landi 2000} (see above).
	
	For ii), assume first that $X=X_1\cup X_2$ is not spherical, then Theorem~\ref{Cochaincomplex} implies that for $m=|X|$, $H^{m}(A_X,R)=0$, so the cup product must be zero. Similarly, if $X_1\cap X_2\neq\emptyset$, then $|X_1|+|X_2|>m=|X|$ so again the cup product must be zero in $H^\bullet(A_X,R)$.
	
	For iii), observe that $X=X_1\cup X_2$ is always spherical and the cup product behaves well with respect to direct products by the Künneth formula.
\end{proof}

We are ready to prove Theorem \ref{teo:CohomRing}.

 \bigskip

\noindent{\sl Proof of Theorem \ref{teo:CohomRing}. } Let $S$ be an (abstract) exterior ring defined by the presentation in the statement, i.e., generated by the elements $\tilde\sigma_v$ for $v\in V(\Gamma)$ and $\tilde\sigma_e$ for $e=\{u,v\}$ edge in $E(\Gamma)$ with label bigger than 2 subject to:
\begin{itemize}
\item[(1)] $\tilde\sigma_u\tilde\sigma_v=m\tilde\sigma_e$ if $e=\{u,v\}$ has label $2m$ and $u<v$,
\item[(2)] $\tilde\sigma_{X_1}\tilde\sigma_{X_2}=0$ if $X_1\cap X_2\neq\emptyset$ or $X_1\cup X_2$ is not spherical. 
\end{itemize}
Items i) and ii) of Lemma \ref{CupprodLemma} imply that $\tilde\sigma_X\mapsto\sigma_X$ induces a well-defined (graded) ring homomorphism $\varphi:S\to H^\bullet(A_\Gamma,R)$. Moreover, $H^\bullet(A_\Gamma,R)$ has a free $R$-basis of the form $\{\sigma_X\mid X\subseteq\Gamma\text{ spherical}\}$ and as $\Gamma$ is even, all the spherical subgroups decompose as direct products of spherical subgroups associated to either single vertices of edges. By item iii) of Lemma \ref{CupprodLemma} this implies that $\varphi$ is an epimorphism.

To see that $\varphi$ is indeed an isomorphism we claim that for $X_1,\dots,X_n\subseteq\Gamma$ such that each $X_i$ is either a single vertex or an edge we have 
	$\tilde\sigma_{X_1}\cdots\tilde\sigma_{X_n}=0$ in $S$ if:
	\begin{itemize}
		\item $X_i\cap X_j\neq\emptyset$ for some $i,j\in\{1,\dots,n\}$, or
        \item they are pairwise disjoint and $X:=X_1\cup\cdots\cup X_n$ is not spherical.
	\end{itemize}
Observe that this claim implies that for each $k$ the $R$-rank of the $k$-th degree term $S_k$ of $S$ is at most the $R$-rank of $H^\bullet(A_\Gamma,R)$. This together with the fact that $\varphi$ is epi yields the result.

	We treat the two alternatives separately. First, assume that $X_i\cap X_j\neq\emptyset$ for some $i\neq j$.
	Then, by a relator of type (2), we have
	$\tilde\sigma_{X_i}\tilde\sigma_{X_j}=0$, and thus the whole product vanishes. So we may assume that the $X_i$'s are pairwise disjoint and $X$ is not spherical. Since $\Gamma$ is even, one of the following holds:
	\begin{enumerate}
		\item[i)] $X$ is not complete or;
		\item[ii)] there exists a triangle $\Theta\subset X$ having at least two edges
		with labels greater than $2$.
	\end{enumerate}
	
	In case i), there exist indices $i\neq j$ such that
	$X_i\cup X_j$ is not complete. By a relator of type (2),
	$\tilde\sigma_{X_i}\tilde\sigma_{X_j}=0$, and therefore the whole product vanishes.
	
	In case ii), let $\Theta=\{u,v,w\}$ be such a triangle.
	Either there exist indices $i,j$ such that
	$X_i\cup X_j=\Theta$, in which case
	$\tilde\sigma_{X_i}\tilde\sigma_{X_j}=0$ by a relator of type (2), and hence the
	whole product vanishes; or the vertices of $\Theta$ appear as singletons among
	the $X_k$'s, say $X_i=\{u\}$, $X_j=\{v\}$, and $X_k=\{w\}$.
	Write $2m$ for the (even) label on the edge $\{u,v\}$. Using a relator
	of type (1), we obtain
	$$
	\tilde\sigma_u\tilde\sigma_{v}\tilde\sigma_{w}
	= \pm m\,\tilde\sigma_{\{u,v\}}\tilde\sigma_w.
	$$
	Since $\{u,v\}\cup\{w\}$ is non-spherical by hypothesis,
	$\tilde\sigma_{\{u,v\}}\tilde\sigma_w=0$ by a relator of type (2), and hence the triple product
	(and therefore the entire product) vanishes.\qed
\subsection{The isomorphism problem of even Artin groups}\label{sec:Isomorphism}
An important open problem in the theory of Artin groups is to determine whether a group isomorphism $A_\Gamma \cong A_\Delta$ implies a (labeled) graph isomorphism $\Gamma \cong \Delta$. This is known to hold for arbitrary RAAGs (see \cite{Droms}), and it is conjectured to be true for arbitrary even Artin groups (see \cite{Blasco-García 2019}). In that work, Blasco-García and París proved the statement for Artin groups whose labels all lie in
$$\{2c,\infty\} \cup \{2d^r \mid r \geq 1\},$$
for some integers $c,d \geq 2$ with $\gcd(c,d) = 1$.

It is natural to expect that the cohomology ring of even Artin groups could be used to distinguish them; that is, one might hope that
$$A_\Gamma \cong A_\Delta \quad \text{if and only if} \quad H^\bullet(A_\Gamma,\mathbb{Z}) \cong H^\bullet(A_\Delta,\mathbb{Z}).$$
However, in this section we show that this is not the case, by constructing an example for which this equivalence fails.

Consider the Artin groups $A_\Gamma$ and $A_\Delta$ defined by the following graphs (from left to right):
		$$\begin{tikzpicture}[main/.style = {draw, circle},node distance={15mm}] 
		\node[label=below:{$u_2$}][main] (1)  {}; 
		\node[label={$u_1$}][main] (6) [above right of=1] {};
		\node[label=below:{$u_3$}][main] (2) [right of=1] {}; 
		
		\draw[-] (1) -- node[below] {$\scriptstyle{10}$}  (2);
		\draw[-] (1) -- node[above left]{$\scriptstyle{22}$}  (6);
		\draw[-] (6) -- node[pos=0.7,above right] {$\scriptstyle{22}$}   (2);
	\end{tikzpicture} \;\;\;\;\;\;\;\;\;\;\;\;\;\;\;\begin{tikzpicture}[main/.style = {draw, circle},node distance={15mm}] 
		\node[label=below:{$v_2$}][main] (1)  {}; 
		\node[label={$v_1$}][main] (6) [above right of=1] {};
		\node[label=below:{$v_3$}][main] (2) [right of=1] {}; 
		
		\draw[-] (1) -- node[below] {$\scriptstyle{110}$}  (2);
		\draw[-] (1) -- node[above left]{$\scriptstyle{2}$}  (6);
		\draw[-] (6) -- node[pos=0.7,above right] {$\scriptstyle{22}$}   (2);
	\end{tikzpicture}  $$
\begin{proposition}
$H^\bullet(A_\Gamma,\mathbb{Z}) \cong H^\bullet(A_\Delta,\mathbb{Z})$.
\end{proposition}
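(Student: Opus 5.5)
The plan is to apply Theorem~\ref{teo:CohomRing} with $R=\mathbb{Z}$ to both graphs and reduce the question to comparing two integer matrices up to equivalence.

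\textbf{Step 1: the additive structure.} By Lemma~\ref{CohomologyEven}, a $\mathbb{Z}$-basis of $H^\bullet$ is indexed by the spherical subgraphs. In both $\Gamma$ and $\Delta$ these are the empty set, the three vertices and the three edges. The whole triangle is not spherical in either graph, because each has a triangle with at least two labels greater than $2$: $22,22$ in $\Gamma$ and $110,22$ in $\Delta$. Hence in both cases
\[
H^0=\mathbb{Z},\qquad H^1=\mathbb{Z}^3,\qquad H^2=\mathbb{Z}^3,\qquad H^{\ge 3}=0.
\]

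\textbf{Step 2: reduce to the cup product on $H^1$.} By relation (2), any product of a degree $1$ class $\sigma_w$ with a degree $2$ class $\sigma_e$ vanishes: either $w\in e$, or $e\cup\{w\}$ is the non-spherical triangle. Products of degree $2$ classes land in $H^4=0$. So the only nontrivial structure is the alternating pairing
\[
c\colon \Lambda^2 H^1\to H^2 .
\]
By relation (1), in the bases $\{\sigma_u\wedge\sigma_v\}$ and $\{\sigma_e\}$, this map is diagonal. The diagonal entries are the values $m$ where the label is $2m$:
\[
c_\Gamma=\mathrm{diag}(5,11,11),\qquad c_\Delta=\mathrm{diag}(55,1,11).
\]
The signs coming from the chosen vertex order do not matter.

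Two such graded rings are isomorphic exactly when there exist $g\in GL(H^1)$ and $h\in GL(H^2)$ with
\[
h\circ c_\Gamma=c_\Delta\circ\Lambda^2 g .
\]
This is the only step that needs a genuine argument.

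\textbf{Step 3: show that $g\mapsto\Lambda^2 g$ is onto $GL_3(\mathbb{Z})$.} For rank $3$, use the pairing $\Lambda^2\mathbb{Z}^3\cong(\mathbb{Z}^3)^*$ induced by $\Lambda^3\mathbb{Z}^3\cong\mathbb{Z}$. Under this identification, $\Lambda^2 g$ corresponds to $\det(g)\,(g^{-1})^{T}$, which is the cofactor matrix. The map $g\mapsto \det(g)\,g^{-T}$ is a bijection of $GL_3(\mathbb{Z})$; indeed it is an involution up to sign.

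Consequently $g$ and $h$ may be chosen independently in $GL_3(\mathbb{Z})$. The condition in Step 2 therefore reduces to $c_\Gamma$ and $c_\Delta$ being equivalent integer matrices, that is, having the same Smith normal form.

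\textbf{Step 4: compare Smith normal forms via determinantal divisors.}
\begin{itemize}
\item For $\mathrm{diag}(5,11,11)$: the gcd of the entries is $1$; the $2\times 2$ minors are $55,55,121$, with gcd $11$; the determinant is $605$.
\item For $\mathrm{diag}(55,1,11)$: the gcd of the entries is $1$; the $2\times 2$ minors are $55,605,11$, with gcd $11$; the determinant is $605$.
\end{itemize}
Both matrices therefore have invariant factors $(1,11,55)$, so they are equivalent.

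\textbf{Step 5: assemble the isomorphism.} Take $g\in GL_3(\mathbb{Z})$ with $\Lambda^2 g$ equal to the right-hand change of basis, and $h$ the left-hand one. Together with the identity in degree $0$, this defines a degree-preserving additive isomorphism. It respects all products: on $H^1\times H^1$ by construction, and all other products of positive-degree classes are zero on both sides. Hence it is an isomorphism of graded rings.
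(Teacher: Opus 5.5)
The gap is in Step 3. The map $g\mapsto\det(g)\,g^{-T}$ is not a bijection of $GL_3(\mathbb{Z})$. Indeed $\det\bigl(\det(g)\,g^{-T}\bigr)=\det(g)^3\det(g)^{-1}=\det(g)^2=1$, so its image lies in $SL_3(\mathbb{Z})$. Also $g$ and $-g$ have the same image; being an involution up to sign does not give bijectivity. Equivalently, $\det(\Lambda^2 g)=\det(g)^2=1$ for every $g\in GL_3(\mathbb{Z})$, so $\Lambda^2 g$ never realizes a change of basis of determinant $-1$. Hence the sentence ``$g$ and $h$ may be chosen independently in $GL_3(\mathbb{Z})$'' is false. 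Step 5 then fails as written whenever the right-hand matrix $Q$ in a Smith normal form identity $Pc_\Gamma Q=c_\Delta$ has $\det Q=-1$, because $Q^{-1}$ is then not of the form $\Lambda^2 g$.

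The repair is short and uses that the rank is odd. The map is onto $SL_3(\mathbb{Z})$: for $M\in SL_3(\mathbb{Z})$, the matrix $g=M^{-T}$ has $\det g=1$ and $\det(g)g^{-T}=M$. If $\det Q=-1$, replace $(P,Q)$ by $(-P,-Q)$, which is allowed because $\det(-I_3)=-1$. So equivalence of $c_\Gamma$ and $c_\Delta$ under $GL_3(\mathbb{Z})\times GL_3(\mathbb{Z})$ coincides with equivalence under $GL_3(\mathbb{Z})\times SL_3(\mathbb{Z})$. Steps 1, 2 and 4 are correct, including the invariant factors $(1,11,55)$ on both sides, so with this fix your computation completes the proof.

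Your argument is then a genuinely different route from the paper's. The paper simply exhibits an explicit $f$ on $H^1$ and $H^2$ and asserts it is bijective and multiplicative, which gives a concrete certificate. Your reduction instead explains why an isomorphism must exist and replaces the hand check by an invariant-factor comparison. That hand check is in fact delicate: with the paper's printed formulas, $f(\sigma_{u_1})f(\sigma_{u_2})=11\sigma_{v_1v_2}+121\sigma_{v_1v_3}-110\sigma_{v_2v_3}$. Since this must equal $11f(\sigma_{u_1u_2})$, the coefficient of $\sigma_{v_1v_3}$ in $f(\sigma_{u_1u_2})$ has to be $11$ rather than $1$. The corrected degree-$2$ matrix still has determinant $1$.
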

\begin{proof}
   Consider the map $f: H^\bullet(A_\Gamma,\mathbb{Z}) \cong H^\bullet(A_\Delta,\mathbb{Z})$ defined by
\begin{align*}
f(\sigma_{u_1}) &= 11\sigma_{v_1} + 2\sigma_{v_3}, \\
f(\sigma_{u_2}) &= \sigma_{v_2} + \sigma_{v_3}, \\
f(\sigma_{u_3}) &= 5\sigma_{v_1} + \sigma_{v_3}, \\
f(\sigma_{u_1u_2}) &= \sigma_{v_1v_2} + \sigma_{v_1v_3} - 10\sigma_{v_2v_3}, \\
f(\sigma_{u_1u_3}) &= \sigma_{v_1v_3}, \\
f(\sigma_{u_2u_3}) &= -\sigma_{v_1v_2} - 11\sigma_{v_1v_3} + 11\sigma_{v_2v_3}.
\end{align*}
It is easily checked that this map induces isomorphisms $H^i(A_\Gamma,\mathbb{Z}) \cong H^i(A_\Delta,\mathbb{Z})$ for $i=1,2$, and that $f(\sigma_{u_i}\sigma_{u_j}) = f(\sigma_{u_i})\,f(\sigma_{u_j})$
for all $i,j \in \{1,2,3\}$. Hence $f$ respects cup products, and therefore it is an isomorphism of graded rings.   
\end{proof}
To prove that $A_\Gamma$ and $A_\Delta$ are not isomorphic, we will use the $\Sigma^1$-invariant. This is a group invariant of geometric nature that forms part of the bigger family of $\Sigma$-invariants, also called BNRS-invariant after Bieri, Neumann, Renz and Strebel (cf. \cite{BieriRenz 1988, BieriNeumannStrebel 1987}).

\begin{definition}
Let $G$ be a finitely generated group and $S$ a generating set of $G$. The \textit{character sphere} of $G$ is defined as
$$S(G)=\left(\mathrm{Hom}(G,\mathbb{R})\setminus\{0\}\right)/\sim,$$
where $\chi_1\sim\chi_2$ if and only if there exists $t>0$ such that $\chi_1=t\chi_2$. The \textit{$\Sigma^1$-invariant} of $G$ is defined as
$$\Sigma^1(G)=\{[\chi]\in S(G)\mid \mathrm{Cay}_\chi(G,S)\text{ is connected}\}.$$
Here, $\mathrm{Cay}(G,S)$ denotes the Cayley graph of $G$ with respect to the generating set $S$, and $\mathrm{Cay}_\chi(G,S)$ is the subgraph induced by the vertices $v$ with $\chi(v)\geq 0$.
\end{definition}

$\Sigma^1$-invariants of triangular Artin groups were computed in \cite[Lemma 7.13]{Escartín-MP}.

\begin{lemma}
If $A_\Omega$ is a triangular Artin group, then $[\chi]\in\Sigma^1(A_\Omega)$ if and only if $\mathrm{Liv}^\chi$ is connected.
\end{lemma}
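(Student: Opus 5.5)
Since the statement is quoted from \cite[Lemma 7.13]{Escartín-MP}, I would recover it along the lines used there. The proof splits into the two implications. Throughout, $\mathrm{Liv}^\chi$ is the living subgraph of $\Omega$. Its vertices are the $v$ with $\chi(v)\neq 0$. An edge $\{u,v\}$ is kept when its label is $2$, or, for a label $>2$, when $\chi$ does not vanish on the central (Garside-type) element of the dihedral group $A_{\{u,v\}}$; for an even label this is $\chi(u)+\chi(v)\neq 0$. I assume the definition from \cite{Escartín-MP} agrees with this reading. Because $\Omega$ is a triangle, $\Omega$ is complete, so there are only finitely many combinatorial configurations to check.

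\emph{Sufficiency.} Suppose $\mathrm{Liv}^\chi$ is connected. I would use the standard criterion of Bieri--Neumann--Strebel and Meier--Meinert--VanWyk type. First, if $g$ commutes with, or is linked by a ``central power'' relation to, an element $h$ with $\chi(h)\neq0$, then $h$ and $g$ lie in a common $\chi$-connected piece of $\mathrm{Cay}_\chi$. Second, if a generating set can be ordered so that each generator is linked in this way to an earlier one, and the first has $\chi\neq 0$, then $[\chi]\in\Sigma^1$.
\begin{enumerate}
\item For a label-$2$ edge, commutation gives the link directly.
\item For a living edge of label $2m$, the element $z=(uv)^m$ is central in $A_{\{u,v\}}$ with $\chi(z)\neq0$. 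Hence $u$ and $v$ both lie in the same connected piece, since each commutes with $z$.
\item Connectedness of $\mathrm{Liv}^\chi$ chains all living vertices together.
\item A dead vertex $w$ in the triangle is joined to a living one. Either the edge has label $2$ and $w$ commutes with it, or it is an edge with nonvanishing central element, which absorbs $w$.
\end{enumerate}
This shows that $\mathrm{Cay}_\chi$ is connected.

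\emph{Necessity.} Suppose $\mathrm{Liv}^\chi$ is disconnected. I would produce a splitting $A_\Omega=A*_C B$ (or an HNN extension) with $\chi|_C=0$ and $C$ a proper subgroup of both factors. By \cite{BieriNeumannStrebel 1987}, such a splitting forces $[\chi]\notin\Sigma^1$. In the triangle the possible disconnections are few:
\begin{enumerate}
\item All three vertices are living, but some edges are dead. Then one of $\chi(u)+\chi(v)=0$ or a similar cancellation occurs. In that case I would use the splitting of $A_\Omega$ along the edge subgroups. The natural candidate is the amalgam of the two edge groups at the isolated vertex, or a dihedral group, whose ``dead'' part lies in $\ker\chi$.
\item Some vertex is dead. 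Then I would view $A_\Omega$ as an amalgam over the special subgroup generated by the dead part, which is contained in $\ker\chi$.
\end{enumerate}

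The main obstacle is this necessity direction when the triangle has all labels $>2$. Then $A_\Omega$ does not visibly split over special subgroups, and the kernel of $\chi$ on a dihedral subgroup is not generated by vertices. I would first try to work inside the dihedral group $A_{\{u,v\}}$ with $\chi(z)=0$. There I would use its structure as (free group)$\,\times\,\mathbb Z$ up to finite index, and the known failure of $\Sigma^1$ for free groups, to show directly that $\mathrm{Cay}_\chi$ has infinitely many components. The final step would be to transfer this to $A_\Omega$ via the retraction $A_\Omega\to A_{\{u,v\}}$, which restricts the character appropriately.
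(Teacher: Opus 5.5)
The paper gives no proof of this lemma; it is quoted from Lemma~7.13 of the earlier work of Escart\'{\i}n-Ferrer and Mart\'{\i}nez-P\'erez, so your proposal has to stand on its own.

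\textbf{What works.} Your sufficiency direction is sound. Add the central elements $(uv)^m$ of the living dihedral edges to the generating set. The living part of the commuting graph is then connected and dominating: a dead vertex $w$ can only meet a living $u$ along an even edge, whose central element has character $m\chi(u)\neq 0$. The standard commuting-generators criterion then gives $[\chi]\in\Sigma^1$.

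In the necessity direction, the configuration ``dead vertex $w$, dead opposite edge $\{u,v\}$'' is also fine. It does not go through an amalgam over the dead part, which is not available since a complete $\Omega$ has no visual splitting. It goes through your last idea instead. Both edges at $w$ are then even, so $\chi$ factors through the retraction $A_\Omega\to A_{\{u,v\}}$. There it kills the centre and hence factors through $\mathbb{Z}*\mathbb{Z}_m$, whose $\Sigma^1$ is empty.

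\textbf{The gap.} The other disconnected configuration is not covered: all three vertices living and one vertex isolated by two dead edges. For example, take $\chi(u)=1$, $\chi(v)=\chi(w)=-1$ with $\{u,v\}$, $\{u,w\}$ of even labels $2m,2n\geq 4$. These are the six points of the second bullet in the proof of Proposition~\ref{TriangleArtin}, and none of your tools reaches them.
\begin{itemize}
\item The ``amalgam of the two edge groups at the isolated vertex'' is the Artin group of the path $v$--$u$--$w$. $A_\Omega$ is only a proper quotient of it, and its edge group $\langle u\rangle$ is not even in $\ker\chi$.
\item No vertex is dead, so $\chi$ factors through no retraction onto an edge group. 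Knowing $[\chi|_H]\notin\Sigma^1(H)$ for a retract $H$ says nothing about $[\chi]$: take $F_2\leq\mathbb{Z}\times F_2$ with $\chi$ nonzero on both factors.
\item The natural quotient identifying $v$ with $w$ yields the dihedral Artin group of label $2\gcd(m,n)$. This settles the case $\gcd(m,n)>1$, but it collapses to $\mathbb{Z}^2$ when $\gcd(m,n)=1$. That already happens for the labels $22$ and $10$ at $u_2$ in the paper's own example.
\end{itemize}
So you need a genuinely new argument here. One option is an explicit non-ascending HNN decomposition, of $A_\Omega$ or of a quotient through which $\chi$ factors, with base group inside $\ker\chi$.

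Keep in mind also that your splitting criterion is only valid in two situations: amalgams whose edge group is proper in both factors, and non-ascending HNN extensions. For instance, $\mathbb{Z}^2$ is an HNN extension of $\langle a\rangle$ over $\langle a\rangle$, yet the character killing $a$ lies in $\Sigma^1(\mathbb{Z}^2)$.
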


Here, $\mathrm{Liv}^\chi$ is the subgraph of $\Omega$ obtained by deleting all vertices $v\in V(\Omega)$ with $\chi(v)=0$ and all edges $e=\{u,w\}\in E(\Omega)$ with $\chi(u)+\chi(w)=0$ and even label $\geq 4$.

Using this result, we obtain a criterion to distinguish  triangular even Artin groups.

\begin{proposition}\label{TriangleArtin}
If two triangular even Artin groups are isomorphic, then their defining graphs have the same number of edges labeled $2$.
\end{proposition}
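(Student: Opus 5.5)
Throughout, $A_\Omega$ denotes a triangular even Artin group on vertices $a,b,c$, with labels in $\{2,4,6,\dots\}$ on the three edges. The idea is to read off the number of edges labeled $2$ from an isomorphism invariant of $\Sigma^1(A_\Omega)$. Since $\Sigma^1$ is an invariant up to the homeomorphism of character spheres induced by an isomorphism, it suffices to extract the count from the complement $\Sigma^1(A_\Omega)^c\subseteq S(A_\Omega)$.

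The abelianization of $A_\Omega$ is $\Z^3$ because all labels are even, so $S(A_\Omega)\cong S^2$ with coordinates $(\chi(a),\chi(b),\chi(c))$. By the lemma above, $[\chi]\notin\Sigma^1$ exactly when $\mathrm{Liv}^\chi$ is disconnected. I will list the maximal pieces of the complement.
\begin{itemize}
\item If $\chi$ vanishes on one vertex, say $c$, then $\mathrm{Liv}^\chi$ is the edge $\{a,b\}$ or its two endpoints. It is disconnected iff $\chi(a)+\chi(b)=0$ and the label of $\{a,b\}$ is at least $4$. This gives isolated points $\pm[(1,-1,0)]$.
\item If $\chi$ vanishes on two vertices, $\mathrm{Liv}^\chi$ is a single vertex, which is connected.
\item If no vertex vanishes, $\mathrm{Liv}^\chi$ is a triangle minus those edges with label at least $4$ and $\chi(u)+\chi(w)=0$. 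Two such conditions, say $\chi(a)=-\chi(b)$ and $\chi(a)=-\chi(c)$, force $\chi(b)=\chi(c)$, so $\chi(b)+\chi(c)\ne0$. Hence at most two edges are ever deleted, and deleting two edges of a triangle leaves it connected (a path). Deleting one edge also leaves it connected.
\end{itemize}
So with no vanishing vertex the triangle stays connected, and $\Sigma^1(A_\Omega)^c$ consists of exactly $2k$ points, where $k$ is the number of edges with label at least $4$.

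If two such groups are isomorphic, the isomorphism induces a homeomorphism of character spheres carrying $\Sigma^1$ to $\Sigma^1$. The complements therefore have the same finite cardinality, so they have equally many edges of label $\ge4$, and hence equally many edges labeled $2$, since both graphs have three edges.

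The main obstacle is the triangle-connectivity step. I must make sure the Escartín–Martínez lemma applies to every triangular even case, including label-$2$ edges and possibly the complete triangle. For example, the triangle with labels $2,2,2k$ is spherical-free but complete. I must also make sure that "triangular" in the lemma means the complete triangle. If it also covers non-complete graphs, I will only use it for complete triangles, which is our setting.
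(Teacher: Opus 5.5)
Your strategy matches the paper's: count the points of $S(A_\Omega)\cong S^2$ missing from $\Sigma^1(A_\Omega)$. The paper phrases this via $\pi_1(\Sigma^1(A_\Omega))$, which for $S^2$ minus finitely many points amounts to the same thing. However, your computation of the complement contains a false step.

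\textbf{The error.} In the case where no vertex vanishes, you assert that deleting two edges of a triangle leaves a path. It does not. If the deleted edges are $\{a,b\}$ and $\{a,c\}$, what remains is the single edge $\{b,c\}$ plus the isolated vertex $a$, so $\mathrm{Liv}^\chi$ is disconnected. Consequently, whenever both edges at a vertex $a$ have label $\geq 4$, the two classes $\pm[\chi]$ with $\chi(b)=\chi(c)=-\chi(a)$, i.e.\ $\pm[(1,-1,-1)]$, also lie in $\Sigma^1(A_\Omega)^c$. These are exactly the paper's second family of conditions, $\chi(u_i)+\chi(u_{i+1})=0=\chi(u_i)+\chi(u_{i+2})$.

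\textbf{Corrected count.} The complement has $2k+2v$ points, where $k$ is the number of edges with label $\geq 4$ and $v$ is the number of vertices both of whose incident edges have label $\geq 4$. In a triangle, $v=0,0,1,3$ for $k=0,1,2,3$. So the complement has $0,2,6,12$ points, not your $0,2,4,6$. These are the numbers in the paper's proof, giving $\Sigma^1$ equal to the whole sphere, or with $\pi_1$ equal to $\mathbb{Z}$, $F_5$, $F_{11}$.

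\textbf{Repair.} Your conclusion survives, because $0,2,6,12$ are still pairwise distinct. With the corrected count, the rest of your argument goes through: an isomorphism induces a homeomorphism of character spheres carrying $\Sigma^1$ onto $\Sigma^1$, so the cardinality of the complement is an invariant. As written, though, the description of $\Sigma^1(A_\Omega)$ you derive is wrong for $k\geq 2$, and this must be fixed.

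\textbf{Minor point.} Your worry about applicability of the lemma is unnecessary: the paper applies it to all triangular Artin groups, including those with label-$2$ edges. Also, the triangle with labels $2,2,2k$ is spherical, since $A_\Omega\cong A_e\times\mathbb{Z}$. That case can be cross-checked with the product formula for $\Sigma^1$, which again gives the two points $\pm[(1,-1,0)]$.
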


\begin{proof}
Consider the group $A_\Omega$, where $\Omega$ is the following graph:
$$\begin{tikzpicture}[main/.style = {draw, circle},node distance={15mm}] 
    \node[label=below:{$u_2$}][main] (1)  {}; 
    \node[label={$u_1$}][main] (6) [above right of=1] {};
    \node[label=below:{$u_3$}][main] (2) [right of=1] {}; 
    
    \draw[-] (1) -- node[below] {$\scriptstyle{2a}$}  (2);
    \draw[-] (1) -- node[above left]{$\scriptstyle{2b}$}  (6);
    \draw[-] (6) -- node[pos=0.7,above right] {$\scriptstyle{2c}$}   (2);
\end{tikzpicture}$$
with $a,b,c\geq 2$. Then $S(A_\Omega)$ is the 2-sphere $\mathbb{S}^2$ and
$\mathrm{Liv}^\chi$ is disconnected if and only if one of the following holds:
\begin{itemize}
    \item $\chi(u_i)=0$ and $\chi(u_{i+1})+\chi(u_{i+2})=0$ for $i\in\{1,2,3\}$;
    \item $\chi(u_i)+\chi(u_{i+1})=0$ and $\chi(u_i)+\chi(u_{i+2})=0$ for $i\in\{1,2,3\}$,
\end{itemize}
where the indices are taken modulo $3$. In particular, $\Sigma^1(A_\Omega)$ is homotopy equivalent to $\mathbb{S}^2$ with $12$ points removed, so $\pi_1(\Sigma^1(A_\Omega))\cong F_{11}$. Similarly,  
\begin{enumerate}
    \item if $\Omega$ has precisely one edge labeled $2$, $\Sigma^1(A_\Omega)$ is $\mathbb{S}^2$ with $6$ points removed so $\pi_1(\Sigma^1(A_\Omega))\cong F_5$;

    \item  if $\Omega$ has precisely two edges labeled $2$, then $\Sigma^1(A_\Omega)$ is $\mathbb{S}^2$ with $2$ points removed so $\pi_1(\Sigma^1(A_\Omega))$ is cyclic and
    
   \item if all three edges of $\Omega$ are labeled $2$, then the group $A_\Omega$ is free abelian and  $\Sigma^1(A_\Omega)=S(A_\Omega)$.
\end{enumerate}
Therefore, the claim follows.
\end{proof}
\begin{corollary} The groups
    $A_\Gamma$  and $A_\Delta$ are not isomorphic.
\end{corollary}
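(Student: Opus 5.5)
This is a direct application of Proposition~\ref{TriangleArtin}. Both $A_\Gamma$ and $A_\Delta$ are triangular even Artin groups. The triangle $\Gamma$ has labels $10,22,22$, so none of its edges is labeled $2$. The triangle $\Delta$ has labels $110,2,22$, so exactly one of its edges, $\{v_1,v_2\}$, is labeled $2$. Proposition~\ref{TriangleArtin} says isomorphic triangular even Artin groups have the same number of edges labeled $2$. Since $0\neq 1$, we conclude $A_\Gamma\not\cong A_\Delta$.

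To make the argument independent of the bookkeeping in that proposition, I would also check the invariant $\pi_1(\Sigma^1)$ directly for these two graphs.

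\textbf{Why $\Sigma^1$ distinguishes the groups.} An isomorphism $A_\Gamma\to A_\Delta$ induces a homeomorphism of character spheres $S(A_\Delta)\to S(A_\Gamma)$. This homeomorphism carries $\Sigma^1(A_\Delta)$ onto $\Sigma^1(A_\Gamma)$, because $\Sigma^1$ is independent of the chosen finite generating set. Consequently the fundamental groups of the two $\Sigma^1$'s would have to be isomorphic.

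\textbf{Computing the removed points.} I would use the lemma that $[\chi]\in\Sigma^1$ if and only if $\mathrm{Liv}^\chi$ is connected.
\begin{itemize}
\item For $\Gamma$, all three labels are at least $4$. The list in the proof of Proposition~\ref{TriangleArtin} gives $12$ points of $\mathbb S^2$ where $\mathrm{Liv}^\chi$ is disconnected, so $\pi_1(\Sigma^1(A_\Gamma))\cong F_{11}$.
\item For $\Delta$, the edge $\{v_1,v_2\}$ has label $2$, so it is never deleted. I would recount the bad classes of characters.
\begin{itemize}
\item Case $\chi(v_3)=0$: then $v_1$ and $v_2$ survive and stay joined, so $\mathrm{Liv}^\chi$ is connected.
\item Case $\chi(v_1)=0$: we need $\chi(v_2)+\chi(v_3)=0$, giving $2$ rays.
\item Case $\chi(v_2)=0$: symmetrically, $2$ rays.
\item Case all $\chi(v_i)$ nonzero: the graph is disconnected only if both edges at $v_3$ are deleted. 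That forces $\chi(v_3)=-\chi(v_1)=-\chi(v_2)$, giving $2$ rays.
\end{itemize}
This yields $6$ points, so $\pi_1(\Sigma^1(A_\Delta))\cong F_5$.
\end{itemize}

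Since $F_{11}\not\cong F_5$, the groups $A_\Gamma$ and $A_\Delta$ are not isomorphic.

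\textbf{Main obstacle.} The only step needing care is the recount of disconnected cases for the graph with one edge labeled $2$. In particular one must confirm that the degenerate characters (two coordinates zero) are already included among the listed rays or give connected graphs. That is a finite check on $\mathbb S^2$.
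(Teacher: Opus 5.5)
Your proposal is correct and is the paper's argument: the corollary follows immediately from Proposition~\ref{TriangleArtin}, because $\Gamma$ has no edge labeled $2$ while $\Delta$ has exactly one. Your direct recount of the six removed points for $\Delta$, which gives $\pi_1(\Sigma^1(A_\Delta))\cong F_5$ against $F_{11}$ for $\Gamma$, is accurate and reproduces case (1) of that proposition's proof.
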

However, even though the cohomology ring does not completely determine even Artin groups, it does give an obvious invariant: the number of spherical subgraphs of a given degree.

\begin{proposition}
Let $A_\Gamma$ and $A_\Delta$ be two even Artin groups with $A_\Gamma \cong A_\Delta$. Then the number of spherical subgraphs on $n$ vertices is the same in both $\Gamma$ and $\Delta$. In particular, $\lvert E(\Gamma)\rvert = \lvert E(\Delta)\rvert$.
\end{proposition}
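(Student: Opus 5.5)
The plan is to read off the count directly from the additive structure of cohomology given by Lemma~\ref{CohomologyEven}. An isomorphism $A_\Gamma\cong A_\Delta$ induces isomorphisms $H^n(A_\Gamma,R)\cong H^n(A_\Delta,R)$ for every $n$ and every coefficient ring $R$. Take $R=\mathbb{Z}$ (or, to avoid any subtlety, $R=\mathbb{Q}$). By Lemma~\ref{CohomologyEven}, $H^n(A_\Gamma,\mathbb{Z})$ is the free abelian group with basis $\{\sigma_X\mid X\subseteq\Gamma \text{ spherical},\ |V(X)|=n\}$. Hence its rank equals the number $s_n(\Gamma)$ of spherical subgraphs of $\Gamma$ on $n$ vertices. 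The same holds for $\Delta$. Since ranks of free abelian groups (equivalently, $\mathbb{Q}$-dimensions after tensoring) are isomorphism invariants, we get $s_n(\Gamma)=s_n(\Delta)$ for all $n$.

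For the statement about edges, I would observe that a subgraph on two vertices is spherical exactly when its two vertices span an edge of $\Gamma$. If they are joined by an edge with finite label $2m$, the Coxeter group is dihedral of order $4m$. If they are not joined, the Coxeter group is $\mathbb{Z}/2*\mathbb{Z}/2$, which is infinite. Thus $s_2(\Gamma)=|E(\Gamma)|$, and the equality $|E(\Gamma)|=|E(\Delta)|$ follows as the case $n=2$. Similarly $s_1$ recovers the number of vertices, which is consistent with $H^1$ being the abelianization dual.

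There is essentially no obstacle here. The only points to be careful about are that Lemma~\ref{CohomologyEven} applies to both groups, which holds because both are even and therefore satisfy the $K(\pi,1)$-conjecture, and that we compare ranks rather than ring structures, so no compatibility with the bases $\sigma_X$ is needed.
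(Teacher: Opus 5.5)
Your proposal is correct and follows essentially the same route as the paper: compare ranks of $H^n(\cdot,\mathbb{Z})$, which by Lemma~\ref{CohomologyEven} count the spherical subgraphs on $n$ vertices. Your explicit check that two-vertex spherical subgraphs are exactly the edges, which gives $|E(\Gamma)|=|E(\Delta)|$ from the case $n=2$, is a detail the paper leaves implicit.
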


\begin{proof}
If $A_\Gamma \cong A_\Delta$, then $H^\bullet(A_\Gamma,\mathbb{Z}) \cong H^\bullet(A_\Delta,\mathbb{Z})$, and $\dim_{\mathbb{Z}} H^n(A_\Gamma,\mathbb{Z}) = \dim_{\mathbb{Z}} H^n(A_\Delta,\mathbb{Z})$ for all $n \geq 1$, so the result follows.
\end{proof}
\section{\texorpdfstring{$p$}--Restrcited Zassenhauss Lie algebras of Artin groups}\label{sec:central}


Given a group $G$, there are several algebraic structures that can be defined in terms of $G$ and give important information about the group. One is, for example, the cohomology ring that we have considered above. But probably the most basic such structure is the group ring $RG$ where $R$ is a ring, typically commutative and unital. In this paper we are going to see group rings as filtered rings, using the natural descending filtration by powers of the augmentation ideal $\omega_R$ which is the kernel of the map $RG\to R$ that sends all the group elements to 1:
\begin{equation}\label{eq:filtRG}
    RG=\omega_R^0\geq\omega_R\geq\omega_R^2\geq\ldots\geq\omega_R^i\geq\ldots
\end{equation}


This filtration is closely related to a filtration of the group $G$ itself. The descending central series $\gamma_i(G)$ is defined inductively by $\gamma_1(G)=G$ and $\gamma_i(G)=[G,\gamma_{i-1}(G)]$ and yields the {\sl Magnus} Lie ring
$$\gr_\bullet(G)=\bigoplus_{i\geq 1}\gamma_i(G)/\gamma_{i+1}(G).$$
The terms of $\gr_\bullet(G)$ can have torsion, but it is possible to define a similar torsion-free Lie ring using the {\sl isolator} central series instead, defined by
$$\gamma_i^{[0]}(G)=\{x\in G\mid x^m\in\gamma_i(G)\text{ for some }m>0\}.$$
It is well-known that the quotients $\gamma^{[0]}_i(G)/\gamma^{[0]}_{i+1}(G)$ are torsion-free, and from this one deduces that 
$$\gr^{[0]}_\bullet(G):=\bigoplus_{i\geq 1}(\gamma^{[0]}_i(G)/\gamma^{[0]}_{i+1}(G))\otimes\Q=\gr_\bullet(G)\otimes_\Z\Q.$$

For each prime $p$ there are also a $p$-local versions, like the series 
$\lambda_i(G)=[\lambda_{i-1}(G),G]\lambda_{i-1}(G)^p$. In this paper we are going to consider
the series
$$\gamma_i^{[p]}(G)=\prod_{jp^s\geq i}\gamma_j(G)^{p^s}$$
that was first introduced by Zassenhaus in 1939.

The relationship between these central series and the filtration (\ref{eq:filtRG}) is the following. The {\sl dimension subgroups} of $G$ with coefficients in $R$ are the groups
$$D_i^R(G)=\{g\in G\mid 1-g\in w_R^i\}.$$
These groups form a central series and in particular, $D_1^R(G)=G$. In the case when $R=\Z$ we have $\gamma_i(G)\leq D^\Z_i(G)$ but in general this inequality can be strict \cite{Rips} and the dimension subgroups $D^\Z_i(G)$ are rather mysterious. However, as Sjogren showed in \cite{Sjogren}, the quotient $D^\Z_i(G)/\gamma_i(G)$ is an abelian group of bounded torsion. Bartholdi and Mikhailov have recently constructed examples of groups where the quotient $D^\Z_i(G)/\gamma_i(G)$ contains any prescribed abelian group of bounded exponent.
However, the situation changes if we consider coefficients in a field:

\begin{theorem}[Quillen-Jennings-Lazard] Let $p$ be zero or a prime and $K=\Q$ if $p=0$, $K=\F_p$ in other case. Then
$$\gamma_i^{[p]}(G)=D_i^{K}(G),$$
\end{theorem}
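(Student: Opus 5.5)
The plan is to prove the two inclusions $\gamma_i^{[p]}(G)\subseteq D_i^K(G)$ and $D_i^K(G)\subseteq\gamma_i^{[p]}(G)$ separately. The first is formal; the second requires identifying the graded Lie object of the dimension filtration inside the graded algebra $\gr_\bullet(KG)=\bigoplus_i\omega_K^i/\omega_K^{i+1}$.

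For the easy inclusion, I first check that $(D_i^K(G))_i$ is an $N$-series (respectively an $N_p$-series). It is a central series because of the identities
$$1-[g,h]=g^{-1}h^{-1}\bigl((1-g)(1-h)-(1-h)(1-g)\bigr)$$
and $1-gh=(1-g)+(1-h)-(1-g)(1-h)$; these give $[D_i^K,D_j^K]\subseteq D_{i+j}^K$. When $K=\F_p$ we also have $1-g^p=(1-g)^p$, so $(D_i^K)^p\subseteq D_{ip}^K$. When $K=\Q$, the relation $g^m\in D_i^\Q$ forces $g\in D_i^\Q$: write $1-g^m=m(1-g)+O\bigl((1-g)^2\bigr)$, argue by induction on the filtration degree of $1-g$, and use that $m$ is invertible. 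Since $\gamma_i^{[p]}(G)$ is generated by the $\gamma_j(G)^{p^s}$ with $jp^s\ge i$ (respectively it is the isolator of $\gamma_i$), and $\gamma_j\subseteq D_j^K$ by induction using the commutator identity, we get $\gamma_i^{[p]}\subseteq D_i^K$.

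For the reverse inclusion I follow Quillen's argument. Set $L=\bigoplus_i\gamma_i^{[p]}/\gamma_{i+1}^{[p]}$, which is a restricted Lie algebra over $\F_p$ via the $p$-power map (respectively, for $p=0$, a Lie algebra after tensoring with $\Q$). The map $g\mapsto g-1$ induces a homomorphism $\theta:U(L)\to\gr_\bullet(KG)$, where $U$ is the restricted enveloping algebra when $p>0$. This map is surjective, since $\gr(KG)$ is generated in degree one by the images of $g-1$. The key claim is that $\theta$ is injective; in fact it suffices that $\theta$ is injective on $L\subseteq U(L)$. Granting this, suppose $g\in D_i^K\setminus\gamma_i^{[p]}$. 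Then $g\in\gamma_j^{[p]}\setminus\gamma_{j+1}^{[p]}$ for some $j<i$, so the class of $g$ in $L_j$ is nonzero, while its image $g-1$ lies in $\omega^{j+1}$ and hence is zero in degree $j$. This contradicts injectivity.

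The main obstacle is showing that $L\to\gr(KG)$ is injective. I would first reduce to finitely generated nilpotent quotients $G/\gamma_n^{[p]}$, noting that everything is compatible with passing to these quotients in the relevant degrees. I would then construct a filtered module on which $G$ acts with the expected associated graded, following Jennings: choose a basis of $G/\gamma_n^{[p]}$ adapted to the series, in which elements $x_k$ have weights $w_k$. Next, show that the ordered monomials $\prod_k(x_k-1)^{e_k}$, with $0\le e_k<p$ in the modular case, span $KG/\omega^n$, and that their number equals $\dim U(L)_{<n}$, which is given by the PBW theorem. A dimension count, using $\dim KG/\omega^n$ computed via the group order for finite $p$-groups or via Hirsch length and Malcev completion for $p=0$, then forces these monomials to be linearly independent. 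The crucial input for this count is PBW for (restricted) enveloping algebras; the counting step is the delicate part.
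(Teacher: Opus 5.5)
The paper does not prove this theorem; it quotes it from Jennings, Lazard, Hall and Quillen. So your proposal has to stand on its own, and its first half does.

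\textbf{What works.} Your identities make $(D_i^K)$ an $N$-series, and an $N_p$-series when $K=\F_p$. The congruence $1-g^m\equiv m(1-g)$ modulo $\omega^{j+1}$ shows that $D_i^{\Q}$ is isolated. Together these give $\gamma_i^{[p]}(G)\subseteq D_i^K(G)$. The reduction of the reverse inclusion to injectivity of $L_j\to\omega^j/\omega^{j+1}$ is also correct.

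There is one small omission. $G/\gamma_n^{[p]}(G)$ is finitely generated only when $G$ is. You should first replace $G$ by the subgroup $H$ generated by $g$ and the finitely many elements occurring in an expression of $g-1$ as an element of $\omega^i$. Then $g\in D_i^K(H)$, and $\gamma_i^{[p]}(H)\subseteq\gamma_i^{[p]}(G)$.

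\textbf{The gap.} The gap is the counting step, which is where all the content of the theorem lies.

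That the monomials of weight $<n$ span $K\bar G/\omega^n$ only gives $\dim K\bar G/\omega^n\le\dim U(L)_{<n}$. This is exactly what surjectivity of $\theta$ already tells you. Injectivity needs the reverse inequality, and PBW cannot supply it, since it merely counts the monomials.

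The group order cannot supply it either: it computes $\dim K\bar G$, not $\dim K\bar G/\omega^n$. For $G=\Z$, $n=2$ and $p$ odd, $\bar G=\Z/p$ has order $p$, but $\dim\F_p\bar G/\omega^2=2$. Likewise $\dim\Q\bar G/\omega^n$ grows with $n$, while the Hirsch length does not.

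What is missing is a lower bound for $\dim K\bar G/\omega^n$, i.e.\ the ``filtered module'' you mention but never build. In Jennings' proof it is $K\bar G$ itself, filtered by the span $J_m$ of the monomials of weight $\ge m$. The real work is a collection argument proving $J_aJ_b\subseteq J_{a+b}$, whence $\omega^n=J_1^n\subseteq J_n$.

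\textbf{Repair in characteristic $p$.} Here your idea can be rescued by counting globally rather than below degree $n$. Let $\bar L=\gr^{[p]}_\bullet(\bar G)$. Since $\bar G$ is a finite $p$-group, its augmentation ideal is nilpotent, so $\sum_m\dim\gr_m(\F_p\bar G)=|\bar G|$. Restricted PBW gives $\dim U(\bar L)=p^{\dim\bar L}=|\bar G|$. As $\theta$ is surjective in each degree and the total dimensions agree, $\theta$ is bijective in every degree.

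\textbf{Repair for $p=0$.} No such count exists, since $\Q\bar G$ is infinite dimensional and $\omega$ is not nilpotent. You need an actual construction. One option is the map $g\mapsto\exp(\log g)$ from $\bar G$ into the completed enveloping algebra of its Malcev Lie algebra $\mathfrak g$. Using $\gr\mathfrak g\cong\gr_\bullet(\bar G)\otimes\Q$, this makes $\Q\bar G/\omega^n$ surject onto a space of dimension $\dim U(\gr_\bullet(\bar G)\otimes\Q)_{<n}$. The other option is Quillen's Hopf-algebraic argument.
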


 In the case of prime characteristic, this result was first shown by Jennings for finite $p$ groups  \cite{JenningsP} and by Lazard for arbitrary groups  \cite{Lazard}. 
Jennings also claimed the case of characteristic zero  \cite{Jennings0}, but he never published a proof. Later, Hall gave a proof that he attributes to Jennings and that can be read in the notes by Pengitore \cite{Hall}. Both cases where also shown independently  and with completely different methods by Quillen in \cite{Quillen}.

As we did for $\gamma_i(G)$ and  $\gamma_i^{[0]}(G)$, using $\gamma_i^{[p]}(G)$ we may define a Lie algebra
$$\gr^{[p]}_\bullet(G)=\bigoplus_{i\geq 1}\gamma^{[p]}_i(G)/\gamma^{[p]}_{i+1}(G).$$
This algebra has extra structure: it is $p$-restricted. Recall that a $p$-{\sl restricted Lie algebra} is a Lie algebra $L$ over a field $F$ of characteristic $p$ with an extra operation $\mathfrak{a}\mapsto\mathfrak{a}^{[p]}$ such that for any $\mathfrak{a},\mathfrak{b}\in L$, $t\in F$:
\begin{itemize}
\item[i)] $[\mathfrak{a}^{[p]},\mathfrak{b}]=[\mathfrak{a},\buildrel{p}\over\ldots,\mathfrak{a},\mathfrak{b}]$,

\item[ii)] $(t\mathfrak{a})^{[p]}=t^p\mathfrak{a}^{[p]}$,

\item[iii)] $(\mathfrak{a}+\mathfrak{b})^{[p]}=\mathfrak{a}^{[p]}+\mathfrak{b}^{[p]}+\sum\limits_{i=1}^{p-1}s_i(\mathfrak{a},\mathfrak{b})$ where $s_i(\mathfrak{a},\mathfrak{b})$ is $\frac{1}{i}$ times the coefficient of $t^{i-1}$ in $[t\mathfrak{a}+\mathfrak{b},\buildrel{p-1}\over\ldots,t\mathfrak{a}+\mathfrak{b},\mathfrak{a}]$.
\end{itemize}

If $A_\Gamma$ is a RAAG, the algebras $\gr_\bullet(A_\Gamma)$ and $\gr^{[p]}_\bullet(A_\Gamma)$ are precisely the objects given by the analogous presentation as the group but in the corresponding category (in the case of the Magnus Lie ring this was first shown by Duchamp and Krob, see \cite{DuchampKrob}, \cite{BHS}). To avoid confusions, we will use fraktur letters to denote the vertices of $V(\Gamma)$ when we are denoting generators of a Lie algebra. So we have
$$\gr_\bullet(A_\Gamma)=\langle \mathfrak{a}\in V(\Gamma)\mid [\mathfrak{a},\mathfrak{b}]=0\text{ if }\{\mathfrak{a},\mathfrak{b}\}\text{ form an edge of }\Gamma\rangle$$
and $\gr^{[p]}_\bullet(A_\Gamma)$ has the same presentation but in the category of $p$-restricted Lie algebras. In the proof of these results, the fact that the terms of the series $\gamma_i(A_\Gamma)/\gamma_{i+1}(A_\Gamma)$ are all torsion-free plays an important role and as we will see later, this is no longer true for arbitrary Artin groups.

The simplest possible family of Artin groups which are not RAAGs are dihedral groups, i.e., Artin groups $G=A_e$ where the defining graph has a single edge $e$. If the edge has an odd label, then the abelianization of $G$ has rank one and in fact by Proposition 1 of \cite{BGG} we have
$$\gamma_i(G)=\gamma_2(G)$$
for any $i$, so the Magnus Lie ring is just $\Z$ concentrated at degree 1.
 If the label of the edge is an even number $2m$, it seems difficult to determine the Magnus Lie algebra $\gr_\bullet(G)$. The problem is related to the existence of torsion in $\gr_2(G)=\gamma_2(G)/\gamma_3(G)$: putting  $s=ab$, the relator $(ab)^m=(ba)^m$ can be rewritten as
$$as^ma^{-1}=s^m$$
and therefore
$$G=\langle a,s\mid (s^m)^a=s^m\rangle=BS(m,m),$$
and $1=[s^m,a]\equiv [s,a]^m$ modulo $\gamma_3(G)$. To see how  presence of torsion in $\gr_2(G)$ can make the Magnus Lie ring complicated,  see  \cite{Labute1} for a presentation of the Lie ring  $\gr_\bullet(\Z\ast\F_p)$ for $p$ a prime. In fact the algebra $\gr_\bullet(\Z\ast\F_p)$ is closely related to $\gr_\bullet(G)$: Bardakov and Neshadim show

\begin{theorem}\cite[Proposition 2]{BardakovNeshchadim}\label{teo:BN} Let $G=\langle a,b\mid (ab)^m=(ba)^m\rangle$ be an even dihedral Artin group. Then the terms of the descending Lie ring $\gr_\bullet(G)$ of $G$ are
$$\gr_1(G)=\Z^2,$$
$$\gr_i(G)=\gr_i(\Z\ast\Z_m)\text{ for }i\geq 2.$$
\end{theorem}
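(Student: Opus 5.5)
The plan is to compare $G$ with the group $H=\langle a,s\mid [s^m,a]=1\rangle=BS(m,m)$ (with $s=ab$, as observed above) through the central extension
$$1\to Z\to G\to \Z\ast\Z_m\to 1,$$
where $Z=\langle s^m\rangle$ and the quotient $G/\langle s^m\rangle=\langle a,s\mid s^m\rangle\cong\Z\ast\Z_m$. The element $s^m=(ab)^m$ is central in $G$: it commutes with $a$ by the defining relation, and with $s$ trivially. Hence $Z$ is a central subgroup, infinite cyclic (it maps onto $m\Z$ under the exponent-sum map $G\to\Z$ sending $a,b\mapsto 1$). The quotient map $\pi\colon G\to\Z\ast\Z_m$ is surjective, so $\pi(\gamma_i(G))=\gamma_i(\Z\ast\Z_m)$ for all $i$. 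It therefore induces surjections $\gr_i(G)\to\gr_i(\Z\ast\Z_m)$, whose kernels are $(Z\cap\gamma_i(G))\gamma_{i+1}(G)/\gamma_{i+1}(G)$. The whole proof is to show these kernels vanish for $i\ge2$, that is,
$$Z\cap\gamma_i(G)\subseteq\gamma_{i+1}(G)\quad\text{for } i\ge 2.$$

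The key step is to show $Z\cap\gamma_2(G)=1$. Since $\gamma_2(G)$ lies in the kernel of the abelianization $G\to\Z^2$, it suffices to check that $s^{mk}$ has nonzero image in $G^{ab}=\Z^2$ (generated by the images of $a,b$) for every $k\ne0$. Its image is $(mk,mk)\ne0$. So $Z\cap\gamma_2(G)=1$, hence $Z\cap\gamma_i(G)=1$ for all $i\ge2$. Consequently $\pi$ restricts to isomorphisms $\gamma_i(G)\to\gamma_i(\Z\ast\Z_m)$ for $i\ge 2$, compatible with the inclusions, and so it induces isomorphisms $\gamma_i(G)/\gamma_{i+1}(G)\cong\gamma_i(\Z\ast\Z_m)/\gamma_{i+1}(\Z\ast\Z_m)$ for $i\ge2$.

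The degree-one statement is just the abelianization: the relator $(ab)^m(ba)^{-m}$ has zero exponent sum in each generator, so $\gr_1(G)=G^{ab}=\Z^2$. Note that here $\gr_1$ differs from $\gr_1(\Z\ast\Z_m)=\Z\oplus\Z_m$, consistent with the statement only claiming the isomorphism for $i\ge2$.

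I expect no real obstacle. The only points needing care are that $Z$ is central and that it meets $\gamma_2(G)$ trivially; both are settled by the exponent-sum computation above. One should also confirm the presentation of the quotient: adding $s^m=1$ to $\langle a,s\mid [s^m,a]\rangle$ gives $\langle a,s\mid s^m\rangle=\Z\ast\Z_m$, since the commutator relation becomes redundant.
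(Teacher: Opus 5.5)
Your proof is correct and complete. The element $s^m=(ab)^m$ is central and $G/\langle s^m\rangle\cong\Z\ast\Z_m$. Since $s^{mk}\mapsto(mk,mk)\neq 0$ in $G^{ab}\cong\Z^2$, the kernel $Z$ meets $\gamma_2(G)$ trivially. Hence $\pi$ restricts to isomorphisms $\gamma_i(G)\to\gamma_i(\Z\ast\Z_m)$ for $i\geq 2$, and these carry $\gamma_{i+1}(G)$ onto $\gamma_{i+1}(\Z\ast\Z_m)$. One harmless slip: under $a,b\mapsto 1$ you get $s^m\mapsto 2m$, so $Z$ maps onto $2m\Z$, not $m\Z$. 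Infinite cyclicity of $Z$ is never used anyway.

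The paper does not prove this statement; it quotes Bardakov--Neshchadim. Your route is the same central extension $1\to\langle s^m\rangle\to G\to\Z\ast\Z_m\to 1$ that the paper uses to prove Proposition~\ref{prop:pZdihedral}. The contrast is instructive. For the $p$-Zassenhaus filtration, $Z\cap\gamma_i^{[p]}(G)=\langle s^{p^{\max\{t,t(i)\}}}\rangle$ is never trivial, because $p$-th powers of $s$ enter the filtration. So the paper must compute the kernel $K_\bullet$ degree by degree and match it with the central ideal generated by $\mathfrak{s}^{[p]^t}$. For the lower central series, your abelianization check gives $Z\cap\gamma_i(G)=1$ for all $i\geq 2$. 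That is exactly why the two Lie rings agree in every degree $\geq 2$ and differ only in degree $1$: $\Z^2$ versus $\Z\oplus\Z_m$.
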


To avoid these kind of problems, one may consider the rationalized version
$\gr_\bullet(G)\otimes_\Z\Q$ but it turns out to be not very interesting. For example, for the dihedral Artin group above, it is easy to check that all the quotients $\gamma_{i-1}(G)/\gamma_{i}(G)$ are torsion for $i\geq 2$ (basically, because $\gamma_2(G)/\gamma_3(G)$ is) and therefore 
$\gr_\bullet(G)\otimes_\Z\Q$ is just $\Q$ concentrated in degree 1. In fact, it follows from results in \cite{KM} that for any Artin group $A_\Gamma$, $\gr_\bullet(A_\Gamma)\otimes_\Z\Q$ is the $\Q$-Lie algebra generated by $V(\Gamma)$ subject to the relations
$$\begin{aligned}
 [\mathfrak{a},\mathfrak{b}]=0\text{ if }\{\mathfrak{a},\mathfrak{b}\}
\text{ is an edge of $\Gamma$ of even label},\\
 \mathfrak{a}=\mathfrak{b}\text{ if }\{\mathfrak{a},\mathfrak{b}\}
\text{ is an edge of $\Gamma$ of odd label}.\\
\end{aligned}$$ 
This means that $\gr^{[0]}_\bullet(A_\Gamma)$ is precisely the rational Magnus Lie algebra of a quotient of $A_\Gamma$: the RAAG with defining graph the graph that one obtains from $\Gamma$ by identifying all the endpoints of edges with odd label, removing double edges, and labeling all the remaining edges with a 2. So we see that a lot of the information of the group is killed when we tensor with $\Q$.

The $p$-Zassenhaus series however can be seen as a local object, and therefore one can expect it to keep $p$-local info. This is precisely the case, as we will prove next. We will consider first the case of dihedral Artin groups and then we will extend to other Artin groups using the following result that generalizes a Theorem of Lichtmann for free products.

\begin{theorem}\cite{LMPW}\label{teo:Giorgio}  Let $p$ be either zero or a prime and $G_1,G_2$ be groups with a common subgroup $H\leq G_1,G_2$ which is strictly $p$-embedded in both.
Then 
$$\gr_\bullet^{[p]}(G_1\ast_HG_2)=\gr_\bullet^{[p]}(G_1)\ast_{\gr_\bullet^{[p]}(H)}\gr_\bullet^{[p]}(G_2).$$
\end{theorem}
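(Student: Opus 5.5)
The plan is to pass from groups to filtered group algebras, use the Quillen--Jennings--Lazard theorem to identify associated graded algebras with (restricted) universal enveloping algebras, and then do the amalgamation at the level of algebras, where normal forms are available. Let $K=\Q$ if $p=0$ and $K=\F_p$ otherwise. Write $G=G_1\ast_H G_2$, and let $L_1,L_2,L_H$ be $\gr^{[p]}_\bullet(G_1),\gr^{[p]}_\bullet(G_2),\gr^{[p]}_\bullet(H)$. By Quillen's theorem,
$$\gr_\bullet(KG)\cong U(\gr^{[p]}_\bullet(G)),$$
where $\gr$ is taken with respect to powers of the augmentation ideal and $U$ is the restricted enveloping algebra when $p>0$. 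Moreover, $\gr^{[p]}_\bullet(G)$ is recovered inside $\gr(KG)$ as the (restricted) Lie subalgebra generated by the classes of the elements $g-1$ (equivalently, via the primitive elements). Since $U(L_1\ast_{L_H}L_2)=U(L_1)\ast_{U(L_H)}U(L_2)$ and $L_1\ast_{L_H}L_2$ embeds in its enveloping algebra by PBW, it suffices to prove the ring-level statement
$$\gr_\bullet(KG)\cong \gr_\bullet(KG_1)\ast_{\gr_\bullet(KH)}\gr_\bullet(KG_2),$$
compatibly with the natural maps from $G_1$ and $G_2$. Comparing the Lie subalgebras generated by the images of $G_1$ and $G_2$ then gives the theorem.

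\textbf{Step 1 (use of strict $p$-embedding).} I interpret strict $p$-embeddedness as saying that $\gamma^{[p]}_i(G_j)\cap H=\gamma^{[p]}_i(H)$ and, more strongly, that $KG_j$ admits a filtered basis as a free right $KH$-module. Concretely, I want elements $\{1\}\cup\{x_\alpha\}$ in the augmentation ideal of $KG_j$ with the following property: their leading terms form a basis of $\gr(KG_j)$ as a free $\gr(KH)$-module, and $\deg(x_\alpha h)=\deg x_\alpha+\deg h$ for the filtration degree. This is the filtered analogue of choosing a transversal. At the graded level it says that $U(L_j)$ is free over $U(L_H)$, which PBW gives as soon as $L_H\to L_j$ is injective.

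\textbf{Step 2 (normal forms).} The group algebra of $G$ is the ring amalgam $KG_1\ast_{KH}KG_2$. Every element of $KG$ can be written uniquely as a sum of alternating words $x_{\alpha_1}x_{\alpha_2}\cdots x_{\alpha_r}h$, with the $x_{\alpha_k}$ taken alternately from the chosen bases of $KG_1$ and $KG_2$ and $h\in KH$. Define a filtration $F_n$ by letting $F_n$ be the span of those words with $\sum_k\deg x_{\alpha_k}+\deg h\ge n$. By construction, $\gr_F(KG)$ is the algebra amalgam of the graded pieces: its normal forms are exactly those of $\gr(KG_1)\ast_{\gr(KH)}\gr(KG_2)$.

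\textbf{Step 3 (main obstacle): $F_n=\omega^n$.} One inclusion is easy. The augmentation ideal $\omega$ is generated by $\omega_{G_1}$ and $\omega_{G_2}$, so every word of total degree $\ge n$ lies in $\omega^n$, which gives $F_n\subseteq\omega^n$. For the reverse inclusion I would show that $F$ is multiplicative, i.e. $F_mF_n\subseteq F_{m+n}$, and that $\omega\subseteq F_1$; together these force $\omega^n\subseteq F_n$. Multiplicativity is the hard part. When two adjacent letters from the same factor are multiplied, the product must be re-expanded in the basis, and one must check that the degree does not drop. This is exactly where the additivity $\deg(x_\alpha h)=\deg x_\alpha+\deg h$ from Step 1 and the condition $\omega^n_{G_j}\cap KH=\omega^n_H$ are needed. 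I would argue by induction on word length, rewriting the product in the factor and then pushing the $KH$-part to the right. Once $F=\omega$ is established, Step 2 gives the ring-level isomorphism, and the reduction in the first paragraph gives the stated decomposition of $\gr^{[p]}_\bullet(G_1\ast_HG_2)$.
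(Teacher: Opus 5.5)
\textbf{Gap: Step 1 does not follow from the hypothesis and is false in general.} You call it an interpretation, but it is an additional claim. Strict $p$-embeddedness only says $H\cap\gamma_i^{[p]}(G_j)=\gamma_i^{[p]}(H)$. Via Quillen--Jennings--Lazard and PBW, this gives $\omega_{G_j}^n\cap KH=\omega_H^n$ and freeness of $\gr(KG_j)$ over $\gr(KH)$. It does \emph{not} give a right $KH$-basis $\{1\}\cup\{x_\alpha\}$ of $KG_j$ with $\omega_{G_j}^n=\omega_H^n\oplus\bigoplus_\alpha x_\alpha\omega_H^{n-d_\alpha}$, and such a basis need not exist. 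Two examples show this.

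First, such a basis would force $\bigcap_n\omega_{G_j}^n=\bigoplus_\alpha x_\alpha\bigcap_n\omega_H^n$ (with $x_0=1$). Take $H=1$, $G_1=\Z/q$ with $p\nmid q$, and $K=\F_p$. Then $\omega_{G_1}=\omega_{G_1}^2$ and $\gr(KG_1)=K$. So the only admissible set is $\{1\}$, which does not span $KG_1$. This is already the free-product (Lichtman) case of the statement.

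Second, the failure persists for residually-$p$ groups and for $p=0$. Take $G_1=\Z=\langle t\rangle$ and $H=\langle t^q\rangle$ with $q\ge 2$ and $p\nmid q$. Here $\gamma_i^{[p]}(\Z)=p^k\Z$ and $\gamma_i^{[p]}(q\Z)=qp^k\Z$ with $k=\lceil\log_p i\rceil$, so $H$ is strictly $p$-embedded; for $p=0$, both sides vanish for $i\ge 2$. But $t^q-1\equiv q(t-1)$ modulo $\omega^2$, so $\gr(KH)\to\gr(KG_1)$ is an isomorphism. Again only $\{1\}$ is admissible, although $KG_1$ has rank $q$ over $KH$.

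So for $\Z\ast_{q\Z}\Z=\langle a,b\mid a^q=b^q\rangle$, which the theorem covers, your Steps 2 and 3 have nothing to start from. Graded freeness from PBW does not lift to $KG_j$ for two reasons. The algebra $KG_j$ is not complete, and its filtration need not be separated. Moreover, filtered freeness records how $H$ sits in $G_j$ beyond the graded data; in the second example $H$ is dense for the Zassenhaus topology.

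Step 3 is not where the difficulty lies. Once a basis as in Step 1 is granted, multiplicativity of $F$ is a routine induction on word length: the identity for $\omega_{G_j}^n$ above is exactly what re-expansion inside a single factor needs. So all the content of your argument sits in the unproved Step 1. The paper itself gives no proof; it imports the statement from [LMPW], where it generalizes Lichtman's theorem on free products.

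For $p$ prime and finitely generated $G_j$, you can rescue your approach by truncating first. Fix $n$. Strictness gives $H\cap\gamma_n^{[p]}(G_1)=\gamma_n^{[p]}(H)=H\cap\gamma_n^{[p]}(G_2)$. Hence $G$ maps onto $\bar G_1\ast_{\bar H}\bar G_2$, where $\bar G_j=G_j/\gamma_n^{[p]}(G_j)$ are finite $p$-groups. The kernel lies in $\gamma_n^{[p]}(G)$, and $\bar H$ remains strictly $p$-embedded in each $\bar G_j$. For finite $p$-groups, lifting a homogeneous basis does produce a filtered $K\bar H$-basis: independence follows from leading terms, and spanning from $|\bar G_j|=|\bar G_j:\bar H|\,|\bar H|$. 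Your Steps 2--3 then give the statement in degrees $<n$. For $p=0$, the example $q\Z\le\Z$ survives every such truncation, so that case needs a further idea.
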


Here, we say that a subgroup $H\leq G$ is {\sl strictly $p$-embedded} in $G$ is for any $i\geq 1$,
$$H\cap\gamma_i^{[p]}(G)=\gamma_i^{[p]}(H).$$
Retracts are strictly $p$-embedded for any $p$ so if $A_\Gamma$ is an even Artin group and $\Delta\subseteq\Gamma$, the group $A_\Delta$ is strictly $p$-embedded in $A_\Gamma$.
Unfortunately, this is no longer true if we drop the hypothesis that $A_\Gamma$ is even so our method can not be applied for arbitrary Artin groups.

Let $A_\Gamma$ be an Artin group. We denote by $\Gamma_p$ the graph obtained from $\Gamma$ as follows:

\begin{itemize}
    \item[(1)] Identify endpoints of edges with an odd label and remove the loops that one creates,
    
\item[(2)] Change all labels of the form $2kp^t$ with $k$ coprime to $p$ into $2p^t$.

\item[(3)] Identify all possible double edges to an edge with the smallest possible label between those being identified.
\end{itemize}



The graph $\Gamma_p$ can be seen as the even $p$-part of $\Gamma$ and it carries the basic $p$-local info of $A_\Gamma$. We are about to prove Theorem \ref{teo:prop}, which makes this explicit because it means that the graph $\Gamma_p$ gives precisely the pro-$p$ completion $\hat{G}=\varprojlim_{U\leq_pG}G$ of the Artin group $G=A_\Gamma$. 

 \bigskip

\noindent{\sl Proof of Theorem \ref{teo:prop} } Let $T$ the normal subgroup of $A_\Gamma$ generated by the elements of the form $ab^{-1}$ whenever $\{a,b\}\in E(\Gamma)$ has an odd label and $(ab)^{p^t}(ba)^{-p^t}$ whenever $\{a,b\}\in E(\Gamma)$ has label $2kp^t$ for $k$ coprime to $p$. Then 
$$A_{\Gamma_p}=A_\Gamma/T.$$
We claim that $T\leq \bigcap_{i\geq 1}\gamma_i^{[p]}(G)$. This will imply that $A_\Gamma$ and $A_{\Gamma_p}$ have the same pro-$p$ completion so we will get the result.

If $e=\{a,b\}\in E(\Gamma)$ has odd label, then $a$ and $b$ have the same image in the abelianization of $A_e$ so $ab^{-1}\in\gamma_2(A_e)$.
 By Proposition 1 of \cite{BGG}, $\gamma_r(A_e)=\gamma_2(A_e)$ for any $r\geq 2$. Therefore 
$$ab^{-1}\in\gamma_r(A_e)\leq\gamma_r(A_\Gamma)$$  
so for any $i$, choosing $r\geq i$ we have
$$ab^{-1}\in\gamma_r(A_\Gamma)\leq\prod_{jp^s\geq i}\gamma_j(A_\Gamma)^{p^s}=\gamma_i^{[p]}(A_\Gamma),$$  
i.e.
$$ab^{-1}\in  \bigcap_{i\geq 1}\gamma_i^{[p]}(G).$$  

Now, assume that $\{a,b\}\in E(\Gamma)$ has label $2kp^t$ for $k$ coprime to $p$. Let $\overline{G}=G/\gamma^{[p]}_i(G)$ which is a finite $p$-group of order, say, $p^r$, where $l$ depends on $i$. Choose integers $s,l$ such that $ks+lp^r=1$. Then in $\overline{G}$ we have
$$(\overline{a}\overline{b})^{p^t}=((\overline{a}\overline{b})^{ks})^{p^t}=(\overline{b}\overline{a})^{kp^ts}=((\overline{b}\overline{a})^{ks})^{p^t}=(\overline{b}\overline{a})^{p^t}$$
thus $(ab)^{p^t}(ba)^{-p^t}\in\gamma_i^{[p]}(G)$ for any $i$ thus $(ab)^{p^t}(ba)^{-p^t}\in  \bigcap_{i\geq 1}\gamma_i^{[p]}(G)$.
\qed

\begin{definition}
    We say that an Artin group $A_\Gamma$ is pro-$p$ FC if its  pro-$p$ completion is of type FC, meaning that the group $A_{\Gamma_p}$ is of type FC.
\end{definition}

As in the case of Magnus Lie algebras, $p$-Zassenhaus Lie algebras of dihedral Artin groups where the edge $e$ has odd label is not very interesting (see the proof of Theorem \ref{teo:prestricted} below) so we consider only the even case for now.

\begin{proposition}\label{prop:pZdihedral} Let $G=\langle a,b\mid (ab)^m=(ba)^m\rangle$ be an even dihedral Artin group. Put $m=p^tk$ with $k$ coprime with $p$. Then
$$\gr^{[p]}_\bullet(G)=\langle \mathfrak{a},\mathfrak{b}\mid[(\mathfrak{a}+\mathfrak{b})^{[p]^t},\mathfrak{a}]=0\rangle.$$
\end{proposition}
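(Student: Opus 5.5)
The plan is to use the rewriting $G=\langle a,s\mid [s^m,a]=1\rangle$ with $s=ab$, which exhibits $G=BS(m,m)$ as an HNN extension, or equivalently as an amalgam. Put $C=\langle s^m\rangle\times\langle a\rangle\cong\mathbb{Z}^2$. Then
\[
G\cong \langle s\rangle \ast_{\langle s^m\rangle} C .
\]
The first reduction is $p$-local. Inside the pro-$p$ completion, as in the proof of Theorem~\ref{teo:prop}, the element $s^{k}$ generates the same closed subgroup as $s$, and likewise $s^{m}$ and $s^{p^t}$ generate the same closed subgroup. Since the $p$-Zassenhaus algebra depends only on the pro-$p$ completion, we may replace $G$ by $G'=\langle a,s\mid [s^{p^t},a]=1\rangle=BS(p^t,p^t)$. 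To justify this, note that the normal closure of $[s^{p^t},a]$ lies in $\bigcap_i\gamma_i^{[p]}(G)$ by the same B\'ezout argument used above. Consequently $\gr^{[p]}_\bullet(G)=\gr^{[p]}_\bullet(G')$.

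Next I would apply Theorem~\ref{teo:Giorgio} to
\[
G'=\langle s\rangle\ast_{H}C',\qquad H=\langle s^{p^t}\rangle,\quad C'=\langle s^{p^t}\rangle\times\langle a\rangle .
\]
I need to check that $H$ is strictly $p$-embedded in both factors. In $\langle s\rangle\cong\mathbb{Z}$ we have $\gamma_i^{[p]}=\langle s^{p^{\lceil\log_p i\rceil}}\rangle$, and intersecting with $\langle s^{p^t}\rangle$ gives $\langle s^{p^{\max(t,\lceil\log_p i\rceil)}}\rangle$. This must equal $\gamma_i^{[p]}(H)$ with the degree shift accounted for. The shift is the subtle point. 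With $H$ graded intrinsically, $\gamma^{[p]}_i(H)=\langle (s^{p^t})^{p^{\lceil\log_p i\rceil}}\rangle$, which is not the intersection. So a strict embedding fails as stated, and I would instead use the version of Theorem~\ref{teo:Giorgio} with the filtration on $H$ induced from the factors. For $C'$ I must equip it with the weight on $s^{p^t}$ induced from $\langle s\rangle$, namely $p^t$, so that both factors induce the same filtration on $H$.

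I expect this compatibility of filtrations to be the main obstacle. I would handle it by viewing $C'$ with generator $x=s^{p^t}$ placed in degree $p^t$. Its associated restricted Lie algebra is then abelian on $\mathfrak{a}$ (degree $1$) and $\mathfrak{x}$ (degree $p^t$). Meanwhile $\gr^{[p]}(\langle s\rangle)$ is abelian restricted on $\mathfrak{s}$ with $\mathfrak{x}=\mathfrak{s}^{[p]^t}$. The amalgam would then be
\[
\langle \mathfrak{s},\mathfrak{a}\mid [\mathfrak{s}^{[p]^t},\mathfrak{a}]=0\rangle .
\]
To confirm this filtration is the correct one on $G'$, I would verify directly that $s^{p^t}$ has exact Zassenhaus degree $p^t$ in $G'$. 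This follows from the retraction $G'\to\mathbb{Z}$ sending $a\mapsto 1$ and $s\mapsto s$.

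Finally, the generators $a,b$ map in $\gr_1$ to $\mathfrak{a}$ and $\mathfrak{b}$ with $\mathfrak{s}=\mathfrak{a}+\mathfrak{b}$. Substituting gives the stated presentation $[(\mathfrak{a}+\mathfrak{b})^{[p]^t},\mathfrak{a}]=0$.
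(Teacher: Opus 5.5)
\textbf{Gap: the amalgam step.} Your reduction to $G'=BS(p^t,p^t)$ is the paper's own reduction, your amalgam decomposition $G'=\langle s\rangle\ast_{\langle s^{p^t}\rangle}C'$ is correct, and your final change of generators $\mathfrak{s}=\mathfrak{a}+\mathfrak{b}$ is fine. The problem is the step that carries all the content.

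As you note, $\langle s^{p^t}\rangle$ is not strictly $p$-embedded in $\langle s\rangle$, so Theorem~\ref{teo:Giorgio} does not apply. Reweighting $x=s^{p^t}$ into degree $p^t$ does not fix this. The filtration on $C'\cong\Z^2$ is then no longer its Zassenhaus filtration, which puts $x$ in degree $1$, so the theorem still does not apply.

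What you are actually invoking is an unstated ``weighted'' amalgam theorem. It would say: if $G_1,G_2$ carry restricted $N$-series inducing the same filtration on $H$, then the associated graded of the induced filtration on $G_1\ast_HG_2$ is $\gr G_1\ast_{\gr H}\gr G_2$. Nothing in the paper provides this, and it is not a formality. The surjection $\langle\mathfrak{s},\mathfrak{a}\mid[\mathfrak{s}^{[p]^t},\mathfrak{a}]=0\rangle\to\gr^{[p]}_\bullet(G')$ is easy. The entire difficulty is injectivity, i.e.\ showing that $\gr^{[p]}_\bullet(G')$ satisfies no further relations. Your retraction argument, that $s^{p^t}$ has exact degree $p^t$, only checks that the filtrations agree on $H$ and that the reweighted filtration on $G'$ is the Zassenhaus one. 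It says nothing about that injectivity.

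\textbf{How the paper avoids this.} The paper needs no weighted amalgam theorem. It quotients by the central subgroup $Z=\langle s^{p^t}\rangle$, so that $Q=G'/Z\cong\Z\ast\Z_{p^t}$ is a free product. There the case $H=1$ of Theorem~\ref{teo:Giorgio} (Lichtman's theorem) applies with no embedding condition, giving $\gr^{[p]}_\bullet(Q)\cong L_\bullet/I$, where $I$ is the central ideal generated by $\mathfrak{s}^{[p]^t}$.

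The kernel $K_\bullet$ of $\gr^{[p]}_\bullet(G')\to\gr^{[p]}_\bullet(Q)$ is computed using exactly your retraction $G'\to\langle s\rangle$. It is $\F_p$ in degrees $p^j$ with $j\ge t$ and $0$ otherwise, which matches $I_\bullet$ degree by degree. A five-lemma comparison then shows that $L_\bullet\to\gr^{[p]}_\bullet(G')$ is an isomorphism.

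You therefore need either to prove the weighted amalgam statement or to replace your amalgam step with this central-extension argument. You already have every ingredient for the latter.
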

\begin{proof} The group $G$ is the even dihedral Artin group associated to a graph with a single edge of label $2p^tk$.
If we denote by $G_p$ the even dihedral Artin group of an edge with label $2p^t$, then $G_p$ is the quotient of $G$ that we have considered in the proof of Theorem \ref{teo:prop}. In particular this means that $G$ and $G_p$ have the same $p$-restricted Zassenhaus Lie algebra, so we may assume $G=G_p$, i.e., we assume that $k=1$.

Set $s=ba$, and consider the presentation 
$$G=\langle a,s\mid [s^{p^t},a]=1\rangle.$$
The group $Z=\langle s^{p^t}\rangle$ is central and the quotient $Q=G/Z$ admits the following presentation
$$\langle a,s\mid s^{p^t}=1\rangle,$$
i.e., $Q\cong\Z\ast\Z_{p^t}$. As the abelianization of $G$ is the free abelian group generated by the cosets of $a$ and $s$, we have non trivial elements
$\bar{a}=a\gamma_2^{[p]}(G)$, $\bar{s}=s\gamma_2^{[p]}(G)$ in $\gr_1^{[p]}(G)$ such that $[\bar{s}^{p^t},\bar{a}]=0$. 


Now, let $L_\bullet$ be the $p$-restricted Lie algebra given by the presentation
$$L_\bullet=\langle \mathfrak{a},\mathfrak{s}\mid[\mathfrak{s}^{[p]^t},\mathfrak{a}]=0\rangle.$$

The previous remarks imply that the map $\mathfrak{a}\mapsto\bar{a}$, $\mathfrak{s}\mapsto\bar{s}$ extends to  a well defined homomorphism of $p$-restricted Lie algebras
$$\rho:L_\bullet\to\gr^{[p]}_\bullet(G).$$
Moreover, as $\bar{a}$ and $\bar{s}$ generate $\gr^{[p]}_\bullet(G)$ (because they generate the abelianization $G/G'$), $\rho$ is an epimorphism. We claim that $\rho$ is an isomorphism, this will imply the result because if we set $\mathfrak{b}=\mathfrak{s}-\mathfrak{a}$ in $L_\bullet$, then $L_\bullet$ has a presentation as in the statement.


Recall that $Z=\langle s^{p^t}\rangle$ and consider the short exact sequence
$$1\to Z\to G\to Q\to 1.$$
This induces a short exact sequence of $p$-restricted  Lie algebras
$$0\to K_\bullet\to \gr^{[p]}_\bullet(G) \to \gr^{[p]}_\bullet(Q)\to 0$$
where $K_\bullet$ has as degree $i$ term
$$K_i=Z\cap \gamma_i^{[p]}(G)/Z\cap \gamma_{i+1}^{[p]}(G).$$
The group $\langle s\rangle$ is a retract of $G$: to see it consider the morphism $G\to\langle s\rangle$
induced by $a\mapsto 1$, $s\mapsto s$. So it is strictly $p$-embedded thus for any $i\geq 1$ we have 
$$\langle s\rangle\cap\gamma_i^{[p]}(G)=\gamma_i^{[p]}(\langle s\rangle)=\langle s^{p^{t(i)}}\rangle$$
where $t(i)=\lceil \log_p(i)\rceil$ is the integer part by excess of the logarithm of $i$ in base $p$, equivalently, the smallest integer such that $p^{t(i)}\geq i$. 
Therefore
$$Z\cap \gamma_i^{[p]}(G)=Z\cap \langle s\rangle\cap\gamma_i^{[p]}(G)=Z\cap\langle s^{p^{t(i)}}\rangle=\langle s^{p^{\mathrm{max}\{t,t(i)\}}}\rangle$$
and
$$Z\cap \gamma_i^{[p]}(G)=\Bigg\{
\begin{aligned}
&\langle s^{p^t}\rangle\text{ if }i\leq p^t\\
&\langle s^{p^{t(i)}}\rangle\text{ otherwise}\\
\end{aligned}$$
thus $K_i=0$ for $0\leq i<p^t$ and for bigger values, $K_i=\F_p$ precisely if $i$ is a power of $p$ and $K_i=0$ otherwise.

Now, let $I$ be the ideal of $L_\bullet$ generated by the element $\mathfrak{s}^{[p]^t}$. Using $\rho$ we have a commutative diagram

\begin{equation}\label{eq:CD}
\begin{tikzcd}
        I \arrow[r, hook]\arrow[d,"\rho_1"] &L_\bullet \arrow[r,two heads]\arrow[d,"\rho"] &L_\bullet/I\arrow[d,"\rho_2"] \\
       K_\bullet \arrow[r, hook]&\gr_\bullet(G) \arrow[r,two heads]&\gr_\bullet(Q)\\
    \end{tikzcd}
\end{equation}

Moreover, it follows from the presentation of $L_\bullet$ that $I$ is central in $L_\bullet$ and the quotient admits the presentation
$$L_\bullet/I=\langle \mathfrak{a}, \mathfrak{s}\mid \mathfrak{s}^{[p]^t}=0\rangle.$$ 
This is precisely the free product of the Zassenhauss $p$-Lie algebras of the infinite cyclic group and of the group $\Z_{p^t}$. 
Using Theorem \ref{teo:Giorgio} (or Lichtmann's Theorem)
$$L_\bullet/I\cong\gr_\bullet^{[p]}(\Z)\ast\gr_\bullet^{[p]}(\Z_{p^t})\cong\gr_\bullet^{[p]}(\Z\ast\Z_{p^t})=\gr_\bullet^{[p]}(Q)$$
where the first isomorphism is induced by $\rho$. In other words, the map $\rho_2$ in the commutative diagram (\ref{eq:CD}) is an ismorphism. So to show that $\rho$ is an ismorphism we only have to show that  $\rho_1$ is. In fact, it is enough to show that $\rho_1$ is an isomorphism degree-wise.

The ideal $I$ is abelian and generated by a single element of  degree $p^t$. Taking this into account one easily sees that $I_i=L_i\cap I$ can be described as follows
$$I_i=\Bigg\{
\begin{aligned}
&\F_p\text{ if }i=p^s,s\geq t\\
&0\text{ otherwise}.\\
\end{aligned}$$
Therefore the map $\rho_1$ is indeed an isomorphism, as we wanted to prove.
 \end{proof}

 Using this Proposition we can proceed now to prove Theorem \ref{teo:prestricted}.  
 
  \bigskip
 
 \noindent{\sl Proof of Theorem \ref{teo:prestricted}.} Let $L_\Gamma$ denote the $p$-restricted Lie algebra 
 in the statement of Theorem \ref{teo:prestricted}.
 We claim that $L_\Gamma\cong\gr_\bullet^{[p]}(A_\Gamma)$. The proof of Theorem \ref{teo:prop} implies that there is an epimorphism
 $$A_\Gamma\to A_{\Gamma_p}$$
with kernel $T$ so that $T\leq\bigcap_{i\geq 1}\gamma_i^{[p]}(G)$ thus
$$\gr_\bullet^{(p)}(A_\Gamma)=\gr_\bullet^{(p)}(A_{\Gamma_p}).$$
So we may assume $\Gamma=\Gamma_p$, and in particular, we may assume that $\Gamma$ is even of FC-type. If $\Gamma$ is complete, then by the FC-type condition it must be a direct product of groups which are either dihedral even (with label of the form $2p^t$) of free abelian and the result follows by Proposition \ref{prop:pZdihedral}. In other case, we may find proper subgraphs $\Gamma_1,\Gamma_2\subsetneq$ such that $\Gamma=\Gamma_1\cup\Gamma_2$ and then
$$A_{\Gamma_1}\ast_{A_\Delta}A_{\Gamma_2}$$
for $\Delta=\Gamma_1\cap\Gamma_2$. Moreover, the fact that $\Gamma$ is even implies that both $A_{\Gamma_1}$ and $A_{\Gamma_2}$ are retracts of $A_\Gamma$ so the result follows by induction on the number of vertices of $\Gamma$ using Theorem \ref{teo:Giorgio}.\qed

\section{Residual properties and the proof of Theorem \ref{teo:isoproblem}}\label{sec:res}
If $\mathcal{P}$ is a group property, a group $G$ is said to be \emph{residually $\mathcal{P}$} if for every $1\neq g\in G$ there exists a quotient $G/N$ having property $\mathcal{P}$ such that $g\notin N$. Equivalently, for every $1\neq g\in G$ there exists a homomorphism $\varphi:G\to Q$, where $Q$ has property $\mathcal{P}$, such that $\varphi(g)\neq 1$. In particular, a group is residually-$p$ if and only if
$$\bigcap_i \gamma_i^{[p]}(G)=1.$$

The residual properties of Artin groups are not completely understood. It is conceivable that all Artin groups are residually finite \cite{BlascoParis2, Jankiewicz, Meier} and virtually residually-$p$ \cite{JS}.

Right-angled Artin groups are residually torsion-free nilpotent. However, it is a consequence of the following discussion that RAAGs are the only Artin groups with this property.

In Proposition~2 of \cite{BGG}, Bellingeri, Gervais, and Guaschi show that a dihedral Artin group is residually nilpotent if and only if the label is of the form $2p^t$ for some prime $p$. In that case, the group is isomorphic to the Baumslag--Solitar group $\mathrm{BS}(p^t,p^t)$, which is residually-$p$ (but not residually torsion-free nilpotent if $t>0$) \cite{Moldavanskii}. This implies that if an Artin group $A_\Gamma$ is residually-$p$, then necessarily it must be even and all the labels have to be of the form $2p^t$ for some $t$. Using the fact that being residually-$p$ is preserved under free amalgamated products along retracts \cite{BS}, we obtain the converse in the FC-type case:

\begin{lemma}\label{lem:resp}
Let $A_\Gamma$ be an even Artin group of FC-type such that all the labels in $\Gamma$ are of the form $2p^t$ for some $t$. Then $A_\Gamma$ is residually-$p$.
\end{lemma}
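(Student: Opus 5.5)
We argue by induction on the number of vertices of $\Gamma$, in the same way as the proof of Theorem~\ref{teo:prestricted}, with the Bellingeri--Gervais--Guaschi/Moldavanskii result as the base case and the result of \cite{BS} (residual $p$-finiteness is preserved by free products amalgamated along retracts) as the inductive step.

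\textbf{Base case: $\Gamma$ complete.} Since $A_\Gamma$ is of FC type, $\Gamma$ is spherical. As recalled in Section~\ref{sec:Artin}, the only even irreducible spherical graphs are single vertices and single edges. Hence
$$A_\Gamma\cong \Z^{r}\times A_{e_1}\times\cdots\times A_{e_s},$$
where each $A_{e_j}$ is an even dihedral Artin group with label $2p^{t_j}$. Each such factor is isomorphic to $\mathrm{BS}(p^{t_j},p^{t_j})$, via the rewriting $s=ab$ used in Section~\ref{sec:central}, and is therefore residually-$p$ by \cite{Moldavanskii}. The group $\Z$ is residually-$p$ as well. A finite direct product of residually-$p$ groups is residually-$p$: given a nontrivial element, take a nontrivial coordinate, map that factor to a finite $p$-group detecting it, and compose with the projection. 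This settles the base case.

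\textbf{Inductive step: $\Gamma$ not complete.} Choose two non-adjacent vertices $u,v$ and set
$$\Gamma_1=\Gamma\setminus\{u\},\qquad \Gamma_2=\Gamma\setminus\{v\},\qquad \Delta=\Gamma_1\cap\Gamma_2.$$
Since there is no edge between $u$ and $v$, every relator of $A_\Gamma$ involves only vertices of $\Gamma_1$ or only vertices of $\Gamma_2$. Comparing presentations therefore gives
$$A_\Gamma\cong A_{\Gamma_1}\ast_{A_\Delta}A_{\Gamma_2},$$
where we use that special subgroups embed as the corresponding Artin groups. Each $\Gamma_i$ is an induced subgraph, so it is again even, of FC type, and has all labels of the form $2p^{t}$. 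It has fewer vertices, so $A_{\Gamma_1}$ and $A_{\Gamma_2}$ are residually-$p$ by induction. Because $\Gamma_i$ is even, $A_\Delta$ is a retract of $A_{\Gamma_i}$ via the map killing the vertices outside $\Delta$. Then \cite{BS} gives that the amalgam is residually-$p$.

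\textbf{Main obstacle.} The delicate point is checking that the hypotheses of \cite{BS} match exactly. We need that amalgamation along a common retract of both factors suffices, with no further separability or $p$-compatibility assumptions on the filtrations. If \cite{BS} in fact requires compatible retractions $G_1\to H$ and $G_2\to H$, this holds here: both retractions are restrictions of the single retraction $A_\Gamma\to A_\Delta$ that kills the vertices outside $\Delta$.

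Should an explicit argument be needed instead, I would proceed as follows. Given $1\neq g$ in the amalgam, write $g$ in reduced form. Using the retractions, I would choose finite $p$-quotients of $A_{\Gamma_1}$ and $A_{\Gamma_2}$ that induce the same quotient of $A_\Delta$ and in which the syllables of $g$ still lie outside the image of $A_\Delta$. The amalgam of these finite $p$-groups along their common subgroup is residually-$p$ (Higman), and the normal form theorem shows that the image of $g$ in it is nontrivial.
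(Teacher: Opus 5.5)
Your proof is correct and is essentially the paper's argument. The complete case goes through the decomposition into infinite cyclic factors and even dihedral factors $\mathrm{BS}(p^t,p^t)$ handled by \cite{Moldavanskii}, and the non-complete case goes through the splitting $A_\Gamma\cong A_{\Gamma_1}\ast_{A_\Delta}A_{\Gamma_2}$ along a retract handled by \cite{BS}, which the paper invokes with exactly this hypothesis; you just make the induction and the choice of $\Gamma_1,\Gamma_2$ explicit. Your fallback sketch is not needed, and if you keep it, note that Higman's theorem on amalgams of finite $p$-groups is a criterion rather than an unconditional statement: what makes your finite amalgam residually-$p$ is the common retract, since the kernel of the induced retraction is a free product of finite $p$-groups of $p$-power index.
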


\begin{proof}
In the dihedral case, this follows from \cite{Moldavanskii}. If $\Gamma$ is complete, then $A_\Gamma$ is a direct product of groups which are either infinite cyclic or even dihedral Artin groups with labels a power of $p$. By the dihedral case, each factor is residually-$p$, and hence so is $A_\Gamma$. 

For the general case, note that such an Artin group can be constructed by iterating free amalgamated products along retracts, and this construction preserves the property of being residually-$p$ \cite{BS}.
\end{proof}

As a consequence, observe that if a group $A_{\Gamma}$ is pro-$p$ FC, then
$$A_\Gamma \big/ \bigcap_i \gamma_i^{[p]}(A_\Gamma)=A_{\Gamma_p}.$$
Using this, Theorem~\ref{teo:isoproblem} follows easily.

\bigskip

\noindent{\sl Proof of Theorem \ref{teo:isoproblem}}
If $A_\Gamma \cong A_\Delta$, it follows that
$$A_\Gamma \big/ \bigcap_i \gamma_i^{[p]}(A_\Gamma)\cong A_\Delta \big/ \bigcap_i \gamma_i^{[p]}(A_\Delta),$$
and since both groups are pro-$p$ FC, this implies that $A_{\Gamma_p}\cong A_{\Delta_p}$. By \cite[Theorem 5.1]{Blasco-García 2019}, it follows that $\Gamma_p\cong \Delta_p$.\qed
\begin{remark}
The converse of this result is not true, even if we assume that both groups are pro-$p$ FC for all primes $p$. As a counterexample consider the Artin groups $A_\Gamma$ and $A_\Delta$ defined by the following graphs (from left to right):
		$$\begin{tikzpicture}[main/.style = {draw, circle},node distance={15mm}] 
		\node[][main] (1)  {}; 
		\node[][main] (6) [above right of=1] {};
		\node[][main] (2) [right of=1] {}; 
		
		\draw[-] (1) -- node[below] {$\scriptstyle{12}$}  (2);
		\draw[-] (1) -- node[above left]{$\scriptstyle{2}$}  (6);
		\draw[-] (6) -- node[pos=0.7,above right] {$\scriptstyle{2}$}   (2);
	\end{tikzpicture} \;\;\;\;\;\;\;\;\;\;\;\;\;\;\;\begin{tikzpicture}[main/.style = {draw, circle},node distance={15mm}] 
		\node[][main] (1)  {}; 
		\node[][main] (6) [above right of=1] {};
		\node[][main] (2) [right of=1] {}; 
		
		\draw[-] (1) -- node[below] {$\scriptstyle{4}$}  (2);
		\draw[-] (1) -- node[above left]{$\scriptstyle{2}$}  (6);
		\draw[-] (6) -- node[pos=0.7,above right] {$\scriptstyle{6}$}   (2);
	\end{tikzpicture}  $$
Then, the groups $A_{\Gamma_p}$ and $A_{\Delta_p}$ are isomorphic for all primes but $A_\Gamma$ is not isomorphic to $A_\Delta$ by Proposition \ref{TriangleArtin}.
\end{remark}
\section{\texorpdfstring{$p$}--Magnus maps}\label{sec:Magnus}

As we have seen at the beginning of Section \ref{sec:central}, for any group $G$ the group ring $RG$ has a descending filtration (\ref{eq:filtRG}) by powers of the augmentation ideal, called the {\sl augmentation filtration}. 

In general, we will say that an algebra $A$ is {\sl filtered} if there is a descending series of additive subgroups $F^iA$, $i\geq 0$ such that $(F^iA)(F^jA)\subseteq F^{i+j}A$.  We say that $A$ is {\sl separated} if $\bigcap_{i\geq 0} F^iA=0$. In that case there is an injection
$$A\to\varprojlim_i A/F^iA,$$
if this injection is an isomorphism we say that $A$ is {\sl complete}.

Associated to a filtered algebra $A$ we have a graded algebra
$$\gr_\bullet(A)=\bigoplus_{i\geq 0} F^iA/F^{i+1}A.$$

If $A,B$ are filtered algebras, a map $f:A\to B$ is {\sl filtered} if $f(F^iA)\subseteq F^iB$. 
Then $\gr(f):\gr_\bullet(A)\to\gr_\bullet(B)$ is the map given by
$$\gr(f)(a+F^iA)=f(a)+F^iB$$
where $a\in F^{i-1}A-F^{i}A$. It is a map of graded algebras.

We say that $f$ is an {\sl isomorphism of filtered algebras} if it is an isomorphism and $f(F^iA)=F^iB$. In that case, it is easy to see that $\gr(f)$ is also an isomorphism of graded algebras.  Conversely, if $A$ and $B$ are separated and complete and $\gr(f)$ is an isomorphism of graded algebras, then $f$ is an isomorphism of filtered algebras \cite[Lemma 2.4]{SuciuWangT}.


In the particular case of a group ring $RG$ with the augmentation filtration  we have
$$\gr_\bullet(RG)=\bigoplus_{i\geq 0}\omega_G^i/\omega_G^{i+1}.$$

Now, let $p$ be either zero or a prime and $F$ a field of characteristic $p$.  The (restricted) Lie algebra $\gr^{[p]}_\bullet(G)\otimes_\Z F$ has a universal envelopping algebra that we denote 
$$U_G=U(\gr^{[p]}_\bullet(G)\otimes_\Z F)$$
if the field $F$ is clear by the context.
This algebra is naturally graded and generated in degree 1. It is a remarkable Theorem of Quillen that these two objects are essentially the same:

\begin{theorem}[\cite{Quillen}]\label{Quillen} For a field $F$ of characteristic $p$ (zero or a prime) there is a natural isomorphism of graded algebras
$$U_G\cong\gr_\bullet(FG)$$
induced by $g\gamma_i^{[p]}(G)\mapsto g-1+\omega_G^i$.
\end{theorem}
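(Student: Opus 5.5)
The isomorphism $U_G\cong\gr_\bullet(FG)$ is a natural comparison of two graded algebras generated in degree one. I would build the map, show it is surjective directly, and get injectivity from a PBW basis count. The key input is the Quillen--Jennings--Lazard theorem stated above, which identifies $\gamma_i^{[p]}(G)$ with the dimension subgroups $D_i^F(G)$. (Here and below I take $F$ to be the prime field, $\F_p$ or $\Q$; the general case is a flat base change $-\otimes F$ on both sides, since $\omega_F^i=\omega_{\F_p}^i\otimes F$.)

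\textbf{Constructing the map.} Define $\theta_i:\gamma_i^{[p]}(G)/\gamma_{i+1}^{[p]}(G)\to \omega^i/\omega^{i+1}$ by $g\mapsto g-1$. This is well defined and additive for the following reasons.
\begin{itemize}
\item Since $\gamma_i^{[p]}=D_i^F$, we have $g-1\in\omega^i$.
\item The identity $gh-1=(g-1)+(h-1)+(g-1)(h-1)$ makes $\theta_i$ additive modulo $\omega^{i+1}$.
\item If $g\in\gamma_{i+1}^{[p]}=D_{i+1}^F$, then $g-1\in\omega^{i+1}$, so $\theta_i$ vanishes on $\gamma_{i+1}^{[p]}$.
\end{itemize}
Injectivity of each $\theta_i$ is exactly the reverse inclusion $D_{i+1}^F\subseteq \gamma_{i+1}^{[p]}$. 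Next I check that $\theta$ respects the operations:
\begin{itemize}
\item For brackets, $[g,h]-1=g^{-1}h^{-1}\big((g-1)(h-1)-(h-1)(g-1)\big)$, and $g^{-1}h^{-1}\equiv 1$ modulo $\omega$, so the group commutator goes to the ring commutator.
\item For the $p$-map, in characteristic $p$ we have $g^p-1=(g-1)^p$, so the $p$-th power goes to the $p$-th power.
\end{itemize}
Hence $\theta$ is a homomorphism of (restricted) Lie algebras into the associated graded algebra with its commutator and $p$-th power. By the universal property it extends to a graded algebra map $\Theta:U_G\to\gr_\bullet(FG)$. $\Theta$ is surjective because $\gr_\bullet(FG)$ is generated in degree one by the classes of $g-1$, which are images of $\theta_1$.

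\textbf{Injectivity.} This is the main obstacle. I would argue by dimension counting via PBW.
\begin{itemize}
\item Choose ordered bases of each $\gamma_i^{[p]}/\gamma_{i+1}^{[p]}$, lifted to elements $g_{i,k}\in G$.
\item By restricted PBW (exponents $<p$ in characteristic $p$) or ordinary PBW (characteristic $0$), the ordered monomials in the basis elements span $U_G$ freely.
\item It suffices to show that the corresponding ordered products $\prod (g_{i,k}-1)^{e_{i,k}}$, with weighted degree $\sum i\,e_{i,k}=n$, are linearly independent modulo $\omega^{n+1}$.
\end{itemize}
This is Jennings' basis theorem. I would reduce to the case where $G$ is a finite $p$-group (resp.\ a finitely generated torsion-free nilpotent group when $p=0$): a finite linear relation in degree $n$ involves finitely many elements and survives in a suitable quotient $G/N$ with $N\le\gamma_{n+1}^{[p]}$, by the residual description of the dimension subgroups. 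In that quotient, a dimension count shows that the monomials with $\sum i e\le n$ give a basis of $FG/\omega^{n+1}$ compatible with the filtration. This uses the fact that $|G|$ equals the product of the orders of the layers, so the total number of monomials equals $\dim FG$.

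I expect this independence step, and in particular its characteristic-zero analogue with Mal'cev bases, to be the hardest part. In practice I would cite Quillen's argument: he identifies $\gr(FG)$ with the enveloping algebra through the Hopf-algebra structure, namely that the primitives of $\gr(FG)$ are exactly the image of $\theta$. With that, $\Theta$ is an injective map of connected graded Hopf algebras that is injective on primitives, hence injective by the Milnor--Moore criterion.
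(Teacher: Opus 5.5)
The paper does not prove this statement; it only cites Quillen. Your closing Hopf-algebra paragraph is essentially Quillen's argument and does give a complete proof, but as written its key step is misstated. Knowing that the primitives of $\gr_\bullet(FG)$ are exactly $\im\theta$ says nothing about injectivity of $\Theta$. That identification is also a \emph{consequence} of the theorem, so using it as input is circular.

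The coalgebra criterion needs injectivity on the primitives of the \emph{source}. First, $P(U_G)=\gr^{[p]}_\bullet(G)\otimes F$. This is exactly where it matters that $U_G$ is the \emph{restricted} enveloping algebra in characteristic $p$; in the unrestricted one every $x^{p^k}$ is an extra primitive. Second, on $P(U_G)$ the map $\Theta$ is just $\theta$, which is injective by the inclusion $D^F_{i+1}\subseteq\gamma^{[p]}_{i+1}(G)$ you already took from Quillen--Jennings--Lazard.

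You should also justify that $\Theta$ is a coalgebra map. The coproduct $\Delta(g)=g\otimes g$ is filtered, so $\gr_\bullet(FG)$ is a graded Hopf algebra. For $g\in D^F_i$, the identity $\Delta(g-1)=(g-1)\otimes 1+1\otimes(g-1)+(g-1)\otimes(g-1)$ shows the class of $g-1$ is primitive, since the last term has filtration $2i>i$. Then $\Delta\Theta$ and $(\Theta\otimes\Theta)\Delta$ are algebra maps agreeing on generators, hence equal. With these two points the elementary argument finishes in every characteristic at once: a nonzero element of $\ker\Theta$ of least degree has vanishing reduced coproduct, hence is primitive, hence is zero.

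So the Jennings-basis/Mal'cev detour is unnecessary, and as phrased it has its own loose ends:
\begin{itemize}
\item Passing to a quotient $G/N$ with $N\le\gamma^{[p]}_{n+1}(G)$ gives a finite $p$-group only when $G$ is finitely generated. In general you first need a direct-limit reduction to a finitely generated subgroup, enlarged so that the relation lies in its $\omega^{n+1}$ and the chosen lifts lie in the correct terms of its Zassenhaus series.
\item Your ``dimension count'' yields a basis only after Jennings' spanning statement for $\omega^m$ is established.
\end{itemize}

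Compared with Quillen's original proof: he filters $G$ by the dimension subgroups $D^F_i$, for which injectivity of $\theta$ is a tautology, and then \emph{derives} $D^F_i=\gamma^{[p]}_i(G)$ from the isomorphism. You instead import that equality, and specifically its hard half $D^F_{i+1}\subseteq\gamma^{[p]}_{i+1}(G)$. This is legitimate here, since the paper states that theorem separately with the independent proofs of Jennings and Lazard. But your argument depends on that result rather than producing it, as Quillen's does.
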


We can complete $FG$ respect to the augmentation filtration and consider
$$F[[G]]=\varprojlim_i FG/\omega_G^i.$$
The augmentation filtration in $FG$ induces a filtration in $F[[G]]$ by powers of the completed augmentation ideal so $F[[G]]$ is a complete filtered algebra. If $F$ has prime characteristic $p$ and the group is residually-$p$, then $\bigcap_{i\geq 0}\omega_G^i=0$ so both $FG$ and $F[[G]]$ are separated.

In general, if $A=\bigoplus_{i\geq 0}A_i$ is a graded algebra, its {\sl degree completion} is
$$\hat{A}=\prod_{i\geq 0}A_i.$$
It is no longer graded, but filtered via
$$F^jA=\prod_{i\geq j}A_i$$
and one has $\gr_\bullet(\hat{A})=A$.
In particular, we will denote by $\hat{U}_G$ and $\hat{\gr}_\bullet(FG)$ the degree completions of  the graded algebras $U_G$ and ${\gr}_\bullet(FG)$.



\begin{definition}\label{def:filtered} With the previous notation, we say that a residually-$p$ group $G$ is {\sl $F$-Magnus} if there is an isomorphism of filtered complete algebras
$$M_G:F[[G]]\to\hat{U}_G.$$
If $F=\F_p$ we will say that $G$ is {\sl $p$-Magnus}. We will also say that $M_G$ is a {\sl Magnus map}.
\end{definition}

\begin{lemma} If $G$ is $F$-Magnus, then there is a Magnus map $M_G:F[[G]]\to\hat{U}_G$ such that
$$\gr(M_G):\gr(F[[G]])=\gr(F[G])\to\gr(\hat{U}_G)=U_G$$
    is the inverse of Quillen's map.
\end{lemma}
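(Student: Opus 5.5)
Start from an arbitrary Magnus map $M:F[[G]]\to\hat{U}_G$, an isomorphism of complete filtered algebras. Since it preserves filtrations exactly, it induces a graded algebra isomorphism
$$\gr(M):\gr_\bullet(F[[G]])=\gr_\bullet(FG)\to\gr_\bullet(\hat U_G)=U_G.$$
Let $Q:U_G\to\gr_\bullet(FG)$ be Quillen's isomorphism from Theorem~\ref{Quillen}. Then $\phi:=\gr(M)\circ Q$ is a graded automorphism of $U_G$. The plan is to correct $M$ by an automorphism of $\hat U_G$ lifting $\phi^{-1}$. In other words, we seek a filtered automorphism $\Psi$ of $\hat U_G$ with $\gr(\Psi)=\phi^{-1}$ and set $M_G=\Psi\circ M$. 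Then $M_G$ is still an isomorphism of filtered complete algebras, and $\gr(M_G)=\phi^{-1}\circ\gr(M)=Q^{-1}$, as required.

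The natural choice is $\Psi=\hat\psi$, the degree completion of the graded automorphism $\psi=\phi^{-1}$ of $U_G$. A graded algebra automorphism $\psi=\bigoplus_i\psi_i$ extends degreewise to $\hat U_G=\prod_i (U_G)_i$ via $(x_i)_i\mapsto(\psi_i(x_i))_i$. This map is multiplicative, because the product in $\hat U_G$ is computed degreewise through finite sums $\sum_{j+k=i}x_jy_k$. It is bijective, with inverse $\widehat{\psi^{-1}}$. It satisfies $\hat\psi(F^j\hat U_G)=F^j\hat U_G$, and its associated graded map is $\psi$ itself. Hence $\hat\psi$ is a filtered automorphism with $\gr(\hat\psi)=\phi^{-1}$.

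The remaining check is the compatibility of $\gr$ with composition, $\gr(\hat\psi\circ M)=\gr(\hat\psi)\circ\gr(M)$. This holds because both maps preserve filtrations exactly: an element of $F^{i-1}\setminus F^i$ is sent to an element of $F^{i-1}\setminus F^i$, so the defining formula for $\gr(f)$ composes. There is no real obstacle here. The only subtle point is making sure the degree completion of a graded automorphism is a genuine algebra automorphism of $\hat U_G$, which the degreewise formula above settles.
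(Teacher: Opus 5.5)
Your proof is correct and is essentially the paper's argument: the paper also post-composes the given Magnus map with the degree completions of $\gr(M^1_G)^{-1}$ and of the inverse of Quillen's map, which is exactly your $\hat\psi=\widehat{Q^{-1}}\circ\widehat{\gr(M)^{-1}}$. Your write-up is slightly cleaner. The paper's displayed composition has the labels of these two correcting maps interchanged, while you apply them in the right order and also check that the degree completion of a graded automorphism is a filtered algebra automorphism of $\hat U_G$.
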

\begin{proof} As $G$ is $F$-Magnus, there is a Magnus map $M^1_G:F[[G]]\to\hat{U}_G$. Consider the completed Quilllen's map
$\hat{Q}_G:\hat{U}_G\to\hat{\gr}_\bullet FG$ and let $M_G$ be the composition
$$F[[G]]\buildrel{M^1_G}\over\to\hat{U}_G\buildrel{Q_G^{-1}}\over\to\hat{\gr}_\bullet FG\buildrel{\gr(M^1_G)^{-1}}\over\to\hat{U}_G.$$
As all the maps are isomorphisms of filtered complete algebras, so is $M_G$. The fact that $\gr(M_G)$ is inverse to $Q_G$ is obvious.
\end{proof}

\begin{remark}\label{rem:Giorgiofiltered} As a consequence, if $G$ is $F$-Magnus, then there is an isomorphism of filtered complete algebras
$$\phi:F[[G]]\to\hat{\gr}_\bullet FG$$
such that $\gr(\phi)$ is the identity map. Conversely, if there is an isomorphism of filtered complete algebras
$$F[[G]]\to\hat{\gr}_\bullet FG$$ 
then $G$ is $p$-Magnus and there is also such an isomorphism $\phi$ with $\hat{\gr}(\phi)$ the identity map. In fact this is precisely the definition given in \cite[Definition 5.5.1]{GiorgioThesis}, note that these groups are there called filtered formal. The reason for that terminology is the notion of $\Q$-filtered formality used in \cite[Proposition 2.5]{SuciuWangT}, observe however that according to \cite{SuciuWangT} a group is $\Q$-filtered formal if there is an isomorphism of complete filtered Hopf algebras $F[[G]]\to\hat{\gr}_\bullet FG$ and in our case we do not ask the map $\phi$ to preserve the Hopf algebra structure. 
\end{remark}


\begin{lemma}\label{lem:filteredembed} Let $G$ be a group with a strictly $p$-embedded subgroup $H\leq G$. If $G$ is $F$-Magnus, then so is $H$.
\end{lemma}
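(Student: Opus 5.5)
The plan is to transport the Magnus map of $G$ to $H$ through the natural inclusions. First, $H$ is residually-$p$: strict $p$-embedding gives $\bigcap_i\gamma_i^{[p]}(H)=H\cap\bigcap_i\gamma_i^{[p]}(G)=1$. Next, strict $p$-embedding says exactly that the induced map of restricted Lie algebras $\gr^{[p]}_\bullet(H)\to\gr^{[p]}_\bullet(G)$ is injective in every degree. By the restricted PBW theorem, $U_H\to U_G$ is then injective, and $U_G$ is a free $U_H$-module on a basis containing $1$: take a homogeneous basis of $\gr^{[p]}_\bullet(G)\otimes F$ extending one of $\gr^{[p]}_\bullet(H)\otimes F$, and use restricted PBW monomials.

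By naturality of Quillen's isomorphism (Theorem \ref{Quillen}), $\gr_\bullet(FH)\to\gr_\bullet(FG)$ is injective. Equivalently, $FH\cap\omega_G^i=\omega_H^i$ for all $i$. This is the key filtration fact. It implies that $F[[H]]\to F[[G]]$ is injective, and that the filtration of $F[[H]]$ by powers of its augmentation ideal is the one induced from $F[[G]]$.

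Now fix a Magnus map $M_G$ with $\gr(M_G)$ equal to the inverse of Quillen's map; this is possible by the lemma preceding Remark \ref{rem:Giorgiofiltered}. Let $B=M_G(F[[H]])\subseteq\hat U_G$ with the induced filtration. Then $B$ is a complete filtered algebra isomorphic to $F[[H]]$. By the previous paragraph and naturality of Quillen's map, $\gr_\bullet(B)=U_H\subseteq U_G$ as graded subalgebras. It therefore suffices to produce an isomorphism of complete filtered algebras $\hat U_H\to B$. By \cite[Lemma 2.4]{SuciuWangT}, this reduces to a filtered algebra map whose associated graded map is the identity on $U_H$.

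This last step is the main obstacle. Having $\gr B=U_H$ alone does not give $B\cong\hat U_H$. Since $U_H$ is generated in degree $1$, I would build $\psi:\hat U_H\to B$ from a presentation of $U_H$: degree-one generators $x$ and homogeneous restricted relations. Each generator is sent to an element of $B$ with leading term $x$, for instance $M_G(h-1)$ for a suitable $h\in H$. The lifts are then corrected degree by degree so that the defining relations hold exactly in $B$. At each stage the error of a relation is an element of $B$ of higher filtration, and it lies in the image of the higher-degree part of $U_H$ because $\gr B=U_H$. The hope is to absorb it by perturbing the generators, using the freeness of $U_G$ over $U_H$ (a $U_H$-linear projection $U_G\to U_H$ that is the identity on $U_H$) to keep all corrections inside $B$. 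Completeness then lets the corrections converge.

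If this inductive correction fails in general, my fallback is to use the concrete situation where the lemma is applied: $H$ a retract of $G$. There the retraction $r$ induces a filtered algebra map $F[[G]]\to F[[H]]$. One can also try to choose $M_G$ compatible with $r$, so that the composite $F[[H]]\to F[[G]]\xrightarrow{M_G}\hat U_G\xrightarrow{\hat U_r}\hat U_H$ is a filtered map whose associated graded map is the identity; it is then an isomorphism by \cite[Lemma 2.4]{SuciuWangT}.
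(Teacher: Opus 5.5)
Your preliminary reductions are correct. Strict $p$-embedding makes $H$ residually-$p$ and makes $\gr^{[p]}_\bullet(H)\to\gr^{[p]}_\bullet(G)$ injective, so by Quillen and PBW $FH\cap\omega_G^i=\omega_H^i$. Hence $B=M_G(F[[H]])$ is a complete subalgebra of $\hat U_G$ with $\gr B=U_H$.

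The paper's proof essentially stops at this point. It takes $\phi_G\colon F[[G]]\to\hat{\gr}_\bullet FG$ with identity associated graded and asserts that it restricts to a map $F[[H]]\to\hat{\gr}_\bullet FH$ (the commutative square). It then concludes because the restriction again has identity associated graded. You are right that this containment does not follow from strict $p$-embedding. Take $G$ free on $x,y$ and the retract $H=\langle x\rangle$: the map $x\mapsto 1+X+Y^2$, $y\mapsto 1+Y$ has identity associated graded, but it sends $F[[H]]$ onto $F[[X+Y^2]]\not\subseteq F[[X]]$.

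Your own general argument, however, does not fill this hole. The degree-by-degree correction is a reformulation, not a proof. Lifts of the generators of $U_H$ into $B$ satisfying the relations exactly exist if and only if $B\cong\hat U_H$, i.e.\ if and only if $H$ is $F$-Magnus, which is the claim itself. Knowing that the error of a relation has leading term in $U_H$ does not place it in the image of the linearized relations applied to perturbations in $F^2B$. The PBW projection $U_G\to U_H$ is only $U_H$-linear, not multiplicative, so it cannot transport the exact relations holding in $\hat U_G$ to $B$. For a general strictly $p$-embedded $H$ the key step is therefore missing.

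Your fallback is sound, and it needs no compatibility between $M_G$ and the retraction $r$. Write $Q_G$ for Quillen's isomorphism, $\iota$ for the inclusion, and $U_\iota$, $U_r$ for the induced maps, and choose $M_G$ with $\gr(M_G)=Q_G^{-1}$. Then $\Phi=\hat U_r\circ M_G\circ\iota\colon F[[H]]\to\hat U_H$ is a filtered algebra map with
\[
\gr(\Phi)=U_r\circ Q_G^{-1}\circ\gr(\iota)=U_r\circ U_\iota\circ Q_H^{-1}=Q_H^{-1},
\]
by naturality of Quillen's map and $r\iota=\mathrm{id}_H$. So $\Phi$ is an isomorphism by \cite[Lemma 2.4]{SuciuWangT}. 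The same argument works whenever $U_H\hookrightarrow U_G$ admits a graded \emph{algebra} retraction, and that is precisely what is lacking in general. This proves the lemma for retracts, which is the situation in the paper's applications (special subgroups of even Artin groups). It does not prove the statement for arbitrary strictly $p$-embedded subgroups.
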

\begin{proof} By Remark \ref{rem:Giorgiofiltered} there is an isomorphism of filtered complete algebras $\phi_G:F[[G]]\to\hat{\gr}_\bullet FG$ such that $\hat{\gr}(\phi)$ is the identity. Moreover,
$H$ is strictly $p$-embedded in $H$, so there is a commutative diagram
\[\begin{tikzcd}
    F[[H]]\ar[d, hook,"\iota_1"]\ar[r,"\phi_H"]&\hat{\gr}_\bullet FH\ar[d, hook,"\iota_2"]\\
   F[[G]]\ar[r,"\phi_G"]&\hat{\gr}_\bullet FG\\
\end{tikzcd}
\]
where the vertical maps are inclusions.
The fact that $\hat{\gr}(\phi)$ is the identity implies that the restriction $\phi_H$ of $\phi_G$ to $F[[H]]$ is an isomorphism of filtered complete algebras $\phi_H:F[[H]]\to\hat{\gr}_\bullet FH$.
\end{proof}

We will need the following version of Proposition 5.1.2 of \cite{GiorgioThesis} for discrete groups

\begin{theorem}\label{teo:Giorgiofiltered} Let $G_1,G_2$ be groups with a common subgroup $H$ which is strictly $p$-embedded in both. If $G_1$ and $G_2$
 are $F$-Magnus, then so is $G=G_1\ast_HG_2$. \end{theorem}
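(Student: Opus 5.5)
Our plan is to build the Magnus map for $G=G_1\ast_HG_2$ from those of the factors, using the universal property of free products with amalgamation of complete filtered algebras together with Theorem \ref{teo:Giorgio}. By Remark \ref{rem:Giorgiofiltered}, for $i=1,2$ we have isomorphisms of complete filtered algebras $\phi_i:F[[G_i]]\to\hat{\gr}_\bullet FG_i$ with $\gr(\phi_i)$ the identity. By the proof of Lemma \ref{lem:filteredembed}, each $\phi_i$ restricts to an isomorphism $F[[H]]\to\hat{\gr}_\bullet FH$ whose associated graded map is the identity. These two restrictions need not coincide, however, and this is the first issue to address.

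\textbf{Step 1: compatible maps on $H$.} We want to replace $\phi_2$ by a map that agrees with $\phi_1$ on $F[[H]]$. Let $\psi=\phi_1|_{F[[H]]}\circ(\phi_2|_{F[[H]]})^{-1}$. This is an automorphism of $\hat{\gr}_\bullet FH$ that induces the identity on the associated graded. We try to extend $\psi$ to an automorphism $\Psi$ of $\hat{\gr}_\bullet FG_2$ with $\gr(\Psi)=\mathrm{id}$, and then set $\phi_2'=\Psi\circ\phi_2$. The obvious candidate is $\Psi=\phi_2\circ\theta\circ\phi_2^{-1}$, where $\theta$ is a filtered automorphism of $F[[G_2]]$ extending $\phi_2^{-1}\psi\phi_2|_{F[[H]]}$. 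We expect this extension step to be the \emph{main obstacle}.

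The first thing to try is to avoid the extension altogether by working directly with group elements. Since $H\to G_i$ is strictly $p$-embedded, $\gr^{[p]}_\bullet(H)$ is a restricted subalgebra of $\gr^{[p]}_\bullet(G_i)$, and $U_{G_i}$ is free as a $U_H$-module by PBW. So we construct the Magnus map on $G$ by defining it on generators of $G_1$ and $G_2$ separately and only requiring agreement on $H$. For this it suffices to choose $\phi_2$ so that $\phi_2|_H=\phi_1|_H$ as maps of groups into units. If needed, we would follow \cite{GiorgioThesis}: pass to pro-$p$ completions and use the corresponding result there (Proposition 5.1.2).

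\textbf{Step 2: the map on $G$.} Once $\phi_1$ and $\phi_2'$ agree on $H$, the maps $G_i\to(\hat{\gr}_\bullet FG_i)^\times\to(\hat U_G)^\times$ agree on $H$. Here we use the map $U_{G_i}\to U_G$ induced by $\gr^{[p]}_\bullet(G_i)\to\gr^{[p]}_\bullet(G)$ together with Quillen's Theorem \ref{Quillen}. These maps therefore induce a homomorphism $G\to(\hat U_G)^\times$, hence an algebra map $FG\to\hat U_G$. It sends the augmentation ideal into $F^1\hat U_G$, so it is filtered and extends by continuity to $M:F[[G]]\to\hat U_G$.

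\textbf{Step 3: $M$ is an isomorphism.} Both algebras are complete and separated; separatedness of $F[[G]]$ holds because $G$ is residually-$p$ by \cite{BS}. By \cite[Lemma 2.4]{SuciuWangT} it therefore suffices to show that $\gr(M)$ is an isomorphism. Restricted to degree one, $\gr(M)$ is the inverse of Quillen's map on the image of $G_i$, since each $\gr(\phi_i)$ is the identity. As $\gr(FG)$ is generated in degree one, $\gr(M)$ is the inverse of Quillen's isomorphism $U_G\cong\gr_\bullet(FG)$. The identification of $U_G$ with $U_{G_1}\ast_{U_H}U_{G_2}$, which is needed for the previous step to make sense, comes from Theorem \ref{teo:Giorgio} and the fact that the universal enveloping algebra functor preserves amalgamated free products.
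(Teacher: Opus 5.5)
\textbf{Gap 1: Step 1 is not carried out.} Your outline is essentially the paper's own: Magnus maps on the factors that agree on $H$, glued by a universal property, then $\gr$ identified with the inverse of Quillen's map, then \cite[Lemma 2.4]{SuciuWangT}. But the one nontrivial step, Step 1, is never done.

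\begin{itemize}
\item Saying ``it suffices to choose $\phi_2$ so that $\phi_2|_H=\phi_1|_H$'' restates what has to be proved.
\item The extension $\Psi$ is not constructed.
\item Deferring to the pro-$p$ statement in \cite{GiorgioThesis} only relocates the problem.
\end{itemize}

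Moreover, the starting point you take from the proof of Lemma \ref{lem:filteredembed} is false: a Magnus map need not carry $F[[H]]$ into $\hat{\gr}_\bullet FH$. Take $G=\Z^2=\langle x,y\rangle$ and the retract $H=\langle x\rangle$. Then $\hat{\gr}_\bullet FG=F[[X,Y]]$, with $X,Y$ the classes of $x-1,y-1$. The assignment $x\mapsto 1+X+Y^2$, $y\mapsto 1+Y$ defines a filtered isomorphism $F[[G]]\to F[[X,Y]]$ whose associated graded map is the identity. Yet it sends $x-1$ to $X+Y^2\notin F[[X]]=\hat{\gr}_\bullet FH$.

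So your $\psi$ is not in general an automorphism of $\hat{\gr}_\bullet FH$, and Magnus maps of $G_1$ and $G_2$ have no reason to agree on $H$. The paper obtains its compatible triple $\phi_H,\phi_{G_1},\phi_{G_2}$ ``arguing as in Lemma \ref{lem:filteredembed}'', which has exactly this defect. You have located the real crux, but the proposal does not close it.

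Your Step 3 in fact shows what is needed. Any homomorphism $\mu:G\to\hat U_G^\times$ with $\mu(g)\equiv 1+\bar g$ modulo $F^2\hat U_G$ (where $\bar g$ is the class of $g$ in $\gr^{[p]}_1(G)\otimes F$) extends to a Magnus map. The reason is that $\gr(\mu)$ is an algebra map agreeing with the inverse of Quillen's map in degree one, and $\gr_\bullet FG$ is generated in degree one. So Theorem \ref{teo:Giorgio} and the identification $U_G\cong U_{G_1}\ast_{U_H}U_{G_2}$ are not needed there. The missing idea is precisely a construction of such homomorphisms on $G_1$ and $G_2$ whose restrictions to $H$ coincide in $\hat U_G$.

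\textbf{Gap 2: residual-$p$-ness.} Definition \ref{def:filtered} requires $G$ to be residually-$p$. As used in the paper, \cite{BS} concerns amalgamation along retracts, which is stronger than the hypothesis of the statement. Under strict $p$-embedding alone the conclusion fails:

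\begin{itemize}
\item For $p$ odd, let $G_1=\langle a\rangle$, $G_2=\langle b\rangle$ and $H=\langle a^2\rangle=\langle b^2\rangle$.
\item Since $\langle a^2\rangle\cap\langle a^{p^k}\rangle=\langle a^{2p^k}\rangle$, $H$ is strictly $p$-embedded in both infinite cyclic (hence $p$-Magnus) factors.
\item In $G=\langle a,b\mid a^2=b^2\rangle$, the nontrivial element $ab^{-1}$ dies in every finite $p$-group, because squaring is injective there.
\end{itemize}

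So you need $H$ to be a retract of both factors, as it is in the application to $A_{\Gamma_p}$. The paper's proof does not address this point either.
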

 \begin{proof} The group $G=G_1\ast_HG_2$ is the pushout of the diagram
  \[\begin{tikzcd}
    &H\ar[d, hook]\ar[r,hook]&G_1\ar[d,dashed]\\
   &G_2\ar[r,dashed]&G\\
\end{tikzcd}
\] 
 The group ring functor has the functor that takes a ring to its units as a right adjoint, so it preserves push outs. Also the functor that takes a filtered ring to its completion has a right adjoint, the forgetful functor. Therefore $F[[G]]$ is the pushout of the induced diagram
   \[\begin{tikzcd}
    &F[[H]]\ar[d, hook]\ar[r,hook]&F[[G_1]]\ar[d,dashed]\\
   &F[[G_2]]\ar[r,dashed]&F[[G]]\\
\end{tikzcd}
\] 
 On the other hand, the hypothesis that $H$  is strictly $p$-embedded in both $G_1$ and $G_2$ together with Theorem \ref{teo:Giorgio}  implies that $\gr_\bullet FG$ is also the pushout of the corresponding diagram with  $\gr_\bullet FH$, $\gr_\bullet FG_1$ and $\gr_\bullet FG_2$ and again we deduce that the same holds true for the completed $\hat{\gr}_\bullet FG$. 
 
 As $G_1$ and $G_2$ are $F$-Magnus and $H$ is strictly $p$-embedded in both, arguing as in Lemma \ref{lem:filteredembed} we may choose $\phi_H,\phi_{G_1}$ and $\phi_{G_2}$ isomorphisms of filtered complete algebras such that the the images under $\hat{\gr}$ are all the identity and the following diagram commutes 
 \[\begin{tikzcd}
 \hat{\gr}_\bullet FH\ar[rrr,hook]\ar[ddd,hook]&&&\hat{\gr}_\bullet FG_1\ar[ddd]\\
    &F[[H]]\ar[d, hook]\ar[r,hook]\ar[ul,"\phi_H"]&F[[G_1]]\ar[d]\ar[ur,"\phi_{G_1}"]&\\
   &F[[G_2]]\ar[r]\ar[dl,"\phi_{G_1}"]&F[[G]]\ar[rd,dashed,"\phi_G"]&\\
   \hat{\gr}_\bullet FG_2\ar[rrr]&&&\hat{\gr}_\bullet FG\\
\end{tikzcd}
\] 
By the universal property of pushouts we deduce that there is a map $\phi_G$ of filtered complete separated algebras such that $\gr(\phi_G)$ is the identity. Therefore $\phi_G$ is an isomorphism  (see \cite[Lemma 2.4]{SuciuWangT}).
 \end{proof}

The existence of a $p$-Magnus map for a given group has strong cohomological consequences as we shall see next. 
Let $G$ be an arbitrary group and $\hat{G}$ its pro-$p$ completion. Let
$$dH^s(G,\F_p)=H^s_{cont}(G,\F_p)=\varinjlim_{H\trianglelefteq_pG}H^s(G/H,\F_p)$$
be the continuous cohomology of the pro-$p$ group $\hat{G}$. For the graded algebra $U_G$ we have bigraded Ext groups $\Ext_{U_G}^{s,i}(\F_p,\F_p)$ where $s$ is the cohomological degree and $i$ the internal degree. The algebra $\F_p[[G]]$ is filtered complete and $U_G=\gr(\F_p[[G]])$, so there is a May spectral sequence (\cite{May} for finite groups, \cite[Theorem 5.1.12]{SymondsWeigel} for pro-$p$ groups):
$$E_1^{j,i}=\Ext_{U_G}^{j+i,i}(\F_p,\F_p)\Rightarrow dH^{j+i}(G,\F_p),$$
where $i$ is the internal degree and $s=i+j$ the cohomological degree ($j$ is termed the complementary degree). We have:

\begin{lemma}\label{lem:collapse}
    If $G$ is $p$-Magnus, the spectral sequence collapses at page 1 so
    $$dH^{s}(G,\F_p)=\bigoplus_{i\geq 0}\Ext_{U_G}^{s,i}(\F_p,\F_p)$$
\end{lemma}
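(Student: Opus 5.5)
The idea is that the May spectral sequence is functorial in the filtered complete algebra $\F_p[[G]]$. For the degree completion of a graded algebra it visibly degenerates, and a $p$-Magnus map transports that degeneration to $\F_p[[G]]$.

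I would first recall how the spectral sequence is built, following \cite{May} and \cite[Theorem 5.1.12]{SymondsWeigel}. Take the bar complex (or a minimal projective resolution) of the trivial module $\F_p$ over the complete filtered algebra $A=\F_p[[G]]$. Filter it by the augmentation filtration. The associated graded is the bar complex of $\gr(A)=U_G$, which is why $E_1=\Ext_{U_G}(\F_p,\F_p)$. The abutment is $\Ext_A(\F_p,\F_p)$ computed with continuous cochains, and this is $dH^\bullet(G,\F_p)$. The key structural point is that the whole construction depends only on the isomorphism class of $A$ as a filtered complete augmented algebra. So a filtered isomorphism of such algebras induces an isomorphism of spectral sequences from the $E_1$ page onwards, and this isomorphism is compatible with the abutments.

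Next, by the lemma preceding Remark~\ref{rem:Giorgiofiltered}, we may choose the Magnus map $M_G:\F_p[[G]]\to\hat U_G$ so that $\gr(M_G)$ is the inverse of Quillen's isomorphism. It follows that the spectral sequence for $\F_p[[G]]$ is isomorphic to the May spectral sequence of $\hat U_G$, the degree completion of the graded algebra $U_G$, filtered by $F^j\hat U_G=\prod_{i\ge j}(U_G)_i$. One should also check that $M_G$ preserves augmentations. This holds because $M_G$ maps the first filtration step onto the first filtration step, and those are exactly the augmentation ideals.

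It remains to show that for $\hat A$ with $A=\bigoplus A_i$ graded and connected, the spectral sequence collapses at $E_1$. Here I use the internal grading. The completed bar complex of $\hat A$ is the product over internal degrees of the bar complexes of $A$ in each internal degree, and its differential preserves internal degree. So the complex splits as a product of subcomplexes $C^{\bullet,i}$. On $C^{\bullet,i}$ the filtration is concentrated in a single filtration degree: the filtration degree equals the internal degree $i$, since a tensor of homogeneous elements lies in $F^i$ exactly when its total degree is at least $i$, and only degree $i$ occurs there. For a filtered complex whose filtration has one nonzero graded piece, all differentials $d_r$ with $r\ge1$ vanish. Taking the product over $i$ then gives
$$\Ext_{\hat A}^s(\F_p,\F_p)=\bigoplus_{i}\Ext_A^{s,i}(\F_p,\F_p).$$
The sum is finite for each $s$ because $U_G$ is generated in degree $1$, so $\Ext^{s,i}_{U_G}$ vanishes unless $i\ge s$ and $i$ is bounded in terms of the resolution. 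If needed, one can instead use continuous dual cochains, which turns the product into a direct sum.

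I expect the main obstacle to be the naturality and convergence bookkeeping. One has to check that the May spectral sequence of \cite{SymondsWeigel} is genuinely natural with respect to isomorphisms of complete filtered algebras. One also has to check that its abutment for $\F_p[[G]]$ agrees with $dH^\bullet(G,\F_p)$, which requires identifying $\F_p[[G]]$ with the completed group algebra of $\hat G$. Finally, the product-versus-sum issue for continuous cohomology must be handled. I would settle these by working with the continuous dual of the completed bar construction, in which the splitting by internal degree becomes a direct sum.
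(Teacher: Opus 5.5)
Your proposal is correct and is essentially the paper's argument. The paper just notes that the $p$-Magnus map gives a filtered isomorphism $\F_p[[G]]\cong\hat U_G$ and that the May spectral sequence of a degree completion collapses; you have supplied the naturality and internal-degree splitting details it leaves implicit. One caveat: your claim that the sum over $i$ is finite for each $s$ is not justified without finiteness hypotheses on $U_G$, but it is not needed, since the continuous-dual cochain route you mention already gives the direct sum.
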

\begin{proof} If $G$ is $p$-Magnus, we have an isomorphism of filtered complete algebras 
$$\F_p[[G]]\cong\hat{U}_G$$
and as $\hat{U}_G$ is the degree completion of $U_G$, the spectral sequence collapses at page 1.
\end{proof}



\begin{proposition}\label{FMagnusArtin}
    Let $G=A_\Gamma$ be a pro-$p$ FC Artin group. Then $A_{\Gamma_p}$ is $F$-Magnus for any field $F$ of characteristic $p$.
\end{proposition}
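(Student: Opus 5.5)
Replacing $\Gamma$ by $\Gamma_p$, we may assume $\Gamma$ is even of FC type with every label of the form $2p^t$. Lemma~\ref{lem:resp} then gives that $A_\Gamma$ is residually-$p$, which Definition~\ref{def:filtered} requires. The plan is to induct on $|V(\Gamma)|$, mirroring the proof of Theorem~\ref{teo:prestricted}. If $\Gamma$ is not complete, choose non-adjacent vertices $u,v$ and set $\Gamma_1=\Gamma\setminus\{u\}$, $\Gamma_2=\Gamma\setminus\{v\}$, $\Delta=\Gamma_1\cap\Gamma_2$. Then $A_\Gamma=A_{\Gamma_1}\ast_{A_\Delta}A_{\Gamma_2}$. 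Both factors are even, FC, and have labels that are powers of $p$ times $2$, so by induction they are $F$-Magnus. Since $\Gamma$ is even, $A_\Delta$ is a retract of each $A_{\Gamma_i}$ and hence strictly $p$-embedded, so Theorem~\ref{teo:Giorgiofiltered} shows that $A_\Gamma$ is $F$-Magnus.

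If $\Gamma$ is complete, the FC condition makes it spherical, so $A_\Gamma$ is a direct product of copies of $\Z$ and even dihedral groups $A_e$ with label $2p^t$. Two steps remain.

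\emph{Direct products.} If $G_1,G_2$ are $F$-Magnus, I will show $G_1\times G_2$ is. The group $\gr^{[p]}_\bullet(G_1\times G_2)$ is the direct sum of the two restricted Lie algebras, because each factor is a retract, so $U_{G_1\times G_2}\cong U_{G_1}\otimes U_{G_2}$. On the other side, $F[[G_1\times G_2]]$ is the completed tensor product $F[[G_1]]\hat\otimes F[[G_2]]$, whose filtration is the sum of the products of the two filtrations. The tensor product of the Magnus maps is then a filtered map, and its associated graded map is the tensor product of the two associated graded maps. That map is an isomorphism, so \cite[Lemma 2.4]{SuciuWangT} upgrades it to a filtered isomorphism.

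\emph{Base cases.} For $\Z=\langle x\rangle$, send $x\mapsto 1+\mathfrak x$ in $\hat U_{\Z}$. In characteristic $p$, $U_\Z$ is the divided-power-like algebra generated by $\mathfrak x,\mathfrak x^{[p]},\dots$ with the relations $(\mathfrak x^{[p]^k})^p=\mathfrak x^{[p]^{k+1}}$, so $U_\Z\cong F[\mathfrak x]$ with $\mathfrak x$ in degree $1$. Likewise $F[[\Z]]\cong F[[x-1]]$, so the map is clearly a filtered isomorphism.

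For the dihedral group $G=\langle a,s\mid [s^{p^t},a]=1\rangle$, I define $M(a)=1+\mathfrak a$ and $M(s)=1+\mathfrak s$ in $\hat U_G$. By Proposition~\ref{prop:pZdihedral}, $U_G=F\langle \mathfrak a,\mathfrak s\rangle/([\mathfrak s^{p^t},\mathfrak a])$, where the restricted power $\mathfrak s^{[p]^t}$ is the ordinary power $\mathfrak s^{p^t}$ in $U$. In characteristic $p$ we have $(1+\mathfrak s)^{p^t}=1+\mathfrak s^{p^t}$, which commutes with $1+\mathfrak a$. Hence $M$ is a well-defined homomorphism from $G$ into the units of $\hat U_G$, and it extends to a filtered algebra map $F[[G]]\to\hat U_G$ because $M(g)-1$ lies in positive degree. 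Its associated graded map sends $a-1\mapsto\mathfrak a$ and $s-1\mapsto\mathfrak s$, so it is inverse to Quillen's isomorphism (Theorem~\ref{Quillen}). Since both algebras are complete and separated (here we use that $G$ is residually-$p$), \cite[Lemma 2.4]{SuciuWangT} gives a filtered isomorphism.

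The main obstacle I expect is making the direct product step rigorous. One must identify $F[[G_1\times G_2]]$ with the completed tensor product, check compatibility of the filtrations, and verify separatedness. An alternative that avoids this entirely is to write down the Magnus map on generators for the whole spherical group at once, as in the dihedral case: use $v\mapsto 1+\mathfrak v$, check the commuting relations directly in $\hat U$ (commuting generators map to commuting elements, since $[\mathfrak u,\mathfrak v]=0$ there), and again compare the associated graded maps with Quillen's map. I would try this direct route first.
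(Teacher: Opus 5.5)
Your proposal is correct and follows the paper's own proof. The dihedral base case is the same: the explicit map $a\mapsto 1+\mathfrak a$, $s\mapsto 1+\mathfrak s$ on $\langle a,s\mid[s^{p^t},a]=1\rangle$, justified by $(1+\mathfrak s)^{p^t}=1+\mathfrak s^{p^t}$ and by identifying the associated graded map with the inverse of Quillen's isomorphism; the induction is likewise the paper's iteration of Theorem~\ref{teo:Giorgiofiltered} along retracts. Your separate treatment of complete graphs (the $\Z$ base case plus the direct-product step) is a genuine addition, since a complete graph admits no splitting $\Gamma=\Gamma_1\cup\Gamma_2$ into proper subgraphs covering all edges, and the paper's ``iterating'' step silently passes over this case. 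In your preferred direct route, keep the generator $s=ab\mapsto 1+\mathfrak s$ for each dihedral factor rather than sending $b\mapsto 1+\mathfrak b$: the latter does not respect $(ab)^{p^t}=(ba)^{p^t}$, because already for $p=2$, $t=1$ the element $[(\mathfrak a+\mathfrak b+\mathfrak a\mathfrak b)^2,\mathfrak a]$ has nonzero degree-$4$ component in $U$.
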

\begin{proof}  Assume first that $\Gamma$ has just two vertices and one edge with label $2p^t$. Then
$$G=\langle a,b\mid (ab)^{p^t}=(ba)^{p^t}\rangle$$
and setting $s=ab$, $G$ also admits the presentation
$$G=\langle a,s\mid as^{p^t}=s^{p^t}a\rangle.$$
Let $U=U(\gr^{[p]}_\bullet(G)\otimes_{\F_p}F)$ be the universal enveloping algebra of the Zassenhaus restricted $p$-Lie algebra of $G$. Then Theorem \ref{teo:prestricted} implies that $U$ is precisely the $F$-algebra
$$U=F\langle a,s\mid as^{p^t}=s^{p^t}a\rangle.$$
Observe that there is an augmentation map $U\to F$, let $U^+$ be the augmentation ideal, i.e., the kernel of the augmentation map. This yields a filtration of $U$ by powers of $U^+$ but in fact, $U$ is naturally $\N$-graded. Let $\hat{U}$ be the degrre completion of $U$. We may define a representation of $G$ in the group of units of $\hat{U}$ as follows
$$\begin{aligned}
\varphi:G&\to\hat{U}^\times\\
a&\mapsto 1+a\\
s&\mapsto 1+s\\
\end{aligned}$$
This is well defined because we are in characteristic $p$ so in $U$ we have
$$(1+a)(1+s)^{p^t}=(1+a)(1+s^{p^t})=(1+s^{p^t})(1+a)=(1+s)^{p^t}(1+a)$$
and $\varphi$ can be extended to a map that we also denote  $\varphi:FG\to\hat{U}$. We obviously have $\varphi(\omega_G)\subseteq U^+$ so $\varphi$ is a map of filtered algebras that can be further extended to
$\hat{\varphi}:F[[G]]\to\hat{U}$. Note that the induced graded map $$\gr\hat{\varphi}:\gr_\bullet(F[[G]])=\gr_\bullet(F[G])\to\gr_\bullet\hat{U}=U$$
is precisely the inverse of  Quillen's map so it is an isomorphism and this implies that also $\hat{\varphi}$ is.

Now, iterating  Theorem \ref{teo:Giorgiofiltered} we get that for an arbitrary pro-p FC Artin group $A_\Gamma$ the group $A_{\Gamma_p}$ is p-magnus.
\end{proof}

\section{Cohomological \texorpdfstring{$p$}--completeness}\label{sec:complete}

In this section we will complete the proof of Theorem \ref{teo:filtered}.

\begin{definition}
    A group $G$ is called {\sl cohomologically $p$-complete} if the homomprphism $dH^s(G,\F_p)\to H^s(G,\F_p)$ induced by the natural map $G\to \hat{G}$ from $G$ to its $p$-completion is an isomorphism.
\end{definition}
\begin{proposition}\label{p-complete}
    Let $G=A_\Gamma$ be a pro-p FC Artin group. Then $A_{\Gamma_p}$ is cohomologically $p$-complete.
\end{proposition}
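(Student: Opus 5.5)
We will argue by induction on the number of vertices of $\Gamma_p$, mirroring the proofs of Theorem \ref{teo:prestricted} and Proposition \ref{FMagnusArtin}. Replacing $\Gamma$ by $\Gamma_p$, we may assume $\Gamma$ is even of FC type with all labels of the form $2p^t$. Then $A_\Gamma$ is residually-$p$ by Lemma \ref{lem:resp}, so $G\to\hat G$ is injective. The natural map $dH^\bullet(G,\F_p)\to H^\bullet(G,\F_p)$ is always an isomorphism in degrees $0$ and $1$, and is injective in degree $2$ (classical; the extensions of $\hat G$ by $\F_p$ are detected on $G$). Since $A_\Gamma$ has cohomological dimension at most $2$ once $\Gamma$ has no triangles, and in general finite dimension, a cleaner route is to compare the whole rings. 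We will do this using Lemma \ref{lem:collapse} together with the explicit description of $H^\bullet(A_\Gamma,\F_p)$ from Lemma \ref{CohomologyEven}.

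\textbf{Base cases.} For $\Gamma$ a single vertex, $G=\Z$, which is classically cohomologically $p$-complete. For $\Gamma$ a single edge with label $2p^t$, write $G=\langle a,s\mid[s^{p^t},a]=1\rangle$. The key point is that $G$ is a central extension $1\to Z\to G\to Q\to 1$ with $Z=\langle s^{p^t}\rangle\cong\Z$ and $Q=\Z\ast\Z_{p^t}$. Free products of cohomologically $p$-complete groups are cohomologically $p$-complete, and both $\Z$ and the finite $p$-group $\Z_{p^t}$ are. The pro-$p$ completion is exact on this sequence: $Z$ is strictly $p$-embedded, as shown in the proof of Proposition \ref{prop:pZdihedral}, so the induced topology on $Z$ is its full pro-$p$ topology. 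Comparing the Lyndon--Hochschild--Serre spectral sequences for the discrete and the pro-$p$ extensions, the map on $E_2$ pages is an isomorphism. Hence $G$ is cohomologically $p$-complete. A direct count is an alternative: Lemma \ref{lem:collapse} gives $\dim dH^s$ as $\dim\Ext^s_{U_G}$, and this can be compared with $\dim H^s=1,2,1$ from Lemma \ref{CohomologyEven}, provided injectivity of the comparison map is known. Direct products of finitely many of these groups are again cohomologically $p$-complete by the Künneth formula on both sides, since pro-$p$ completion commutes with finite direct products. This handles all complete (spherical) $\Gamma$.

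\textbf{Inductive step.} If $\Gamma$ is not complete, write $\Gamma=\Gamma_1\cup\Gamma_2$ with proper subgraphs and $\Delta=\Gamma_1\cap\Gamma_2$, so that $G=A_{\Gamma_1}\ast_{A_\Delta}A_{\Gamma_2}$ with $A_\Delta$ a retract of both factors, hence strictly $p$-embedded. Two ingredients are needed.
\begin{enumerate}
\item The pro-$p$ completion of $G$ is the pro-$p$ amalgam $\hat A_{\Gamma_1}\amalg_{\hat A_\Delta}\hat A_{\Gamma_2}$, and this amalgam is proper. This holds because the retractions extend to the completions, so $\hat A_\Delta$ embeds in $\hat A_{\Gamma_i}$ and the full pro-$p$ topologies are compatible.
\item This proper pro-$p$ amalgam has a Mayer--Vietoris sequence in continuous cohomology, exactly like the discrete amalgam. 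For retracts this follows because the amalgam is a "tree product along retracts".
\end{enumerate}
The comparison maps then give a morphism of long exact Mayer--Vietoris sequences. By induction the maps are isomorphisms for $A_{\Gamma_1}$, $A_{\Gamma_2}$ and $A_\Delta$, so the five lemma gives the isomorphism for $G$.

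\textbf{Main obstacle.} The hardest step is (2): the existence of the pro-$p$ Mayer--Vietoris sequence, which amounts to showing the pro-$p$ amalgam is "$p$-efficient" in the sense that $\F_p[[\hat G]]$ admits the short exact sequence of induced modules. I would try first to deduce it from Theorem \ref{teo:Giorgiofiltered}. The pushout description of $\F_p[[G]]$ through the Magnus isomorphisms with $\hat U_G$, combined with Theorem \ref{teo:Giorgio}, shows that $U_G$ is the amalgamated free product of the $U_{G_i}$ over $U_H$, which is free over $U_H$. This yields the Mayer--Vietoris sequence for $\Ext_{U_G}$, and hence, by Lemma \ref{lem:collapse}, for $dH^\bullet$. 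The remaining work is to check compatibility with the discrete sequence via the comparison maps.
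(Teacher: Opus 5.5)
Your skeleton matches the paper's: settle the edge with label $2p^t$, then the spherical graphs, then induct over $A_\Gamma=A_{\Gamma_1}\ast_{A_\Delta}A_{\Gamma_2}$ along retracts. The differences are in how each step is justified.

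\textbf{The edge case.} The paper uses the $p$-Magnus property (Proposition~\ref{FMagnusArtin}) and Lemma~\ref{lem:collapse} to identify $dH^\bullet(G,\F_p)$ with $\Ext_U(\F_p,\F_p)$. It computes the latter from an explicit length-two resolution (dimensions $1,2,1$) and compares with Theorem~\ref{teo:CohomRing}. Its assertion that $\tau$ ``sends basis to basis'' tacitly needs the classical facts you quote: isomorphism in degree $1$, injectivity in degree $2$. So your ``direct count'' is the paper's argument with that point made explicit.

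Your main route compares the Lyndon--Hochschild--Serre spectral sequences of the central extension $1\to\langle s^{p^t}\rangle\to G\to\Z\ast\Z_{p^t}\to 1$ and of its pro-$p$ completion. This goodness-type argument bypasses the Magnus machinery and the $\Ext$ computation entirely, and it is correct. Centrality makes the $E_2$-pages $H^i(\hat Q,\F_p)\otimes H^j(\hat Z,\F_p)$ and $H^i(Q,\F_p)\otimes H^j(Z,\F_p)$, and the comparison map between them is an isomorphism.

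One correction: for $t\geq 1$ the subgroup $Z$ is not strictly $p$-embedded. Indeed $Z\cap\gamma_i^{[p]}(G)=\langle s^{p^{\max\{t,t(i)\}}}\rangle$, while $\gamma_i^{[p]}(Z)=\langle s^{p^{t+t(i)}}\rangle$. What you actually need is only that $G$ induces the full pro-$p$ topology on $Z$, and that is what the computation for the retract $\langle s\rangle$ in Proposition~\ref{prop:pZdihedral} gives. Your K\"unneth step also treats spherical graphs with several factors, which the paper leaves implicit.

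\textbf{The inductive step.} The paper just cites Theorem~C of \cite{LMPW}; you are reproving it, and the obstacle you flag is not a real one.

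First, properness of $\hat A_{\Gamma_1}\amalg_{\hat A_\Delta}\hat A_{\Gamma_2}$ comes from the retractions $A_\Gamma\to A_{\Gamma_i}$, whose completions are left inverses of $\hat A_{\Gamma_i}\to\hat A_\Gamma$. The embedding $\hat A_\Delta\hookrightarrow\hat A_{\Gamma_i}$ that you invoke is not what properness requires, since pro-$p$ amalgams over a common subgroup can fail to be proper.

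Second, once the amalgam is proper, its standard graph is a pro-$p$ tree, a standard result of profinite Bass--Serre theory. This gives the continuous Mayer--Vietoris sequence for any proper pro-$p$ amalgam, whether or not the amalgamated subgroup is a retract.

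I would not try to extract it from Theorem~\ref{teo:Giorgiofiltered} and Lemma~\ref{lem:collapse}. That lemma identifies $dH^\bullet$ with $\Ext_{U_G}$ only degreewise, via a collapsing spectral sequence. It says nothing about restriction maps, connecting maps, or compatibility with the discrete comparison map, which is what the five lemma needs.

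Instead, map the Bass--Serre sequence $0\to\F_p[G/A_\Delta]\to\F_p[G/A_{\Gamma_1}]\oplus\F_p[G/A_{\Gamma_2}]\to\F_p\to 0$ into its completed analogue over $\hat G$, and apply Shapiro's lemma on both sides. This yields the morphism of long exact sequences, and with it your argument goes through.
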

\begin{proof} Assume first that $\Gamma$ consists of just an edge with vertices $a,b$ and label $2p^t$. Then as we have seen before,
 $U$ is the graded $F$-algebra
$$U=F\langle a,s\mid as^{p^t}=s^{p^t}a\rangle.$$

There is a free $U$-resolution of the trivial module $F$

\begin{equation*}\begin{tikzcd}
0\arrow[r]&[0.1em]P_2\arrow[rr, "\delta_2"]\arrow[dr, two heads] &  &P_1\arrow[rr, "\delta_1"]\arrow[dr, two heads] & & P_0\arrow[r, "\delta_0"] &F\\
& &\ker(\delta_1)\arrow[ur, hook]&& \ker(\delta_0)\arrow[ur, hook]&\\
\end{tikzcd}\end{equation*}
where $P_2=U$, $P_1=U1_a\oplus U1_s$, $P_0=U$, $\delta_2(1)=1_as^2-1_ssa$, $\delta_1(1_a)=a$, $\delta_1(1_s)=s$ and $\delta_0$ is the augmentation map $a,s\mapsto 0$. To see that this is in fact a resolution, observe that each composition $\delta_i\delta_i$ vanish and moreover there is an $F$-split that we define next. The map $\delta_0$ splits via the $F$-injection $F\to P_0$. 
The $F$-algebra $U$ admits an $F$-basis consisting of elements of the form
\begin{equation}\label{eq:basis}
    s^ka^{l_1}s^{r_1}\ldots a^{l_m}s^{r_m}
\end{equation}
where $k\geq 0$ and either $m=0$ or $m>0$, $r_1,\ldots,r_m<p^t$ and $0<l_1,\ldots,l_m,r_1,\ldots,r_{m-1}$. The kernel $\ker(\delta_0)$ is the augmentation ideal $U^+$ and we let
 $\psi_1:\ker(\delta_0)\to P_1$ be the map given by
$$\psi_1(s^ka^{l_1}s^{r_1}\ldots a^{l_m}s^{r_m})=\Bigg\{\begin{aligned}
&s^{k-1}a^{l_1}s^{r_1}\ldots a^{l_m}s^{r_m}\text{ if }k>0\\
&a^{l_1-1}s^{r_1}\ldots a^{l_m}s^{r_m}\text{ otherwise}.
\end{aligned}$$
It is obvious that the composition $\delta_1\psi_1$ is the identity on $\ker(\delta_0)$ and easy to deduce that $\ker(\delta_1)=\im(\delta_2)$. Finally, observe that $\delta_2$ is injective because the image of the basis (\ref{eq:basis}) of $U$ is linearly independent.
Moreover, this is a graded free resolution and we see that $\delta_1$ has degree 1, whereas $\delta_2$ has degree 2. Using this resolution to compute the bigraded Ext functor $\Ext^{p,q}_U(F,F)$ where $p$ is the cohomological degree and $q$ the internal degree one sees that the only non-vanishing terms are
$\Ext_U^{0,0}(F,F)$,
$\Ext_U^{1,1}(F,F)$ and
$\Ext_U^{2,t+1}(F,F)$, with respective $F$-dimensions 1, 2 and 1. As cup products preserve both internal and cohomological degrees, we deduce that as a ring, $\Ext^{\bullet,\bullet}_U(F,F)$ is just the trivial ring of dimension 3 with a basis of three elements: $\gamma_a$, $\gamma_s$ of degree 1, $\gamma$ of degree $t+1$ such that all the products vanish.

As $A_\Gamma$ is $p$-Magnus, by Lemma \ref{lem:collapse} there is an isomorphism
$$\Ext_U^{\bullet,\bullet}(F,F)\cong dH^\bullet(G,F)$$
and a map
$$\tau:dH^\bullet(G,F)\to H^\bullet(G,F).$$
But by Theorem \ref{teo:CohomRing}, as $F$ is a field of characteristic $p$, $H^\bullet(G,F)$ is also the trivial ring of dimension 3, moreover the map $\tau$ sends basis to basis so it is an isomorphism and therefore $A_\Gamma$ is cohomologically $p$-complete.

Now, iterating Theorem Theorem C of \cite{LMPW} we get that $A_{\Gamma_p}$ is cohomologically $p$-complete for any pro-p FC Artin group $A_\Gamma$.
\end{proof}
Theorem \ref{teo:filtered} follows from Lemma \ref{lem:resp} and Propositions \ref{FMagnusArtin} and \ref{p-complete}.



\section{Groups based on Hydra Groups}\label{sec:Hydra}

An hydra group is a group with a presentation of the form
\begin{equation}\label{eq:hydra} H=\langle s,b\mid[s,\buildrel k\over\ldots,s,b]\rangle.\end{equation}
In \cite{BM}, Baumslag and Mikhailov show that hydra groups are residually torsion free nilpotent. 
In fact they give two proofs of this fact: the first, Theorem 1, is a direct proof for these groups and the second is a more general result where they consider one relator groups where the relator has the form
$$r=[v,v,\buildrel k\over\ldots,v,u]=1$$
so that $v$ and $u$ are words in two disjoint subsets of a minimal generating set. They show that for these kind of relators one can apply the following Theorem of Labute.
\begin{theorem}[\cite{Labute2}]
    Let $F$ be a free group with a free generating set $X$ and let $1\neq r\in \gamma_k(F)$ an element which is not a proper power modulo $\gamma_{k+1}(F)$. Let $G$ be the one relator group 
$$G=\langle X\mid r\rangle.$$
Consider the descending central series Lie rings $\gr_\bullet(F)$ and $\gr_\bullet(G)$ and let $\mathfrak{r}\in\gr_k(F)$ be the image of $r$. Then
$$\gr_\bullet(G)=\langle \mathfrak{X}\mid\mathfrak{r}\rangle$$
and $\gr_\bullet(G)$ is torsion free as abelian group.
\end{theorem}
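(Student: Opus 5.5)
The strategy is to trap $\gr_\bullet(G)$ between the Lie ring $\mathfrak{L}:=L(\mathfrak{X})/(\mathfrak{r})$ and the associated graded ring of $\Z G$. First, since $r\in\gamma_k(F)$ maps to $\mathfrak{r}$ in $\gr_k(F)$, the natural surjection $L(\mathfrak{X})=\gr_\bullet(F)\to\gr_\bullet(G)$ (Magnus--Witt) kills $\mathfrak{r}$. Hence it factors through an epimorphism $\pi:\mathfrak{L}\to\gr_\bullet(G)$. Second, there is the standard map $\theta:\gr_\bullet(G)\to\gr_\bullet(\Z G)$ given by $g\gamma_{i+1}(G)\mapsto (g-1)+\omega^{i+1}$. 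It therefore suffices to prove two things: that $\theta\circ\pi$ is injective, and that $\mathfrak{L}$ is torsion free. Injectivity of $\theta\circ\pi$ forces $\pi$ to be an isomorphism and gives both conclusions.

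The algebraic core is to show that $\mathfrak{r}$ is an \emph{inert} element, uniformly in the coefficients. Let $\rho$ be the image of $\mathfrak{r}$ in the free associative ring $\Z\langle X\rangle$ and set $U=\Z\langle X\rangle/(\rho)$. The hypothesis that $r$ is not a proper power modulo $\gamma_{k+1}(F)$ says exactly that $\mathfrak{r}$ is primitive (not divisible) in the free abelian group $\gr_k(F)$. So for every field $K$ (namely $\Q$ and each $\F_p$) the element $\mathfrak{r}\otimes 1$ is a nonzero element of the free Lie algebra $L_K(X)$. I would then use Labute's result that a single nonzero homogeneous relator in a free Lie algebra over a field is inert. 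Concretely, the graded complex
\[0\to U_K(-k)\xrightarrow{\;\partial_2\;}U_K^{|X|}(-1)\xrightarrow{\;\partial_1\;}U_K\to K\to 0,\]
with $\partial_2$ given by the Fox-type decomposition $\rho=\sum_x \rho_x x$ and $\partial_1$ right multiplication by the generators, is exact. Equivalently, $U_K$ has Hilbert series $(1-|X|t+t^k)^{-1}$. Because this series is independent of $K$, each graded piece $U_i$ is a free $\Z$-module of the predicted rank, and likewise $\mathfrak{L}_i$ is free, since $\dim_K(\mathfrak{L}\otimes K)_i$ is again independent of $K$ by PBW. Torsion-freeness of $\mathfrak{L}$ follows, and PBW over $\Z$ for a Lie ring that is free as a module then gives an embedding $\mathfrak{L}\hookrightarrow U$.

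The group side compares the filtered Lyndon resolution with the complex above. Since $r$ is not a proper power in $F$ (its image mod $\gamma_{k+1}$ is not), Lyndon's identity theorem gives the free resolution
\[0\to\Z G\to\Z G^{|X|}\to\Z G\to\Z\to 0,\]
with maps given by the Fox derivatives of $r$ and by $x-1$. I would filter it by powers of $\omega$, shifted by $k$ and $1$ respectively; note that the Fox derivatives of $r\in\gamma_k$ lie in $\omega^{k-1}$. The associated graded complex is then the integral version of the exact complex above. Exactness of the associated graded complex implies strictness of the filtered maps, and from this one gets $\gr_\bullet(\Z G)\cong U$. Under this identification $\theta\circ\pi$ becomes the PBW embedding $\mathfrak{L}\hookrightarrow U$, which is injective, and this concludes the argument.

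I expect the main obstacle to be the inertness/exactness step. That means establishing exactness of the graded complex over every prime field for a relator that is merely nonzero, and then transferring strictness from the graded complex back to the filtered one over $\Z$. Over $\Z$, the augmentation filtration and the dimension subgroups are not automatically well behaved, so I would first try to prove everything over $\Q$ and over each $\F_p$ separately. I would then recover the integral statement from the constancy of ranks, using that the $U_i$ are finitely generated.
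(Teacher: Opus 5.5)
The paper gives no proof of this statement. It quotes it from Labute and uses it as a black box in the section on hydra groups, so your argument has to stand on its own.

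Most of it does. The epimorphism $\pi$ and the reduction to injectivity of $\theta\circ\pi$ are fine, as is Lyndon's resolution. So is the freeness of $U_i$ and $\mathfrak L_i$ from the $K$-independence of Hilbert series. That step rests on inertness of $\mathfrak r\otimes 1$ over $\Q$ and every $\F_p$, which you import from Labute's Lie-algebra theorem; this is legitimate, but it is the core of the result.

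\textbf{The gap is the comparison step.} The associated graded of the filtered Lyndon resolution is
\[0\to\gr_\bullet(\Z G)(-k)\to\gr_\bullet(\Z G)^{|X|}(-1)\to\gr_\bullet(\Z G)\to\Z\to 0.\]
This is your exact $U$-complex base-changed along the natural surjection $\phi\colon U\to\gr_\bullet(\Z G)$, which comes from $\gr_\bullet(\Z F)=\Z\langle X\rangle$ and $\rho\mapsto 0$. It coincides with the exact complex only if $\phi$ is already injective, which is what you are trying to prove. Moreover, Lyndon only gives exactness of the filtered complex, and that does not pass to the associated graded without strictness, which is again the unknown. So the chain ``the associated graded complex is the integral version of the exact complex, hence the maps are strict, hence $\gr_\bullet(\Z G)\cong U$'' is circular.

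\textbf{How to close it.} You need an argument forcing $\phi$ to be injective, and a dimension count over a field $K$ suffices. Set $A=KG$, $a_n=\dim_K\omega^n/\omega^{n+1}$, $d=|X|$ and $u(t)=(1-dt+t^k)^{-1}$. Write $a(t)=u(t)-c(t)$; here $c\ge 0$ coefficientwise because $U_K\to\gr_\bullet(KG)$ is onto, and $c_0=0$.

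Since $\omega^n=\sum_x\omega^{n-1}(x-1)$, the map $\partial_1$ sends $(\omega^{n-1})^d$ onto $\omega^n$, so it is strict. With the induced filtration $F^n=\ker\partial_1\cap(\omega^{n-1})^d$ this gives $\dim\gr_n(\ker\partial_1)=d\,a_{n-1}-a_n$.

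Next, $\partial_2\colon A\to\ker\partial_1$ is bijective and $\partial_2(\omega^{n-k})\subseteq F^n$, because the Fox derivatives lie in $\omega^{k-1}$. Hence $\dim\ker\partial_1/F^n\le\dim A/\omega^{n-k}$.

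Combining the two facts, every partial sum $\sum_{j=1}^m\bigl[(1-dt+t^k)c(t)\bigr]_j$ is $\le 0$. Take $m$ to be the degree of the first nonzero coefficient of $c$: the sum then equals that coefficient, which is positive. So $c=0$ and $U_K\cong\gr_\bullet(KG)$.

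Now take $K=\Q$. The composite $U\otimes\Q\to\gr_\bullet(\Z G)\otimes\Q\to\gr_\bullet(\Q G)$ is an isomorphism. Hence $\ker\phi$ is torsion inside the free abelian group $U$, so it is zero. With $\phi$ an isomorphism, $\theta\circ\pi$ is the PBW embedding $\mathfrak L\hookrightarrow U$, and your conclusion follows.
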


Now, let $H$ be as in (\ref{eq:hydra}). We are in the situation of Labute's Theorem with $X=\{s,b\}$ and $r=[s,\buildrel k\over\ldots,s,b]$.
As Baumslag and Mikhailov argue, $r\in\gamma_{k+1}(F)$ and it is a basis element of the free abelian group $\gamma_{k+1}(F)/\gamma_{k+2}(F)$ so it is not a proper power and Labute's Theorem applies. In particular, 
$$\gr_\bullet(H)=\langle \mathfrak{s},\mathfrak{b}\mid[\mathfrak{s},\buildrel k\over\ldots,\mathfrak{s},\mathfrak{b}]\rangle$$
and $\gr_\bullet(H)$ is torsion free as abelian group. Therefore all the quotients $\gamma_i(H)/\gamma_{i+1}(H)$ are torsion free thus also are all the groups $H/\gamma_i(H)$.

Groups with this last property are called $\gamma$-free in \cite{MRW}. In that paper, the authors show that for these groups, for any prime $p$, the $p$-Zassenhauss Lie algebra $\gr^{[p]}(H)$ can be obtained applying the {\sl restrictification} functor to the Magnus Lie algebra $\gr_\bullet(H)$. Therefore
$$\gr^{[p]}_\bullet(H)=\langle \mathfrak{s},\mathfrak{b}\mid[\mathfrak{s},\buildrel k\over\ldots,\mathfrak{s},\mathfrak{b}]\rangle_{[p]}$$
where the subscript $[p]$ means that we consider this as a presentation in the category of $p$-restricted Lie algebras.

Using this fact we get:


\begin{theorem}\label{teo:Hydra}
    For any pro-$p$ FC Artin group $A_\Gamma$ and each prime $p$, there is a group $G$ which is residually torsion free nilpotent and such that
$$\gr_\bullet^{[p]}(A_\Gamma)=\gr_\bullet^{[p]}(G).$$
\end{theorem}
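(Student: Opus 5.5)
The plan is to replace each dihedral piece of $A_{\Gamma_p}$ by a hydra group with the same $p$-Zassenhaus algebra, and then glue these pieces together exactly as $A_{\Gamma_p}$ is glued.

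\emph{Reduction.} By the proof of Theorem~\ref{teo:prestricted}, $\gr^{[p]}_\bullet(A_\Gamma)=\gr^{[p]}_\bullet(A_{\Gamma_p})$. So we may assume $\Gamma=\Gamma_p$, i.e.\ $\Gamma$ is even, of type FC, and every label has the form $2p^t$.

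\emph{The hydra relator.} In a $p$-restricted Lie algebra one has $\operatorname{ad}(x^{[p]})=(\operatorname{ad}x)^p$, and hence
$$[\mathfrak{s}^{[p]^t},\mathfrak{b}]=[\mathfrak{s},\buildrel p^t\over\ldots,\mathfrak{s},\mathfrak{b}].$$
Put $\mathfrak{s}=\mathfrak{a}+\mathfrak{b}$. Proposition~\ref{prop:pZdihedral}, with the relation written in the form $[(\mathfrak{a}+\mathfrak{b})^{[p]^t},\mathfrak{b}]=0$ of Theorem~\ref{teo:prestricted}, shows that the even dihedral group with label $2p^t$ has
$$\gr^{[p]}_\bullet=\langle \mathfrak{s},\mathfrak{b}\mid[\mathfrak{s},\buildrel p^t\over\ldots,\mathfrak{s},\mathfrak{b}]\rangle_{[p]}.$$
By the discussion preceding the theorem (Labute, together with \cite{MRW}), this is exactly $\gr^{[p]}_\bullet(H)$ for the hydra group $H=\langle s,b\mid[s,\buildrel p^t\over\ldots,s,b]\rangle$. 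Note that $\{s,b\}$ is a basis of the free group, so the hydra presentation is legitimate.

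\emph{The group $G$.} Let $G$ be generated by $V(\Gamma)$, with the following relations:
\begin{itemize}
\item[(i)] $[u,v]=1$ for each edge $\{u,v\}$ with label $2$;
\item[(ii)] $[ab,\buildrel p^t\over\ldots,ab,b]=1$ for each edge $\{a,b\}$ with label $2p^t$, $t\geq1$.
\end{itemize}
For every full subgraph $\Delta$, killing the vertices outside $\Delta$ is well defined on $G$. Indeed, a relator of type (ii) becomes either trivial or $[b,\ldots,b,b]=1$. Hence each $G_\Delta$ is a retract of $G$, and in particular it is strictly $p$-embedded.

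We then run the same induction as in the proof of Theorem~\ref{teo:prestricted}.
\begin{itemize}
\item If $\Gamma$ is complete, the FC condition makes $G$ a direct product of copies of $\Z$ and hydra groups. Its $p$-Zassenhaus algebra is the corresponding direct sum, which by the dihedral computation above matches that of $A_\Gamma$.
\item Otherwise $G=G_{\Gamma_1}\ast_{G_\Delta}G_{\Gamma_2}$ with $\Delta=\Gamma_1\cap\Gamma_2$, amalgamated along retracts. Theorem~\ref{teo:Giorgio} then expresses both $\gr^{[p]}_\bullet(G)$ and $\gr^{[p]}_\bullet(A_\Gamma)$ as the same amalgamated free product of restricted Lie algebras. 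The inductive hypothesis identifies the factors compatibly, since the generators correspond.
\end{itemize}

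\emph{Residual torsion-free nilpotence.} Hydra groups are residually torsion-free nilpotent by \cite{BM}, and direct products preserve this property. The main obstacle is passing the property through amalgams along retracts.

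My first attempt is the analogue of the argument of \cite{BS}, using Theorem~\ref{teo:Giorgio} with $p=0$. Since the factors are retracts, that theorem gives $\gr^{[0]}_\bullet(G)$ as an amalgam of the factors' algebras. I would then argue inductively that $G$ is $\gamma$-free, i.e.\ $\gr_\bullet$ is torsion-free, which is the case for the hydra factors by Labute's theorem. After that it remains to show $\bigcap_i\gamma_i(G)=1$.

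For that step I would use the map $G\to G_{\Gamma_1}\times G_{\Gamma_2}$ given by the two retractions. Its kernel acts freely on the Bass--Serre tree and is therefore free. One then has to separate the elements of this free kernel by torsion-free nilpotent quotients, which I expect requires a Magnus-type embedding (over $\Q$) constructed as in Theorem~\ref{teo:Giorgiofiltered}. If this direct approach stalls, a fallback is to show $\bigcap_i\gamma_i^{[p]}(G)=1$ for every prime $p$, via \cite{BS}, and combine this with $\gamma$-freeness. That combination yields residual torsion-free nilpotence, because in a $\gamma$-free group $\gamma_i^{[0]}=\gamma_i$.
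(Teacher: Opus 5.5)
Your construction of $G$, the reduction to $\Gamma=\Gamma_p$, the identification of each dihedral piece with a hydra group (via $\operatorname{ad}(\mathfrak{s}^{[p]^t})=(\operatorname{ad}\mathfrak{s})^{p^t}$, Labute and \cite{MRW}), the retractions, and the induction through Theorem~\ref{teo:Giorgio} giving $\gr^{[p]}_\bullet(G)\cong\gr^{[p]}_\bullet(A_\Gamma)$ all agree with the paper's proof. The genuine gap is the residual torsion-free nilpotence of $G$ when $\Gamma_p$ is not complete. You correctly identify passing this property through amalgams along retracts as the crux, but you never establish it. The paper does not prove this by hand either. It invokes \cite[Theorem 6.2]{LMPW}, which gives exactly the needed preservation of residual torsion-free nilpotence under amalgamated products along retracts. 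Without that result, or a proof of it, your argument only yields the Lie algebra half of the statement.

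The route you sketch also has a concrete hole. Theorem~\ref{teo:Giorgio} at $p=0$ is about $\gr^{[0]}_\bullet(G)=\gr_\bullet(G)\otimes_\Z\Q$, which cannot see torsion in $\gamma_i(G)/\gamma_{i+1}(G)$. For example, for an even dihedral Artin group with label $2m$, $m>1$, $\gr_2$ is a nonzero finite group that vanishes after tensoring with $\Q$. So the theorem gives you no inductive control of $\gamma$-freeness.

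Your fallback would finish the argument if $\gamma$-freeness were available, and then a single prime suffices. Indeed, \cite{BS} makes $G$ residually-$p$, since the hydra factors are finitely generated and residually torsion-free nilpotent, hence residually-$p$. Then $\bigcap_i\gamma_i(G)\le\bigcap_i\gamma_i^{[p]}(G)=1$, and $\gamma_i^{[0]}(G)=\gamma_i(G)$ in a $\gamma$-free group. But $\gamma$-freeness of the amalgam is precisely what is unproved.

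The alternative, separating elements of the free kernel of $G\to G_{\Gamma_1}\times G_{\Gamma_2}$ by torsion-free nilpotent quotients, is only announced. A characteristic-zero analogue of Theorem~\ref{teo:Giorgiofiltered} would identify the completions $\Q[[G]]$ and $\hat U_G$. It would not by itself show that $G\to\Q[[G]]$ is injective, and that injectivity is exactly the residual property you need.
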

\begin{proof} Consider the graph $\Gamma_p$  and fix a total order in the set of vertices $V(\Gamma_p)$. Let $G$ be the group given by the following presentation. As generating set we have $V(\Gamma_p)$. As relators,
$$[ab,\buildrel{p^t}\over\ldots,ab,b]=1\text{ for each  $\{a,b\}\in E(\Gamma_p)$ with label $2p^t$ and $a<b$}.$$

Assume first that $\Gamma_p$ is a single edge with label $2p^t$ and vertices $a,b$. Put $s=ab$. Then $G$ is precisely the Hydra group of (\ref{eq:hydra}) and the result follows by the previous considerations. If $\Gamma_p$ is a complete graph,
the hypothesis that it is of type FC implies that $G$ is a direct product of groups which are either infinite cyclic or Hydra groups as (\ref{eq:hydra}) so the result follows again. Finally, if $\Gamma_p$ is not complete, then $G$ can be decomposed as a free amalgamated product of two groups of the same type but with a smaller number of vertices amalgamated along a retract. Using induction, we may assume that the result is true for the three components of the amalgamated product. And we deduce that $G$ is residually torsion free nilpotent from Theorem 6.2 in \cite{LMPW} and that $\gr_\bullet^{[p]}(G)=\gr_\bullet^{[p]}(A_\Gamma)$ from Theorem \ref{teo:prestricted}.
\end{proof}

\end{document}